\renewcommand{\thefootnote}{}
\newtheorem{twr}{Twierdzenie}
\newtheorem{thr}[twr]{Theorem}
\newtheorem{prp}[twr]{Proposition}
\newtheorem{lm}[twr]{Lemma}
\newtheorem{crl}[twr]{Corollary}
\renewcommand{\thefootnote}{}
\newcommand{\da}{\partial}
\newcommand{\db}{\bar\partial}
\newcommand{\dc}{i{\partial}\bar\partial}
\newcommand{\ta}{\theta}
\newcommand{\tb}{\bar\theta}
\newcommand{\tc}{\theta\bar{\theta}}
\newcommand{\psh}{ \mathcal{PSH}}
\newcommand{\ma}{{(i\partial\bar\partial u)^2}}
\DeclareMathOperator\R{{ \mathbb R}}
\DeclareMathOperator\N{{ \mathbb N}}
\DeclareMathOperator\Q{{ \mathbb Q}}
\newcommand{\cn}{\mathbb{C}^n}
\title[capacity]{On the relative capacity on almost complex surface  }
\author[S. Pli\'{s}]{Szymon Pli\'{s}}
\address{  Institute of Mathematics, Cracow University of Technology, Warszawska 24, 31-155
    Krak\'{o}w, Poland
}
\address{  Institute of Mathematics Polish Academy of Sciences ul. \'Sniadeckich 8, 00-956 Warszawa, Poland }
\email{splis@pk.edu.pl}
\subjclass[2010]{ 32W20,32Q60, 32U05, 32U40, 35J60}
\keywords{  Monge-Amp\`ere operator, almost complex manifold, plurisubharmonic function}
\begin{document}\thispagestyle{empty} \footnotetext{The  author was partially supported by the NCN grant 2013/08/A/ST1/00312.}
\renewcommand{\thefootnote}{\arabic{footnote}} 

\begin{abstract}

We built the pluripotential theory on almost complex surfaces. Using Bedford-Taylor type relative capacities we prove among others that J-holomorphic curves as well as negligible sets are pluripolar and Josefson's type theorem on almost Stein surfaces.  
\end{abstract}

\maketitle
\section{Introduction}
Let $(M,J)$ be an almost complex surface (an almost complex manifold of  real dimension 4). For  functions $u,v\in W^{1,2}_{loc}(\Omega)$, where $\Omega\subset M$ is a domain, we can define, see \cite{p2}, a wedge product
\begin{equation*}\label{wedgeproduct}
\dc u\wedge\dc v\end{equation*}
as a $(2,2)$ current. 
If $u,v$ are $\mathcal{C}^2$ functions then it is the standard wedge product of continuous forms. If $u,v\in W^{1,2}_{loc}(\Omega)$ are plurisubharmonic then it is a regular Borel measure, see \cite{p2,p3} and $(\dc u)^2$ is called the Monge-Amp\`ere operator.

The goal of this article is to study plurisubharmonic functions, the Monge-Amp\`ere operator and the relative capacity on almost complex surfaces in the spirit of Bedford-Taylor \cite{b-t2} which (together with \cite{b-t1}) created pluripotential theory in $\cn$. 

In his famous paper \cite{g} M. Gromov shows a deep connection between analysis on almost complex manifolds and symplectic geometry. For the "generic" almost complex structure $J$ there is no non-constant $J$-holomorphic functions. On the other hand there is always  plenty of $J$-holomorphic discs which constitute a powerful tool for symplectic geometry. Plurisubharmonic functions are another objects
which locally always exist    and can be viewed as dual to $J$-holomorphic discs.  They  are very important  for symplectic applications too but it seems that they are much less understood. In the present paper we develop  the theory of this class of functions in the case of surfaces. 

 The main difference between $\cn$ and an almost complex manifold (with the non-integrable almost complex structure) is the fact that for a plurisubharmonic function $u$, the positive current $\dc u$ is not necessary closed. Thus the pluripotential theory  on almost complex manifold is in some sense similar to pluripotential theory on hermitian manifold where the current $\dc u+\omega$ is not closed too (for this subject see \cite{k2} and references there).
However, the theory in the non-integrable case is much more difficult. This is, among others, because in the case of hermitian manifolds the non-closed part of $\dc u+\omega$ is just the hermitian form $\omega$ which is smooth and does not depend on $u$ whereas in our situation  the non-closed part of $\dc u$   is only in $L_{loc}^2$ (at least for bounded $u$) and  strongly depends on $u$.

In \cite{r} J.-P. Rosay proved that $J$-holomorphic (smooth) curves are pluripolar (for more general result of any submanifold see \cite{e}, for application of the pluripolarity to theory of $J$-holomorphic discs see \cite{i-r}). We prove that any (not necessary smooth) $J$-holomorphic curve on an almost complex surface is pluripolar (Corollary \ref{curves}). The problem of the pluripolarity of singular submanifolds in higher dimensional manifolds is still open.

The most important result in the paper is the quasi-continuity of plurisubharmonic functions (Proposition \ref{Quasi-continuity} and Theorem \ref{Quasi-continuity2}) which allow us in particular to prove that  negligible sets are pluripolar (Theorem \ref{negligible})  and obtain further results about the pluripolarity (Corollary \ref{plpbycap2} and Theorem \ref{globalplp2}).
 
All results proved in the paper,  in the case of $\cn$, are proved in the classical paper \cite{b-t2}. However the author have learned the subject from \cite{b1} and \cite{K}. In particular, when it is possible, we follow (as closely as possible) Ko\l odziej's lecture notes.

\section{Preliminaries}
\subsection{Almost complex manifolds and plurisubharmonic functions}

We say that $(M,J)$ is an almost complex manifold if $M$ is a manifold and $J$ is an $\mathcal{C}^\infty$ smooth endomorphism of the tangent bundle $TM$, such that $J^2=-\rm{id}$. The real dimension of $M$ is even in that case. We will  denote by $n$ the complex dimension of $M$: $n=\dim_\mathbb{C}M=\frac{1}{2}\dim_\mathbb{R}M$. Starting from subsection \ref{Monge-Amp\`ere operator} we will assume that $n=2$ that is $M$ is a surface.

All definitions below are exactly the same as in the case of complex manifolds. 

As on complex manifolds we can define here $(p,q)$-forms and more generally $(p,q)$-currents. We have the decomposition of the exterior differential:
\begin{equation*}
    d=\da+\db-\ta-\tb,
\end{equation*}
where operators  $\da,\db,-\ta$ and $-\tb$ are respectively $(1,0)$, $(0,1)$, $(2,-1)$ and $(-1,2)$ parts of $d$. On the level of functions we have 
\begin{equation*}
    d=\da+\db.
\end{equation*}

   We always assume that we have fixed an almost hermiatian form $\Omega$ (a positive $(1,1)$-form) which gives us Rimmanian metric on $M$,  Sobolev   norms for functions and allows us to define $cap_\omega$. However it does not matter for us which $\omega$  we choose hear. In dimension $2$, at least locally, we can choose $\omega$ to be closed (which is not always  possible in higher dimension, for both facts see \cite{l}) but we will not use this fact.

Through the whole paper  $\Omega$ will be a relatively compact domain in $M$ and $\mathbb{D}=\{z\in\mathbb{C}\colon |z|<1\}$. In $\mathbb{C}$ we have the standard almost complex structure $J_{st}$. We say that a function $\lambda\colon\mathbb{D}\to D$ is $J$-holomorphic if $d\lambda J_{st}=Jd\lambda$. We say that a function $u\colon\Omega\to[-\infty,+\infty)$ is plurisubharmonic iff 
\begin{enumerate}
 \item $u\not\equiv -\infty$
    \item  $u$ is upper-semicontinuous and
    \item $u\circ\lambda$ is subharmonic function or $u\circ\lambda\equiv -\infty$ for any $J$-holomorphic function $\lambda\colon\mathbb{D}\to \Omega$.
\end{enumerate}

If $u$ is plurisubharmonic then it is locally integrable and $\dc u\geq 0$, see \cite{p}. The converse was proved by R. Harvey and B. Lawson in \cite{h-l}. Namely they proved that, if $u\in L^1_{loc}$ and $\dc u\geq 0$ then a function $\tilde u$, given by
\begin{equation*}
   \tilde u(z)= {\rm ess} \,\limsup_{w\to z\;\;\;\;}u(w)
\end{equation*}
is a plurisubharmonic function which is equal almost everywhere to the function $u$.

We say that a function $u$ on $\Omega$ is strictly plurisubharmonic iff for any  $\varphi\in\mathcal{C}^\infty_0(\Omega)$ there is $\varepsilon_0>0$ such that the function $u+\varepsilon\varphi$ is plurisubharmonic for $\varepsilon_0>\varepsilon>0$.

We say that a domain $\Omega\Subset M$ is strictly pseudoconvex (of class $\mathcal{C}^{\infty}$),  if there is a strictly plurisubharmonic function $\rho$ of class $\mathcal{C}^{\infty}$  in a neighborhood of $\bar\Omega$,  such that $\Omega=\{\rho<0\}$ and $\triangledown\rho\neq0$ on $\partial\Omega$. 
We say that $M$ is almost Stein if there is exhausting smooth strictly plurisubharmonic function on $M$. Of course any point on an almost complex manifold has a strictly pseudoconvex neighbourhood and  strictly pseudoconvex domains are almost Stein.

In \cite{b-t2} capacities are studied in hyperconvex domains. Here for simplicity we will assume that $\Omega $ is strictly pseudoconvex. However all results holds,  with the same proofs, in accordingly defined hyperconvex domains on almost complex manifolds. 

R. Harvey, B. Lawson and the author proved in \cite{h-l-p} the following result about regularization of plurisubarmonic functions.

\begin{thr}\label{regularisation}
Suppose  $M$ is an almost Stein
 and let $u $ be a plurisubharmonic function on $M$.
Then there exists a decreasing sequence $u_j $ of smooth strictly 
plurisubharmonic functions on $M$  such that $u_j\downarrow u(x)$.
\end{thr}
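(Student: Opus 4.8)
The plan is to reduce strict plurisubharmonicity to ordinary plurisubharmonicity and then to smooth in two stages. Since $M$ is almost Stein it carries a nonnegative smooth strictly plurisubharmonic exhaustion $\rho$. If we can produce a decreasing sequence $v_j$ of smooth (not necessarily strictly) plurisubharmonic functions with $v_j\downarrow u$ pointwise, then $u_j:=v_j+\delta_j\rho$ with $\delta_j\downarrow0$ chosen to decrease fast enough is a decreasing sequence of smooth strictly plurisubharmonic functions with $u_j\downarrow u$ pointwise (monotonicity holds because $v_{j+1}-v_j\leq 0\leq(\delta_j-\delta_{j+1})\rho$). So it suffices to construct the $v_j$, and I would do this by first reaching $\mathcal{C}^{1,1}$ regularity and only then $\mathcal{C}^\infty$.

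The heart of the matter is local smoothing inside a coordinate ball $B$ in which $J(0)=J_{st}$ and $J$, with its first derivatives, is uniformly bounded. The obstruction, absent in $\C^n$, is twofold: the operator $\dc$ has variable principal coefficients and a first-order part that both depend on $J$, so mollification neither commutes with it nor preserves plurisubharmonicity; and, worse, the naive mollification $u\ast\rho_\varepsilon$ need not be monotone in $\varepsilon$, since the sub-mean-value property of $u$ is attached to $J$-holomorphic discs rather than to Euclidean balls. To overcome both at once I would use the sup-convolution
\begin{equation*}
u^\lambda(x)=\sup_{y\in B}\Big(u(y)-\tfrac{1}{2\lambda}|x-y|^2\Big),
\end{equation*}
taken in the chart. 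It has three virtues: $u^\lambda\geq u$; it decreases to $u$ as $\lambda\downarrow0$ by upper semicontinuity, and it is automatically finite since it ignores the $-\infty$ set of $u$; and it is semiconvex, hence $\mathcal{C}^{1,1}$ with a two-sided bound on its distributional Hessian. Because the optimal $y$ lies within $O(\sqrt\lambda)$ of $x$ and the coefficients of $\dc$ vary by $O(\sqrt\lambda)$ over that scale, $u^\lambda$ should be plurisubharmonic up to a controlled defect $\dc u^\lambda\geq -C(\lambda)\,\omega$ with $C(\lambda)\to0$, after which $u^\lambda+C(\lambda)\rho$ is genuinely plurisubharmonic. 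This defect estimate is the step I expect to be the main obstacle, since it must be run against the merely $L^2_{loc}$, $u$-dependent non-closed part of $\dc u$ described in the introduction; it is exactly here that I would first reduce to bounded $u$ via the truncations $\max(u,-m)$ and the regularized maximum, and then exploit the uniform control on $J$ and its derivatives on $B$.

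Once $u^\lambda+C(\lambda)\rho$ is a $\mathcal{C}^{1,1}$ strictly plurisubharmonic function lying above $u$, the second stage is routine: its Hessian is now bounded, so mollifying it produces a commutator error with $\dc$ that is genuinely $O(\varepsilon)$ and can be absorbed by a further correction $C\varepsilon\rho$, yielding a smooth strictly plurisubharmonic function. Finally I would globalize: cover each sublevel set $\{\rho<c\}$ by finitely many such charts, glue the local smooth plurisubharmonic pieces with the regularized maximum $\max_\eta$ (which preserves smoothness and plurisubharmonicity and coincides with the honest maximum off the overlaps), exhaust $M$ by these sublevel sets, and arrange the parameters $\lambda,\varepsilon,\delta,\eta$ by a diagonal choice so that the resulting functions form a single decreasing sequence converging pointwise to $u$ on all of $M$.
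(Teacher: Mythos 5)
First, a point of reference: the paper does not prove Theorem \ref{regularisation} at all --- it is quoted from \cite{h-l-p}, with the surface case attributed to \cite{p3} --- so your proposal has to be measured against that work, whose entire technical content is essentially the single step you yourself flag as ``the main obstacle.'' Your outer scaffolding is sound: the reduction of strictness to a correction $\delta_j\rho$, the final mollification of a $\mathcal{C}^{1,1}$ strictly plurisubharmonic function (there the commutator of $i\partial\bar\partial$ with convolution really is $O(\varepsilon)$, because the Hessian is in $L^\infty$ and the error is bounded by the coefficient variation times $|D^2u|$ plus first-order terms), and the Richberg-type gluing with the regularized maximum. The gap is the claimed defect estimate $i\partial\bar\partial u^\lambda\geq -C(\lambda)\omega$ with $C(\lambda)\to0$: this does not follow from ``the optimal $y$ lies within $O(\sqrt{\lambda})$ of $x$ and the coefficients vary by $O(\sqrt{\lambda})$ over that scale,'' and in fact the estimate obtained that way diverges.

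The reason is that the defect of a sup-convolution couples to the second derivatives of $u^\lambda$, which are bounded below by $-\lambda^{-1}\mathrm{Id}$ and admit no upper bound whatsoever. Concretely, at a point $x$ where $u^\lambda$ is twice differentiable with gradient $p$ and Hessian $A$, the plurisubharmonicity of $u$ transfers (by the standard viscosity jet transfer) into positivity of the $J(y_x)$-Hermitian part of $A$ (plus first-order terms) at the touching point $y_x=x+\lambda p$; what you need is positivity, up to $-C(\lambda)$, of the $J(x)$-Hermitian part of the \emph{same} matrix $A$. The difference of these two forms is controlled only by $|J(x)-J(y_x)|\cdot|A|$. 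For the negative part of $A$ this is of size $|dJ|\cdot\delta\cdot\lambda^{-1}\sim\lambda^{-1/2}$ (with $\delta=|x-y_x|\sim\sqrt{\lambda}$), which blows up as $\lambda\to0$; for the positive part it is $|dJ|\cdot\delta\cdot|A^{+}|$, which is uncontrolled since sup-convolutions have no upper Hessian bound; and the first-order terms (present because $i\partial\bar\partial$ on an almost complex manifold involves $dJ$ against $du^\lambda$, with $|p|\sim\delta/\lambda$) contribute an error of order $\mathrm{osc}(u)$, bounded but not small. Your proposed remedies --- truncating $u$ to be bounded and using uniform bounds on $J$ and its derivatives --- change none of this: they only reaffirm $\delta\leq C\sqrt{\lambda}$, while the divergence comes from the factor $\lambda^{-1}$. (Note also that the difficulty is misattributed in your write-up: it is not the non-closed $L^2$ part of $i\partial\bar\partial u$, but the failure of Euclidean translations to preserve $J$-plurisubharmonicity, an error which is second order in $u$.) This is precisely why regularization on almost complex manifolds was a genuine open problem; the actual proofs in \cite{p3} and \cite{h-l-p} require a substantially different mechanism to produce the local decreasing approximants, and once that local result is granted, your remaining stages are indeed the standard route.
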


In the case of surfaces the existence of the local regularization was proved earlier in \cite{p3}.

\subsection{Dirichlet problem for the Monge-Amp\`ere equation }
Let $\Omega\subset M$ be strictly pseudoconvex domain. The followinng Theorem will be useful for us.
\begin{thr}\label{MAtheorem}
There is a unique  solution $u$ of the Dirichlet problem:
 \begin{equation}\label{DP}
\left\{
\begin{array}{l}
    u\in\mathcal{PSH}(\Omega)\cap \mathcal{C}^\infty(\bar\Omega)\\ 
    (i\partial\bar\partial u)^n=dV \;\mbox{ in }\;\Omega\\
    u=\varphi\;\mbox{ on }\;\partial\Omega
\end{array}
\right.\;,
\end{equation}
for any $\varphi\in\mathcal{C}^\infty(\bar\Omega)$ and $dV$ is the volume form on a neighbourhood of $\bar\Omega$. 
\end{thr}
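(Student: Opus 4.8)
The plan is to obtain uniqueness from the maximum principle and existence by the continuity method, the latter resting on uniform a priori estimates in H\"older spaces. Note first that, as $dV$ is a positive volume form, any solution $u$ has $(\dc u)^2>0$ and hence positive definite complex Hessian, so every solution is automatically strictly plurisubharmonic and the operator is (nondegenerate) elliptic along solutions. For uniqueness, let $u_1,u_2$ be two solutions; applying the fundamental theorem of calculus to the complex Hessians along the segment joining them, one writes $(\dc u_1)^2-(\dc u_2)^2=L(u_1-u_2)$, where $L$ is a linear second-order operator with no zeroth-order term whose principal part is the positive definite cofactor matrix of an intermediate Hessian, together with lower-order terms coming from the torsion of $J$. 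Since $u_1-u_2=0$ on $\po$ and $L(u_1-u_2)=0$ in $\Omega$, the maximum principle gives $u_1\equiv u_2$.

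For existence I use the continuity method. Fix a smooth extension $\tilde\varphi\in\gl(\bar\Omega)$ of $\varphi$; for $A$ large the function $A\rho+\tilde\varphi$ is smooth, strictly plurisubharmonic, and equal to $\varphi$ on $\po$ (as $\rho=0$ there), so $g:=(\dc(A\rho+\tilde\varphi))^2$ is a smooth positive density. For $t\in[0,1]$ consider
\begin{equation*}
(\dc u_t)^2=(1-t)\,g+t\,dV\;\text{ in }\Omega,\qquad u_t=\varphi\;\text{ on }\po.
\end{equation*}
Then $t=0$ is solved by $A\rho+\tilde\varphi$ and $t=1$ is the assertion. Let $S\subset[0,1]$ be the set of $t$ admitting a smooth solution; it suffices to show $S$ is nonempty, open, and closed. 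Openness is the implicit function theorem: the linearization of $u\mapsto(\dc u)^2$ at a strictly plurisubharmonic $u$ is the uniformly elliptic operator $L$ above, and with zero Dirichlet data it is an isomorphism between the relevant H\"older spaces, so solutions persist for $t$ near any $t_0\in S$.

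Closedness is the heart of the argument and requires a priori $\mathcal{C}^{2,\alpha}(\bar\Omega)$ bounds uniform in $t$, obtained through the standard hierarchy. The $\mathcal{C}^0$ bound is immediate: from above by the maximum principle, $u_t\le\max_{\po}\varphi$, and from below by the uniform subsolution $A\rho+\tilde\varphi$, since for $A$ large $(\dc(A\rho+\tilde\varphi))^2\ge(1-t)g+t\,dV$ and comparison gives $A\rho+\tilde\varphi\le u_t$. Gradient bounds follow interiorly and, at the boundary, from barriers built out of $\rho$ and the strict pseudoconvexity of $\Omega$; the decisive second-order estimate uses the concavity of $\log\det$ on positive Hermitian matrices interiorly and barriers from $\rho$ at the boundary. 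Once $\|u_t\|_{\mathcal{C}^2}$ is controlled the complex Hessian is uniformly bounded below, so the equation is uniformly elliptic with a concave operator, and the Evans--Krylov theorem together with its boundary version upgrades this to a uniform $\mathcal{C}^{2,\alpha}$ bound; Schauder bootstrapping then yields uniform $\mathcal{C}^\infty$ bounds, and $S$ is closed. The main obstacle is this second-order estimate in the non-integrable setting: since $\dc u$ is not closed, the integrations by parts that organize such estimates in $\cn$ produce extra terms involving the torsion $\ta,\tb$ of $J$ (and, through it, first derivatives of $u$), and in local non-holomorphic coordinates the operator is more than the determinant of the complex Hessian; controlling these torsion contributions uniformly in $t$, above all in the boundary $\mathcal{C}^2$ estimate, is precisely where the almost complex case departs from the classical one.
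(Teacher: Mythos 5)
Your proposal reproduces the correct skeleton of the continuity-method proof, and it is indeed the strategy of the references on which this theorem rests; but note that the paper itself does not prove Theorem \ref{MAtheorem} at all. It quotes it from \cite{p1}, with the explicit caveat that the proof there has a gap \emph{precisely in the interior second-order estimate}, a gap closed only by the later work of Chu, Tosatti and Weinkove \cite{c-t-w}. That is exactly the step your proposal names but does not carry out. Everything you write before it (uniqueness via the maximum principle applied to the linearized operator, the continuity family starting from $A\rho+\tilde\varphi$, the $\mathcal{C}^0$ bound by sub- and supersolutions, openness via Schauder theory) is routine, and everything after it (Evans--Krylov, bootstrap, closedness) is conditional on it; so the entire substance of the theorem is concentrated in the one estimate you only gesture at.

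This is a genuine gap, not a harmless appeal to standard arguments. In local (non-holomorphic) coordinates the entries of $i\partial\bar\partial u$ are not the complex Hessian of $u$: they contain first derivatives of $u$ with coefficients coming from the non-integrability of $J$, so the equation has the form $\det\bigl(u_{j\bar k}+b_{j\bar k}(x,Du)\bigr)=f$. When one differentiates the equation twice to run the usual concavity computation, the torsion terms generate third derivatives of $u$ multiplied by quantities that the operator does not control, and the classical Yau / Caffarelli--Kohn--Nirenberg--Spruck scheme breaks down; this is why the estimate resisted for years and why \cite{c-t-w} is essentially an entire paper devoted to it (their argument works with the largest eigenvalue of the \emph{real} Hessian rather than the complex one, with a delicate choice of auxiliary function to absorb the torsion contributions). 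As the paper's footnote indicates, a corrected version of the computation in \cite{p1} only yields the theorem for small enough domains $\Omega$. So your write-up becomes acceptable only if you either cite \cite{c-t-w} for the second-order estimate --- at which point it is a faithful outline of the known proof, matching how the paper treats the theorem as a quoted result --- or supply that estimate yourself, which is far beyond what the proposal contains.
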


The proof of this theorem in \cite{p1} has a gap (in the part about the interior second order estimate). It is corrected in a recent work of J. Chu, V. Tosatti and B. Weinkove \cite{c-t-w}.\footnote{However if we do the calculations from \cite{p1} in corrected way, we can relatively easily get Theorem \ref{MAtheorem} for small enough domains $\Omega$ and it is enough for applications in \cite{p2} and in the current article.}

For the existence of the weak solutions (in the viscosity sense) of (\ref{DP}) in the case of the continuous data  see \cite{h-l}.

\subsection{Monge-Amp\`ere operator}\label{Monge-Amp\`ere operator} From here, we will assume that $M$ is an almost complex surface.
 For $u,v\in W^{1,2}_{loc}(\Omega)$, where $\Omega\subset M$ is a domain, we can define a wedge product
\begin{equation*}\label{wedgeproduct}
\dc u\wedge\dc v\end{equation*}\begin{equation*}
:=-\dc(i\da u\wedge\db v)+\da(\da u\wedge\tb\da v)+\db(\ta\db u\wedge\db v)+\tc\da u\wedge\db v-\ta\db u\wedge\tb\da v\end{equation*}\begin{equation*}
=-\dc(i\da u\wedge\db v)+\da(\da u\wedge\tb\da v)+\db(\ta\db u\wedge\db v)-\tb\da u\wedge\ta\db v-\ta\db u\wedge\tb\da v,
\end{equation*}
as a $(2,2)$ current. 
As was mentioned before, if $u,v$ are $\mathcal{C}^2$ functions then it is the standard wedge product of continuous forms. If $u,v\in W^{1,2}_{loc}(\Omega)$ are plurisubharmonic then it is a regular Borel measure
and $(\dc u)^2$ is  the Monge-Amp\`ere operator of the function $u$. By the regularization we can see that the above wedge product is symmetric.

We can in a standard way define for $u\geq0$ (and so for all bounded $u$)
$$i\da u\wedge\db u\wedge\dc v=\frac{1}{2}\dc u^2\wedge\dc v-u\dc u\wedge\dc v.$$
By the smooth approximation and the Lebesgue monotone convergence theorem we get
$$i\da u\wedge\db u\wedge\dc v\geq0.$$

Note that a current $dd^cu$ is well defined  for $u$  plurisubharmonic and $$d^c=-dJ=i(\db-\da),$$
where the last equality holds on the level of functions. For smooth functions we can calculate that
$$(dd^cu)^2=(\dc u)^2+2\tb\da u\wedge\ta\db u$$
and it is clear that for bounded (or in $W^{1,2}_{loc}$) plurisubharmonic function $u$ it is a positive measure too. Moreover  it is an exact current
$$(dd^cu)^2=d(d^cu\wedge dd^cu)$$
so we can apply Stoke's theorem and even  prove a comparison principle for it (compare Proposition 29). We will exploit this facts, but we will not use  the notion $dd^c$ besides this subsection.  The main reasons that we prefer here to develop $(\dc u)^2$ is that, at least for smooth plurisubharmonic functions , $(\dc u)^2=0$ if and only if $u$ is maximal (see \cite{h-l} or \cite{p1}) and it is no longer true for the operator $(dd^cu)^2$. A similar property is that if $u_0$ is a fixed smooth strictly plurisubharmonic function in neighbourhood of $\bar\Omega$ then a smooth function $u$ is plurisubharmonic if and only if for any $t>0$ we have $(\dc (u+tu_0))^2>0$ and again it is not true for $(dd^cu)^2$.

To see that the case of the non-integrable almost complex structure is very different form complex manifold recall that on a holomorphic surface
$$\int_\Omega\varphi(dd^cu)^2=\int_\Omega udd^c\varphi\wedge dd^cu$$
for smooth $\varphi$ with compact support which does not holds in our situation.

\section{Basic estimates}

The following proposition was proved in \cite{p3}.
\begin{prp}[proposition 4.2 in \cite{p3}]\label{W12estimates} Let $u\in\mathcal{PSH}\cap{W}^{1,2}_{loc}(\Omega)$ then:\\
i) If $v\in\mathcal{PSH}(\Omega)$ and $v\geq u$,  then $v\in{W}^{1,2}_{loc}(\Omega)$;\\
ii) If a sequence $u_j$ of plurisubharmonic functions decreases to $u$, then it converges in ${W}^{1,2}_{loc}$.
\end{prp}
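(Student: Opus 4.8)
The plan is to derive both parts from a single local gradient estimate for $v$, controlled by the data of $u$. Fix $\Omega'\Subset\Omega$ and a cut-off $\chi\in\mathcal{C}^\infty_0(\Omega)$ with $0\le\chi\le1$ and $\chi\equiv1$ on $\Omega'$. Since $v$ is plurisubharmonic it is locally bounded above, so after subtracting a constant we may assume $u\le v\le 0$ on a neighbourhood of $\operatorname{supp}\chi$. Because the metric is almost hermitian, $\int_{\Omega'}|\nabla v|^2\,dV$ is comparable to $\int\chi\, i\da v\wedge\db v\wedge\omega$, so it suffices to bound the latter. Here I would use the pointwise $(1,1)$-identity $i\da v\wedge\db v=\tfrac12\dc(v^2)-v\,\dc v$, which (as the computation behind the wedge-product formula of the previous subsection shows) holds with no torsion correction on the level of $(1,1)$-forms.

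The estimate itself: after integrating $\tfrac12\dc(v^2)\wedge\chi\omega$ by parts, the $v^2$-term is harmless, since $v^2$ is bounded above by $0$ and below by $u^2\in L^1_{loc}$, so it is dominated by $v^2$ times a smooth form, the remaining first-order terms being handled by Cauchy--Schwarz. The dangerous term is $\int\chi(-v)\,\dc v\wedge\omega\ge0$. I would dominate it using $-v\le -u$ and then integrate by parts once more to transfer the mass from $v$ to $u$; at this point the hypothesis $\nabla u\in L^2_{loc}$ enters, to control the cross terms $\da\chi\wedge\db u$ and the like, together with the local finiteness of the Monge--Amp\`ere-type mass $\dc u\wedge\omega$. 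To make these manipulations rigorous I would run them on the truncations $v_k=\max(v,-k)$ (smooth decreasing approximations from Theorem \ref{regularisation} serving the same purpose) and pass to the limit by monotone convergence. The outcome is a bound on $\int_{\Omega'}|\nabla v|^2\,dV$ depending only on $\chi$, $\omega$, $\sup_{\operatorname{supp}\chi}v$ and $\|u\|_{W^{1,2}(\operatorname{supp}\chi)}$, which is exactly (i).

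For (ii), note that $u\le u_j\le u_1$ for all $j$, so the bound just obtained is uniform in $j$; hence $\{u_j\}$ is bounded in $W^{1,2}_{loc}$. Since $u_j\downarrow u$ pointwise and $|u_j|\le\max(|u|,|u_1|)\in L^2_{loc}$ (using (i) for $u_1$), dominated convergence gives $u_j\to u$ in $L^2_{loc}$, and by weak compactness $\nabla u_j\rightharpoonup\nabla u$ in $L^2_{loc}$. To upgrade to strong convergence it is enough, in a Hilbert space, to prove convergence of the norms, i.e. $\int\chi|\nabla u_j|^2\to\int\chi|\nabla u|^2$; I would obtain this from the same integration-by-parts identity, using the weak-$*$ convergence $\dc u_j\to\dc u$ of the measures together with $u_j\to u$ in $L^1_{loc}$.

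The step I expect to be the main obstacle is the integration by parts. Because the almost complex structure is not integrable, $\dc v$ is not closed, so Stokes' theorem applied through the splitting $d=\da+\db-\ta-\tb$ produces extra terms involving the torsion operators $\ta,\tb$ acting on $v$ (the same $\tb\da v\wedge\ta\db v$-type contributions that distinguish $(\dc v)^2$ from $(dd^cv)^2$). These terms are only in $L^2_{loc}$ and genuinely depend on $v$, so they cannot be discarded as in the integrable Bedford--Taylor/B\l ocki setting; I would control them by Cauchy--Schwarz and absorb them into the leading gradient term $\int\chi\, i\da v\wedge\db v\wedge\omega$, which forces the truncation and approximation bookkeeping to be carried out carefully.
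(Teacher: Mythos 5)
Note first that the paper does not prove this proposition at all: it imports it as Proposition 4.2 of \cite{p3}, and the surrounding text (the derivation of Proposition \ref{blockiw12} ``from the proof in \cite{p3}'' and the remark on B\l ocki's paper \cite{b2}) shows that the proof being referenced is exactly the B\l ocki-type integration-by-parts scheme. Your part (i) reproduces that scheme correctly: the pointwise identity $i\da v\wedge\db v=\tfrac12\dc (v^2)-v\,\dc v$ does hold with no torsion correction (torsion only enters when $d$ is applied to forms of positive degree, i.e.\ at the Stokes step, which you flag), the domination $-v\leq-u$ followed by a second integration by parts, Cauchy--Schwarz and absorption of the gradient term, and the truncation $v_k=\max(v,-k)$ to make it rigorous, are precisely the right moves; the quantitative outcome is Proposition \ref{blockiw12}. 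So (i) is essentially the intended proof.

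Part (ii) has a genuine gap at the decisive step. You reduce strong convergence to convergence of norms and then claim that $\int\chi\,u_j\,\dc u_j\wedge\omega$ converges ``using the weak-$*$ convergence $\dc u_j\to\dc u$ of the measures together with $u_j\to u$ in $L^1_{loc}$''. That principle is false: $\dc u_j\wedge\omega$ may charge Lebesgue-null sets, so $L^1_{loc}$ convergence of the densities says nothing, and since the limit $u$ is only upper semicontinuous, the portmanteau inequality for the lower semicontinuous function $-u$ goes the wrong way, namely $\liminf_j\int\chi(-u)\,\dc u_j\wedge\omega\geq\int\chi(-u)\,\dc u\wedge\omega$, whereas you need a $\limsup\leq$. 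This is exactly the difficulty that the Bedford--Taylor convergence machinery exists to overcome; in this paper the corresponding statements (Proposition \ref{capomega}, Corollary \ref{frazypradygdyzbiegaja}) are proved \emph{later} and their proofs use the present proposition, so they cannot be invoked here. The standard repair (and the natural reading of \cite{b2,p3}) avoids varying measures altogether: set $w_j=u_j-u\geq0$ and estimate
$$\int\chi\, i\da w_j\wedge\db w_j\wedge\omega=\frac{1}{2}\int\chi\,\dc (w_j^2)\wedge\omega-\int\chi\, w_j\,\dc u_j\wedge\omega+\int\chi\, w_j\,\dc u\wedge\omega\,,$$
then discard the middle term, which is $\leq0$ by positivity of $\dc u_j\wedge\omega$ and $w_j\geq 0$. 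The last term tends to $0$ by monotone convergence with respect to the \emph{fixed} positive measure $\dc u\wedge\omega$ (its finiteness against $w_1$ on ${\rm supp}\,\chi$ follows from one integration by parts and the $W^{1,2}$ bound of part (i)), and the first term goes to $0$ after Stokes, since it only produces integrands $w_j^2\times(\mbox{smooth})$ and $w_j\,\da w_j\times(\mbox{smooth})$, handled by dominated convergence ($w_j^2\leq 2(u_1^2+u^2)\in L^1_{loc}$) and by Cauchy--Schwarz together with the uniform gradient bound from (i). This yields $\|w_j\|_{W^{1,2}(\Omega')}\to0$ directly, with no weak-compactness detour and no products of varying measures with varying discontinuous integrands.
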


Note here that $i)$ imply that bounded plurisubharmonic functions are in $W^{1,2}$. From the proof in \cite{p1} of the above we also get
\begin{prp}\label{blockiw12} Let $D\Subset \Omega$, $u,v\in\mathcal{PSH}(\Omega)$, $u\leq v\leq0$ and $u\in{W}^{1,2}(\Omega)$. Then
 $v\in{W}^{1,2}(D)$ and $\|v\|_{W^{1,2}(D)}\leq C\|u\|_{W^{1,2}(\Omega)}$, where the constant $C$ depends only on $D$ and $\Omega$.
\end{prp}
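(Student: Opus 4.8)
The plan is to prove Proposition \ref{blockiw12}: given $u\leq v\leq 0$ plurisubharmonic with $u\in W^{1,2}(\Omega)$, show $v\in W^{1,2}(D)$ with a norm bound depending only on $D\Subset\Omega$. Since $v$ is automatically in $W^{1,2}_{loc}$ by part i) of Proposition \ref{W12estimates}, the real content is the quantitative estimate, so the whole game is to produce an explicit $W^{1,2}(D)$ bound in terms of $\|u\|_{W^{1,2}(\Omega)}$. Because $v\leq 0$, the $L^2(D)$ part of the norm is easy: I would first bound $\|v\|_{L^2(D)}$ by $\|u\|_{L^2(\Omega)}$. Indeed $u\leq v\leq 0$ gives $v^2\leq u^2$ pointwise, so $\|v\|_{L^2(D)}\leq\|u\|_{L^2(\Omega)}$ immediately; this part requires no work. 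The heart of the matter is controlling the gradient $\|\nabla v\|_{L^2(D)}$.

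For the gradient estimate the standard device is a cutoff and integration by parts against the positivity $i\da v\wedge\db v\geq 0$ (or equivalently a Cauchy–Schwarz on $dv\wedge d^cv$), which in the complex case yields $\int_D|\nabla v|^2$ controlled by $\int_\Omega v\,\dc v\wedge\omega$ plus boundary/cutoff terms that, after integration by parts, are dominated by $\|v\|_{L^2}\|u\|$-type quantities. Concretely, I would fix a cutoff $\chi\in\mathcal{C}^\infty_0(\Omega)$ with $0\leq\chi\leq 1$ and $\chi\equiv 1$ on $D$, and estimate $\int\chi^2\,i\da v\wedge\db v\wedge\omega$. Since $v\leq 0$ one writes this via $\frac12\dc(\chi^2 v^2)$-type identities and integrates by parts, moving derivatives off $v$ onto $\chi$ and onto the other (bounded, $\leq 0$) factor, bounding the resulting terms by $\int\chi|v|\,|\nabla\chi|\,|\nabla v|$ and $\int v^2|\nabla\chi|^2$; a Cauchy–Schwarz absorption argument then yields $\int_D|\nabla v|^2\leq C(\int_\Omega v^2 + \text{lower order})$. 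The comparison $u\leq v\leq 0$ enters to replace the stubborn terms involving $v$ by the corresponding terms for $u$, which are finite by hypothesis.

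The main obstacle, and the reason this is not verbatim the classical $\cn$ argument, is that on a non-integrable almost complex surface the current $\dc v$ is \emph{not closed}, so the clean integration-by-parts identities break: extra torsion terms involving $\theta,\bar\theta$ appear, and the non-closed part lies only in $L^2_{loc}$ and depends on $v$ itself, as stressed in the introduction. The correct strategy is therefore to follow the proof of Proposition \ref{W12estimates} in \cite{p3} (the statement explicitly says the estimate comes "from the proof in \cite{p1}"), namely to run the regularization machinery: approximate $v$ by a decreasing sequence $v_j$ of smooth plurisubharmonic functions (available by Theorem \ref{regularisation} after localizing to an almost Stein neighborhood), do all integrations by parts on the smooth $v_j$ where every form is continuous and Stokes applies honestly, track the torsion terms $\tb\da v_j\wedge\ta\db v_j$ and bound them using the positivity $(\dc v_j)^2\geq 0$ together with the sign $v_j\leq 0$, and finally pass to the limit using part ii) of Proposition \ref{W12estimates} (monotone convergence gives $W^{1,2}_{loc}$ convergence) to transfer the uniform bound to $v$. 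The key point to verify carefully is that all constants produced are uniform in $j$ and depend only on the geometry of the pair $D\Subset\Omega$ (through $\omega$ and the cutoff $\chi$), not on $v$; granting that, the bound $\|v\|_{W^{1,2}(D)}\leq C\|u\|_{W^{1,2}(\Omega)}$ follows.
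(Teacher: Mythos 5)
Your proposal is correct and takes essentially the same route as the paper, which gives no self-contained proof of this proposition: it simply invokes the proof of Proposition \ref{W12estimates} from the author's earlier paper, i.e.\ B\l ocki's cutoff/integration-by-parts/absorption argument from \cite{b2} adapted to the almost complex setting via regularization, which is exactly the scheme you reconstruct. One precision for when you write it out: the comparison must be used as $-\int\chi^2 v\,\dc v\wedge\omega\leq-\int\chi^2 u\,\dc v\wedge\omega$ (valid because $\chi^2\,\dc v\wedge\omega\geq0$ and $u\leq v\leq0$) \emph{before} the second integration by parts --- your intermediate claim $\int_D|\nabla v|^2\leq C(\int_\Omega v^2+\text{lower order})$ is false for general plurisubharmonic $v\leq0$ without this step --- and on a surface the torsion terms you propose to track never actually enter this $\dc v\wedge\omega$ computation for bidegree reasons ($\ta\db v\wedge\omega$ is of type $(3,1)$, hence zero), the only non-integrable contribution being $d\omega$ terms.
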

Note that B\l ocki in \cite{b2} proved the above estimate for subharmonic functions in  $\mathbb{R}^n$. 

As a  direct consequence we get the following
\begin{crl}\label{oszacowaniedlaograniczonych}
 If $K\Subset\Omega$ and $u$ be a bounded plurisubharmonic function then
\begin{equation*}\label{boundedpsh}
\|u\|_{W^{1,2}(K)}\leq C\|u\|_{\Omega}.
\end{equation*}
\end{crl}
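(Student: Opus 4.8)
The plan is to reduce the statement to Proposition \ref{blockiw12} by using a constant function as the lower plurisubharmonic barrier. Write $M=\|u\|_\Omega=\sup_\Omega|u|$, so that $-M\le u\le M$ on $\Omega$. First I would normalize by passing to $v:=u-M$, which is plurisubharmonic and satisfies $v\le 0$. At the same time the constant function $w:=-2M$ is (trivially) smooth plurisubharmonic, and it lies in $W^{1,2}(\Omega)$ with $\|w\|_{W^{1,2}(\Omega)}=2M\,|\Omega|^{1/2}$, since its gradient vanishes; because $\Omega$ is relatively compact we have $|\Omega|<\infty$, so this quantity is finite.

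The key observation is then simply that $w\le v\le 0$, so that the pair $(w,v)$ satisfies exactly the hypotheses of Proposition \ref{blockiw12} with $D=K$ (recall $K\Subset\Omega$). Applying that proposition gives $v\in W^{1,2}(K)$ together with the estimate $\|v\|_{W^{1,2}(K)}\le C\|w\|_{W^{1,2}(\Omega)}\le 2C\,|\Omega|^{1/2}\,M$, where $C$ depends only on $K$ and $\Omega$.

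Finally I would undo the shift. Since $u=v+M$ and $M$ is a constant, the triangle inequality yields $\|u\|_{W^{1,2}(K)}\le\|v\|_{W^{1,2}(K)}+M\,|K|^{1/2}$, and combining this with the previous bound gives $\|u\|_{W^{1,2}(K)}\le C'\|u\|_\Omega$ with $C'$ depending only on $K$ and $\Omega$. I do not anticipate any genuine obstacle: the only points to verify are that constant functions qualify as admissible barriers in Proposition \ref{blockiw12} (they are smooth plurisubharmonic and, on the relatively compact $\Omega$, belong to $W^{1,2}$) and that the normalization $v\le 0$ demanded by the proposition is achieved by the harmless shift by $M$.
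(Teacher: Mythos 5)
Your proof is correct and is essentially the paper's own argument: the paper states this corollary as a ``direct consequence'' of Proposition \ref{blockiw12}, and the intended deduction is exactly your shift-by-a-constant reduction, taking the constant $-2\|u\|_\Omega$ as the lower plurisubharmonic barrier in $W^{1,2}(\Omega)$ and the normalized function $u-\|u\|_\Omega\le 0$ as the upper one. The bookkeeping at the end (undoing the shift via the triangle inequality, with constants depending only on $K$ and $\Omega$) is exactly what is needed.
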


\begin{thr}[Chern-Levine-Nirenberg inequalities] 
Let  $K\Subset\Omega$ and $u,v\in\mathcal{PSH}\cap W^{1,2}(\Omega)$, then
\begin{equation*}\label{111}
\int_K\dc u\wedge\dc v\leq C\|u\|_{W^{1,2}(\Omega)}\|v\|_{W^{1,2}(\Omega)},
\end{equation*}
and if in addition $u,v$ are bounded then
\begin{equation*}\label{11}
\int_K\dc u\wedge\dc v\leq C\|u\|_{\Omega}\|v\|_{\Omega}.
\end{equation*}
\end{thr}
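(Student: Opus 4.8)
The plan is to test the positive measure $\dc u\wedge\dc v$ against a cut-off and then integrate by parts, exploiting the fact that the defining formula for $\dc u\wedge\dc v$ is arranged so that, after integration by parts, no more than one derivative ever falls on $u$ and one on $v$. Fix $\chi\in\gl_0(\Omega)$ with $0\le\chi\le1$ and $\chi\equiv1$ on $K$. By Theorem \ref{regularisation} and Proposition \ref{W12estimates} I may assume $u,v$ smooth (the measures converge weakly and the $W^{1,2}(\Omega)$ norms converge along the regularizing sequences), so that the manipulations below are the classical ones; alternatively one reads every identity in the distributional sense in which $\dc u\wedge\dc v$ was defined. Since the measure is positive,
\begin{equation*}
\int_K\dc u\wedge\dc v\le\int_\Omega\chi\,\dc u\wedge\dc v,
\end{equation*}
and I substitute the defining formula for $\dc u\wedge\dc v$ into the right-hand side.

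Next I treat the three \emph{exact} terms $-\dc(i\da u\wedge\db v)$, $\da(\da u\wedge\tb\da v)$ and $\db(\ta\db u\wedge\db v)$. Each is a derivative of a current whose coefficients are products of first-order derivatives of $u$ and $v$; here $\ta,\tb$ are zeroth-order torsion operators, so $\tb\da v$ is pointwise dominated by $|\nabla v|$ and $\ta\db u$ by $|\nabla u|$. Integrating by parts transfers the differentiations onto $\chi$. The one point requiring care is that Stokes' theorem is available only for $d=\da+\db-\ta-\tb$, not for $\da$, $\db$ or $\dc$ separately; hence each integration by parts produces, besides the expected term in which the derivatives have moved onto $\chi$, additional \emph{torsion corrections} coming from the $\ta,\tb$ components of $d$. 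Because $\ta,\tb$ are algebraic (contraction with the Nijenhuis tensor, hence bounded by a smooth function of $J$ alone), these corrections are again integrals of products of at most first-order derivatives of $u$ and $v$ against bounded coefficients built from $\chi,\nabla\chi,\nabla^2\chi$ and $J$. In particular no term ever carries a second derivative of $u$ or of $v$: for the leading term $-\dc(i\da u\wedge\db v)$ both derivatives of $\dc$ are moved onto $\chi$, giving $\int i\da u\wedge\db v\wedge\dc\chi$ plus torsion, while the other two need only a single integration by parts.

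The two remaining terms $\tc\,\da u\wedge\db v$ and $-\ta\db u\wedge\tb\da v$ carry no differentiation of the coefficients at all: they are pointwise products of first derivatives of $u$ and $v$ with bounded torsion coefficients, so paired against $\chi$ they are dominated by $\|\chi\|_\infty\int_\Omega|\nabla u|\,|\nabla v|$. Collecting everything, the whole of $\int_\Omega\chi\,\dc u\wedge\dc v$ is bounded by a finite sum of integrals of the form $\int_\Omega|\nabla u|\,|\nabla v|\,\psi$ with $\psi$ bounded, supported in $\mathrm{supp}\,\chi$, and depending only on $\chi$ and $J$. The Cauchy--Schwarz inequality then gives
\begin{equation*}
\int_K\dc u\wedge\dc v\le C\int_\Omega|\nabla u|\,|\nabla v|\le C\|u\|_{W^{1,2}(\Omega)}\|v\|_{W^{1,2}(\Omega)},
\end{equation*}
which is the first inequality. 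The main obstacle is exactly the bookkeeping of the previous paragraph: one must verify that the non-integrability of $J$ enters only through the zeroth-order operators $\ta,\tb$, so that the $W^{1,2}$ control is never exceeded.

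Finally, for the bounded case I reduce to the inequality just proved. Choose an intermediate domain with $K\Subset\Omega'\Subset\Omega$ and apply the first inequality to the pair $(K,\Omega')$, obtaining $\int_K\dc u\wedge\dc v\le C\|u\|_{W^{1,2}(\Omega')}\|v\|_{W^{1,2}(\Omega')}$. Since $u,v$ are bounded and plurisubharmonic, Corollary \ref{oszacowaniedlaograniczonych} applied on $\overline{\Omega'}\Subset\Omega$ yields $\|u\|_{W^{1,2}(\Omega')}\le C\|u\|_{\Omega}$ and likewise for $v$. Combining the two estimates gives $\int_K\dc u\wedge\dc v\le C\|u\|_{\Omega}\|v\|_{\Omega}$, with $C$ depending only on $K$ and $\Omega$, as claimed.
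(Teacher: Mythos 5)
Your proof is correct and follows essentially the same route as the paper's: pair the defining formula for $\dc u\wedge\dc v$ with a nonnegative cut-off equal to $1$ on $K$, integrate by parts so that only first derivatives of $u$ and $v$ remain against bounded coefficients, apply Cauchy--Schwarz, and deduce the bounded case from Corollary \ref{oszacowaniedlaograniczonych}. The only cosmetic differences are your explicit intermediate domain $\Omega'$ in the second part and your (over-cautious) torsion bookkeeping: in the paper's distributional computation the integrations by parts are clean, since the extra $\ta,\tb$-components vanish for bidegree reasons and the genuinely non-exact torsion terms are exactly the ones already present in the definition of the current.
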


{\it Proof:} Take a nonnegative test function $\varphi$ which is equal $1$ on $K$. By definition of the current $\dc u\wedge\dc v$ and the integration by parts we can estimate:
$$\int_K\varphi\dc u\wedge\dc v$$
$$\leq\int_\Omega\left(-\dc\varphi\wedge i\da u\wedge\db v-d\varphi\wedge(\da u\wedge\tb\da v+\ta\db u\wedge\db v)\right)$$$$+\int_\Omega\varphi(\tc\da u\wedge\db v-\ta\db u\wedge\tb\da v)$$
$$\leq C\|u\|_{W^{1,2}(\Omega)}\|v\|_{W^{1,2}(\Omega)},$$
where $C$ depends on $\varphi$ and $J$. The second part follows  from the first one and Corollary \ref{oszacowaniedlaograniczonych}. $\;\Box$

\section{Convergence Theorem for increasing sequences}
As in the integrable case we define the relative capacity of the Borel subset $E$ of $\Omega$ as
$$cap(E)=cap(E,\Omega)=\sup\{\int_E(\dc u)^2:u\in\mathcal{PSH}(\Omega),-1\leq u\leq0\}.$$
We shall also consider the following set function associated to the form  $\omega$:
$$cap_\omega(E)=cap_\omega(E,\Omega)=\sup\{\int_E \dc u \wedge\omega
:u\in\mathcal{PSH}(\Omega),-1\leq u\leq0\}.$$
When $E\Subset\Omega$ then by the Chern-Levine-Nirenberg inequalities we have \begin{equation*}
    cap(E,\Omega)<+\infty
\end{equation*} and thus if there is a bounded function $h\in\mathcal{PSH}(\Omega)$ which satisfies $\omega\leq \dc h$ we also have $cap_\omega(E,\Omega)<+\infty$. We say that a sequence  $u_k$ of plurisubharmonic functions defined on $\Omega$  converges with respect to capacity to a function $u$ if for any compact set $K\subset\Omega$ and $t>0$
$$\lim_{k\rightarrow\infty}cap(K\cap\{|u-u_k|>t\},\Omega)=0.$$ In the same way we define the convergence with respect to $cap_\omega$.

\begin{prp}\label{capomega}Let $\Omega$ be a strictly pseudoconvex domain and let $u_k$ be a sequence of plurisubharmonic functions
which decreases on $\Omega$ to a bounded plurisubharmonic function u. Then it
converges with respect to $cap_\omega$.
\end{prp}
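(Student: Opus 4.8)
The plan is to reduce everything to a Chebyshev-type inequality combined with the $W^{1,2}_{loc}$-convergence supplied by Proposition \ref{W12estimates}. Since $u_k$ decreases to $u$ we have $u_k\geq u$, so for every $t>0$ and every compact $K\subset\Omega$ the relevant set is $K\cap\{|u-u_k|>t\}=K\cap\{u_k-u>t\}$, on which $u_k-u\geq t$. Hence for any competitor $v\in\psh(\Omega)$ with $-1\leq v\leq 0$, using that $\dc v\wedge\omega\geq0$ and that $u_k-u\geq0$ on all of $\Omega$,
\begin{equation*}
t\int_{K\cap\{u_k-u>t\}}\dc v\wedge\omega\leq\int_{K}(u_k-u)\,\dc v\wedge\omega.
\end{equation*}
Taking the supremum over such $v$ reduces the whole statement to showing that $\sup_v\int_K(u_k-u)\,\dc v\wedge\omega\to0$ as $k\to\infty$.

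Next I would fix a cut-off $\chi\in\mathcal{C}^\infty_0(\Omega)$ with $0\leq\chi\leq1$ and $\chi\equiv1$ on $K$, so that $\int_K(u_k-u)\,\dc v\wedge\omega\leq\int_\Omega\chi(u_k-u)\,\dc v\wedge\omega$. The core is then to estimate $\int_\Omega g\,\dc v\wedge\omega$ with $g:=\chi(u_k-u)\geq0$, by integrating by parts exactly as in the proof of the Chern--Levine--Nirenberg inequalities above, now with one factor $\dc$ replaced by the fixed smooth form $\omega$ and the weight $g$ inserted. Moving one derivative off $\dc v$ produces terms of the type $\da g\wedge\db v\wedge\omega$ (and its conjugate), together with terms in which the derivative falls on $\omega$ or is absorbed by the torsion operators $\ta,\tb$; each of these carries at most one derivative of $v$, at most one derivative of $g$, and otherwise only bounded smooth coefficients. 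Consequently one obtains
\begin{equation*}
\int_\Omega g\,\dc v\wedge\omega\leq C\,\|g\|_{W^{1,2}(\Omega)}\,\|v\|_{W^{1,2}(\mathrm{supp}\,\chi)},
\end{equation*}
with $C$ depending only on $\chi$, $\omega$ and $J$.

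Finally I would close the argument with two uniform inputs. Because $-1\leq v\leq0$, Corollary \ref{oszacowaniedlaograniczonych} gives $\|v\|_{W^{1,2}(\mathrm{supp}\,\chi)}\leq C'$ uniformly over all competitors $v$; and since $u_k\downarrow u$ with $u$ bounded, Proposition \ref{W12estimates} yields $u_k\to u$ in $W^{1,2}_{loc}$, whence $\|g\|_{W^{1,2}(\Omega)}\leq C''\|u_k-u\|_{W^{1,2}(\mathrm{supp}\,\chi)}\to0$. Combining these, $\sup_v\int_K(u_k-u)\,\dc v\wedge\omega\to0$, and therefore $cap_\omega(K\cap\{u_k-u>t\})\leq\tfrac1t\sup_v\int_K(u_k-u)\,\dc v\wedge\omega\to0$, which is exactly convergence with respect to $cap_\omega$. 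The main obstacle is the integration by parts of the middle step: on a non-integrable almost complex surface $\omega$ need not be closed, and the current $\dc v$ is defined only through the distributional formula of the preceding subsection, so one must verify that the extra terms coming from $d\omega\neq0$ and from the torsion $\ta,\tb$ really are of lower order and stay controlled by the $W^{1,2}$-norms. This is where the smoothness of $\omega$ and the boundedness of the structure coefficients enter, and it is most safely justified by first regularizing $u_k$ and $u$ by smooth (strictly) plurisubharmonic functions and then passing to the limit.
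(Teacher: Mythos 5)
Your proposal is correct and follows essentially the same route as the paper: both reduce the statement to showing $\sup_v\int_\Omega(u_k-u)\,\dc v\wedge\omega\to0$, handle this by integration by parts, bound the competitor $v$ uniformly via Corollary \ref{oszacowaniedlaograniczonych}, and invoke Proposition \ref{W12estimates} for the $W^{1,2}_{loc}$-convergence of $u_k$. The only cosmetic difference is that the paper localizes by arranging $u_k=u$ outside a compact set instead of inserting a cut-off function $\chi$.
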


\begin{proof}
We can asume that all $u_k$ are equal outside a compact set $E\subset\Omega$. We fix $v\in\mathcal{PSH}(\Omega)$, $-1\leq v\leq 0$. Using integration by parts and Corollary \ref{oszacowaniedlaograniczonych} we can estimate
$$0\leq I_k=\int_\Omega (u_k-u)\dc v\wedge\omega$$
$$=-\int_\Omega i\da (u_k-u)\wedge\db v\wedge\omega+\int_\Omega i (u_k-u)\db v\wedge\da\omega$$
$$\leq C\|u_k-u\|_{W^{1,1}(K)}\|v\|_{W^{1,1}(K)}\leq C'\|u_k-u\|_{W^{1,1}(K)},$$
where constants $C$, $C'$ depend only on $\Omega$. Since our estimate does not depend on the function  $v$, by Propositions \ref{W12estimates}  we get that $I_k\to 0$ as $k\to\infty$ independently on $v$ and (as in \cite{K}) the Proposition follows.  
 \end{proof}

\begin{prp}Let $u$ be a bounded plurisubharmonic function on $\Omega$ and $\varepsilon>0$. Then, there exists an open set $U\subset\Omega$ with $cap_\omega(U,\Omega)<\varepsilon$ and such that $u$ restricted to $\Omega\setminus U$ is continuous.
\end{prp}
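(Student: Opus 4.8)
The plan is to deduce quasi-continuity from the regularization theorem together with the convergence in capacity of Proposition \ref{capomega}, via a standard Borel--Cantelli (Egorov-type) construction. First I would use Theorem \ref{regularisation}, applied to the almost Stein manifold $\Omega$ (recall strictly pseudoconvex domains are almost Stein), to produce a decreasing sequence $u_j$ of smooth, hence continuous, plurisubharmonic functions with $u_j\downarrow u$. By Proposition \ref{capomega} this sequence converges to $u$ with respect to $cap_\omega$; since $u_j\downarrow u$ this reads: for every compact $K\subset\Omega$ and every $t>0$ one has $cap_\omega(K\cap\{u_j-u>t\},\Omega)\to0$.

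The key observation is that each set $G_{j,t}:=\{u_j-u>t\}$ is open, because $u_j$ is continuous and $u$ is upper semicontinuous, so $u_j-u$ is lower semicontinuous. Working first on a fixed relatively compact piece, I would fix an open $W$ with $K\subset W\Subset\Omega$ and, for each $k$, choose $j_k$ so large that $cap_\omega(\bar W\cap\{u_{j_k}-u>1/k\},\Omega)<\delta\,2^{-k}$. Setting $U_K:=\bigcup_k(W\cap G_{j_k,1/k})$ gives an open set. Countable subadditivity of $cap_\omega$ --- which follows at once from the fact that for each fixed competitor $v$ the current $\dc v\wedge\omega$ is a genuine positive measure, so $\int_{\bigcup_k A_k}\dc v\wedge\omega\le\sum_k\int_{A_k}\dc v\wedge\omega$ before one takes the supremum over $v$ --- then yields $cap_\omega(U_K,\Omega)<\delta$. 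On $K\setminus U_K$ every point avoids all $G_{j_k,1/k}$, hence $0\le u_{j_k}-u\le 1/k$ there, so $u_{j_k}\to u$ uniformly on $K\setminus U_K$; being a uniform limit of continuous functions, $u$ is continuous on $K\setminus U_K$.

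Finally I would globalize by exhaustion. Choosing $\Omega_1\Subset\Omega_2\Subset\cdots$ with $\bigcup_m\Omega_m=\Omega$, I apply the previous step to $K=\bar\Omega_m$ with $\delta=\varepsilon\,2^{-m}$ to obtain open sets $U_m$, and put $U:=\bigcup_m U_m$. Again by subadditivity $cap_\omega(U,\Omega)<\varepsilon$. For any $x\in\Omega\setminus U$ one has $x\in\Omega_m$ for some $m$; since $\Omega_m$ is open and $U_m\subset U$, all nearby points of $\Omega\setminus U$ lie in $\Omega_m\setminus U_m$, on which $u$ is continuous, so $u|_{\Omega\setminus U}$ is continuous as continuity is a local property.

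The main obstacle I anticipate is not the Egorov-type estimate but reconciling the two features of $cap_\omega$ used above: the convergence in Proposition \ref{capomega} is only controlled on compact subsets, whereas the conclusion demands a single open $U\subset\Omega$ with $u|_{\Omega\setminus U}$ continuous. The exhaustion resolves this, but it forces one to check carefully that continuity on each $\bar\Omega_m\setminus U_m$ patches to continuity on all of $\Omega\setminus U$, and that countable subadditivity of $cap_\omega$ is genuinely available --- a point worth recording explicitly, since it is exactly what lets the geometric series $\sum\delta\,2^{-k}$ and $\sum\varepsilon\,2^{-m}$ close up.
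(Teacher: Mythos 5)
Your proposal is correct and is essentially the paper's own proof: the paper simply notes that, thanks to the regularization theorem (Theorem \ref{regularisation}) and the convergence in $cap_\omega$ for decreasing sequences (Proposition \ref{capomega}), one can repeat the classical argument for domains in $\cn$ (Ko\l odziej's Theorem 1.13), which is exactly the Egorov/Borel--Cantelli scheme you carried out. Your write-up just makes explicit the ingredients the paper leaves to the reader --- openness of $\{u_j-u>t\}$ by lower semicontinuity, countable subadditivity of $cap_\omega$ via positivity of $\dc v\wedge\omega$, and the exhaustion by compacts --- so there is nothing to correct.
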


Using the regularization result (Theorem \ref{regularisation}) we can prove it exactly like in the case of domains in $\cn$ (see for example the proof of theorem 1.13 in \cite{K}).

Again exactly as in $\cn$, from the above Proposition we get

\begin{crl}\label{frazypradygdyzbiegaja}
Let $\mathcal{U}$ be a uniformly bounded family of plurisubharmonic functions in  $\Omega$. Suppose that $u,v\in\mathcal{U}$ and $(v_k)\subset\mathcal{U}$ and $$\dc v_k\to \dc v.$$ Then $$u\dc v_k\to u\dc v.$$
\end{crl}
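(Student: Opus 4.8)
The plan is to read the hypothesis $\dc v_k\to\dc v$ as weak convergence of positive $(1,1)$-currents and to establish $u\dc v_k\to u\dc v$ by pairing with an arbitrary smooth, compactly supported $(1,1)$-form $\psi$; the only genuine difficulty is that $u$ is merely upper semicontinuous, not continuous, so the weak convergence (which a priori controls only smooth test objects) cannot be applied to $u\psi$ directly. Fix such a $\psi$, let $A$ be a common bound for the family, so that $|w|\le A$ for every $w\in\mathcal{U}$, and choose $C>0$ with $\pm\psi\le C\omega$ on a neighbourhood of $\mathrm{supp}\,\psi$ (possible since $\psi$ is smooth with compact support and $\omega$ is positive definite). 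The goal is then to show $\int_\Omega u\,\dc v_k\wedge\psi\to\int_\Omega u\,\dc v\wedge\psi$.

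The main device is the quasi-continuity supplied by the preceding Proposition. Given $\varepsilon>0$, I would choose an open set $U\subset\Omega$ with $cap_\omega(U,\Omega)<\varepsilon$ such that $u|_{\Omega\setminus U}$ is continuous, and then extend this restriction by Tietze's theorem to a continuous function $\tilde u$ on $\Omega$ with $\|\tilde u\|_\Omega\le A$. With this in hand I split
$$\int_\Omega u\,\dc v_k\wedge\psi-\int_\Omega u\,\dc v\wedge\psi=\int_\Omega\tilde u\,(\dc v_k-\dc v)\wedge\psi+\int_\Omega(u-\tilde u)\,\dc v_k\wedge\psi-\int_\Omega(u-\tilde u)\,\dc v\wedge\psi,$$
isolating a continuous ``main'' term and two ``error'' terms supported in $U$.

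For the continuous term, $\tilde u\psi$ is a continuous compactly supported $(1,1)$-form, and the masses $\int_{\mathrm{supp}\,\psi}\dc v_k\wedge\omega$ are uniformly bounded (by the finiteness of $cap_\omega$ on relatively compact sets, equivalently the Chern--Levine--Nirenberg inequalities, together with the uniform bound on $\mathcal{U}$); hence weak convergence against smooth forms extends to $\tilde u\psi$ by a routine uniform approximation of $\tilde u$ by smooth functions, and this term tends to $0$ as $k\to\infty$. For the error terms, $u-\tilde u$ vanishes on $\Omega\setminus U$ and satisfies $|u-\tilde u|\le 2A$, so using $\pm\psi\le C\omega$ I get $\bigl|\int_\Omega(u-\tilde u)\,\dc v_k\wedge\psi\bigr|\le 2AC\int_U\dc v_k\wedge\omega$. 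Now the rescaled function $\frac{1}{2A}(v_k-A)$ is plurisubharmonic and lies in $[-1,0]$, so it competes in the definition of $cap_\omega$ and yields $\int_U\dc v_k\wedge\omega=2A\int_U\dc\bigl(\frac{1}{2A}(v_k-A)\bigr)\wedge\omega\le 2A\,cap_\omega(U,\Omega)<2A\varepsilon$, with the identical bound holding for $v$. Thus both error terms are $O(A^2C\varepsilon)$ uniformly in $k$, giving $\limsup_k\bigl|\int_\Omega u\,\dc v_k\wedge\psi-\int_\Omega u\,\dc v\wedge\psi\bigr|=O(\varepsilon)$, and letting $\varepsilon\downarrow 0$ finishes the argument.

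The one place where the proof really exploits the theory developed so far is the replacement of the upper semicontinuous $u$ by the continuous $\tilde u$: this is precisely the obstacle that quasi-continuity is designed to overcome, and the accompanying capacity estimate $\int_U\dc v_k\wedge\omega\le 2A\,cap_\omega(U,\Omega)$ is exactly what makes the error uniform in $k$. Everything else is the standard three-term $\varepsilon$-estimate, and I expect the non-integrability of $J$ — in particular the non-closedness of $\dc v_k$ — to play no role here, since the whole argument is a formal interplay of weak convergence, uniform mass bounds, and the capacity of the exceptional set.
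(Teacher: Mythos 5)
Your proof is correct and is essentially the paper's own argument: the paper proves this corollary by invoking the preceding quasi-continuity proposition for $cap_\omega$ and the standard $\mathbb{C}^n$ scheme (as in Ko\l odziej's notes), which is exactly your three-term split — replace $u$ by a continuous $\tilde u$ off an open set of small $cap_\omega$, test the weak convergence against the continuous form using uniform mass bounds, and control the error terms by the competitor estimate $\int_U\dc v_k\wedge\omega\leq 2A\,cap_\omega(U,\Omega)$.
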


\begin{lm}\label{w12cur}Let $\mathcal{U}$ be a uniformly bounded family of plurisubharmonic functions in  $\Omega$. Suppose that $(u_k),(v_k)\subset\mathcal{U}$  increase almost everywhere to $u$ and $v$ respectively. Then $$i\da u_k\wedge \db v_k\to i\da u\wedge \db v.$$
\end{lm}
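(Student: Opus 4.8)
The plan is to reduce the convergence of the product currents $i\da u_k\wedge\db v_k$ to two facts about the sequences: first, that $u_k\to u$ and $v_k\to v$ in $W^{1,2}_{loc}(\Omega)$ (which I expect to follow from Proposition \ref{W12estimates}), and second, that products of weakly convergent first-order terms behave well against test forms. Since $u_k,v_k$ lie in a uniformly bounded family of plurisubharmonic functions, Corollary \ref{oszacowaniedlaograniczonych} gives a uniform $W^{1,2}$ bound on compact subsets, so the gradients $\da u_k,\db v_k$ are bounded in $L^2_{loc}$. I would first extract from this the limiting identifications $u=\lim u_k$, $v=\lim v_k$ almost everywhere (given) together with $u,v\in\mathcal{PSH}\cap W^{1,2}_{loc}$ via Proposition \ref{W12estimates}, so that $i\da u\wedge\db v$ is a well-defined current.

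The core of the argument is to test $i\da u_k\wedge\db v_k-i\da u\wedge\db v$ against a smooth compactly supported $(1,1)$-form $\eta$ and split the difference in the standard telescoping way:
\begin{equation*}
i\da u_k\wedge\db v_k-i\da u\wedge\db v=i\da(u_k-u)\wedge\db v_k+i\da u\wedge\db(v_k-v).
\end{equation*}
For the first term, I would integrate by parts to move the $\da$ off $(u_k-u)$, producing integrals of $(u_k-u)$ against derivatives of $\eta$ and against $\db v_k$; the factor $(u_k-u)$ tends to $0$ in $L^2_{loc}$ while $\db v_k$ stays bounded in $L^2_{loc}$, so by Cauchy-Schwarz this term vanishes. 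The second term is handled symmetrically, using that $\db(v_k-v)\to0$ weakly in $L^2_{loc}$ while $\da u\wedge\eta$ is a fixed $L^2_{loc}$ object. This is exactly the mechanism already used in the proof of Proposition \ref{capomega} and in the Chern-Levine-Nirenberg estimate, so the bounds are of the same flavor.

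The main obstacle I anticipate is the presence of the torsion operators $\ta,\tb$ hidden in the definition of the wedge product and in the integration by parts on an almost complex (non-integrable) surface: unlike the integrable case, $\da$ and $\db$ do not commute cleanly, and the $(2,2)$-current structure means the naive expression $i\da u_k\wedge\db v_k$ must be interpreted so that its pairing with $\eta$ is genuinely controlled by the $W^{1,2}$ norms. I would therefore be careful to keep all estimates at the level of the $L^2_{loc}$ norms of $\da u_k$, $\db v_k$ and of the functions themselves, where the torsion terms contribute only lower-order, uniformly bounded factors (as in the Chern-Levine-Nirenberg proof above). Once the weak convergence against every test form $\eta$ is established, the convergence of currents $i\da u_k\wedge\db v_k\to i\da u\wedge\db v$ follows by definition, completing the proof.
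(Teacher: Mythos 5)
Your proposal has a genuine gap, and it sits exactly where the difficulty of the lemma lies. The input you want to use --- $u_k\to u$ and $v_k\to v$ strongly in $W^{1,2}_{loc}$ --- is not available: Proposition \ref{W12estimates}\,(ii) concerns \emph{decreasing} sequences only. For increasing sequences, strong $W^{1,2}_{loc}$ convergence is precisely Corollary \ref{w12dlarosnacych}, which the paper \emph{deduces from} the present lemma (take $v_k=u_k$); assuming it at the outset is circular. What you do get for free is that $u_k-u\to0$ and $v_k-v\to0$ strongly in $L^2_{loc}$ (dominated convergence, by uniform boundedness) and that $\da(u_k-u)$, $\db(v_k-v)$ tend to $0$ only \emph{weakly} in $L^2_{loc}$.

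With only this, your telescoping fails on its first term $i\da(u_k-u)\wedge\db v_k$: both factors vary with $k$, and a product of two weakly convergent sequences need not converge to the product of the limits (this failure is exactly the defect of strong convergence, visible already in $i\da(u_k-u)\wedge\db(u_k-u)$). Your fallback --- integrating by parts to move $\da$ off $(u_k-u)$ --- does not produce only terms of the form ``$(u_k-u)$ against $\da\eta$ and against $\db v_k$''; Stokes also lands the derivative on $\db v_k$, producing a term $\pm\int(u_k-u)\,\dc v_k\wedge\eta$ plus torsion terms. Here $\dc v_k$ is merely a positive current of locally bounded mass, varying with $k$, and the fact that $u_k-u\to0$ a.e.\ with respect to \emph{Lebesgue} measure (or in $L^2$) says nothing about integrals against these measures, which can charge small sets. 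This is the classical Bedford--Taylor subtlety, and it is why the paper does not argue by $L^2$ estimates at this point: it first proves $u_k\db v_k\to u\db v$ (an $L^2$/Cauchy--Schwarz argument close in spirit to yours, legitimate because only first derivatives appear), then proves $u_k\dc v_k\to u\dc v$ using Corollary \ref{frazypradygdyzbiegaja} (a consequence of quasi-continuity with respect to $cap_\omega$) combined with a $\limsup$/$\liminf$ sandwich that exploits the monotonicity of $u_k$, and finally concludes from the identity $i\da u_k\wedge\db v_k=i\da(u_k\db v_k)-u_k\dc v_k$. Note also that your appeal to the mechanism of Proposition \ref{capomega} and the Chern--Levine--Nirenberg estimate is misleading: there one factor ($\omega$, respectively a fixed test function) is smooth and fixed, so the estimate is linear in the varying sequence; here the problem is genuinely bilinear in the varying data, and that is what forces the quasi-continuity machinery.
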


\begin{proof}
First we will prove that
\begin{equation}\label{zb01}
u_k\db v_k\to u\db v.
\end{equation}
Let $\varphi\in\mathcal{D}_{(2,1)}$. Using Stokes theorem we can calculate
$$\int_\Omega u_k\db v_k\varphi-\int_\Omega u\db v\varphi$$
$$=\int_\Omega (u_k-u)\db v_k\varphi+\int_\Omega u\db( v_k-v)\varphi$$
$$=\int_\Omega (u_k-u)\db v_k\varphi+\int_\Omega ( v_k-v)\db(u\varphi).$$
Since $L^2$ norms of $\db v_k\varphi$ and $\db(u\varphi)$ depend only on $\varphi$ and $\mathcal{U}$, using the Cauchy-Schwarz inequality, we can choose a constant $C$ not depending on $k$ such that
$$\int_\Omega u_k\db v_k\varphi-\int_\Omega u\db v\varphi\leq C(\|u_k-u\|_{L^2(\Omega)}+\|v_k-v\|_{L^2(\Omega)})\to 0.$$
Thus (\ref{zb01}) follows.

The second step is to obtain the following convergence
\begin{equation}\label{zbfrazyidd}
u_k\dc v_k\to u\dc v.
\end{equation}
Let  $\varphi\in\mathcal{D}_{(1,1)}$ be positive. By Corollary \ref{frazypradygdyzbiegaja} we get 
\begin{equation}\label{oszlimsup}
\limsup_{k\to \infty}\int_\Omega u_k\dc v_k\wedge\varphi\leq\limsup_{k\to \infty} \int_\Omega u\dc v_k\wedge\varphi= \int_\Omega u\dc v\wedge\varphi.\end{equation}
Set $s\in\mathbb{N}$. Using Stokes' theorem 
we can estimate
$$\liminf_{k\to \infty}\int_\Omega u_k\dc v_k\wedge\varphi\geq\liminf_{k\to \infty} \int_\Omega u_s\dc v_k\wedge\varphi= \int_\Omega u_s\dc v\wedge\varphi$$
$$=\int v\dc u_s\wedge\varphi+\int u_si\db v\wedge\da \varphi+\int vi\da u_s\wedge\db\varphi.$$
From (\ref{zb01}) and again Corollary \ref{frazypradygdyzbiegaja} the last line with $s\to\infty$ converge to
$$\int v\dc u\wedge\varphi+\int ui\db v\wedge\da \varphi+\int vi\da u\wedge\db\varphi= \int_\Omega u\dc v\wedge\varphi.$$
This together with (\ref{oszlimsup}) gives us (\ref{zbfrazyidd}).

In the last step we will finish the proof. By (\ref{zb01}) and (\ref{zbfrazyidd}) we can conclude
$$i\da u_k\wedge \db v_k=i\da\left(u_k\db v_k\right)-u_k\dc v_k\to i\da\left(u\db v\right)-u\dc v= i\da u\wedge \db v.$$
\end{proof}

 The previous Lemma gives us easily the following two results.

\begin{crl}\label{w12dlarosnacych}
Suppose that $u_k$ is a sequence of locally bounded plurisubharmonic functions which increase to plurisubharmonic function $u$ almost everywhere.  Then $u_k$ converges to $u$ in $W^{1,2}_{loc}(\Omega)$.  
\end{crl}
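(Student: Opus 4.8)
The plan is to deduce Corollary \ref{w12dlarosnacych} directly from Lemma \ref{w12cur} by specializing to the diagonal case $u_k = v_k$ and $u = v$. The family $\mathcal{U}$ in the lemma can be taken to be any uniformly bounded family containing the $u_k$ and $u$; since the $u_k$ are locally bounded and increase to $u$, on each relatively compact subdomain they form a uniformly bounded family, so the hypotheses of the lemma are met locally. The conclusion of the lemma then gives the weak convergence of currents
$$i\da u_k\wedge\db u_k\to i\da u\wedge\db u.$$
This is almost what we want, but it is a statement about distributional convergence of the mixed second-order currents, not yet about convergence of first derivatives in $L^2_{loc}$.

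To upgrade weak convergence of $i\da u_k\wedge\db u_k$ to strong $W^{1,2}_{loc}$ convergence, I would test the current against a cutoff. Fix $D\Subset\Omega$ and a nonnegative $\varphi\in\mathcal{C}^\infty_0(\Omega)$ with $\varphi\equiv 1$ on $D$. The key observation is that pairing $i\da w\wedge\db w$ with $\varphi\,\omega$ controls the full gradient: for real-valued $w$ the quantity $\int_\Omega\varphi\, i\da w\wedge\db w\wedge\omega$ is (up to the metric constants coming from $\omega$) equivalent to $\int_D|\nabla w|^2$, since $i\da w\wedge\db w$ is the nonnegative form whose trace against the hermitian form is the squared length of $dw$. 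Applying this to $w=u_k-u$ reduces the whole problem to showing
$$\int_\Omega\varphi\, i\da(u_k-u)\wedge\db(u_k-u)\wedge\omega\to 0.$$

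Expanding the bilinear form, this last integral splits into the four terms $\int\varphi\,i\da u_k\wedge\db u_k\wedge\omega$, $-\int\varphi\,i\da u_k\wedge\db u\wedge\omega$, $-\int\varphi\,i\da u\wedge\db u_k\wedge\omega$ and $\int\varphi\,i\da u\wedge\db u\wedge\omega$. By Lemma \ref{w12cur} applied with the appropriate choices (taking the second sequence constant equal to $u$ where needed, which is legitimate since a constant sequence trivially increases to $u$), each of the three non-diagonal currents $i\da u_k\wedge\db u_k$, $i\da u_k\wedge\db u$, $i\da u\wedge\db u_k$ converges weakly to $i\da u\wedge\db u$. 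Testing each against the fixed smooth form $\varphi\,\omega$, all four terms converge to the same limit $\int\varphi\,i\da u\wedge\db u\wedge\omega$, and with the signs as above the total tends to $0$. This yields $\int_D|\nabla(u_k-u)|^2\to 0$, and since $u_k\to u$ in $L^2_{loc}$ as well (which already follows from monotone convergence and local boundedness), we conclude $u_k\to u$ in $W^{1,2}_{loc}(\Omega)$.

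The main obstacle I anticipate is the justification that pairing the positive current $i\da(u_k-u)\wedge\db(u_k-u)$ against $\varphi\,\omega$ genuinely dominates the $L^2$ gradient norm; this requires that $\omega$, being a strictly positive $(1,1)$-form, gives a uniformly elliptic estimate relating $i\da w\wedge\db w\wedge\omega$ to $|\nabla w|^2$ on the compact set $\overline D$, with constants depending only on $J$, $\omega$ and $D$. Once this equivalence is in hand the argument is purely a matter of feeding the three weak-convergence statements from Lemma \ref{w12cur} into the expanded quadratic form, so the genuinely new content is entirely contained in the previous lemma.
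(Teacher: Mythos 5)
Your argument is correct and is essentially the paper's own: the paper deduces this corollary directly from Lemma \ref{w12cur} (stating only that the previous lemma ``gives us easily'' the result), and your polarization of $\int_\Omega\varphi\, i\da(u_k-u)\wedge\db(u_k-u)\wedge\omega$ into four terms, each handled by Lemma \ref{w12cur} with constant sequences where needed, is the standard way to make that deduction explicit. The ellipticity point you flag is harmless: in complex dimension $2$ one has the pointwise identity $i\da w\wedge\db w\wedge\omega=|\da w|^2_\omega\,\omega^2/2$ for real-valued $w$, so pairing against $\varphi\,\omega$ is indeed comparable to the local gradient energy, with constants depending only on $\omega$, $J$ and the compact set.
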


\begin{thr}[Convergence Theorem for increasing sequences]\label{MAdlarosnacych}
Suppose that $u_k$ and $v_k$ are  sequences of locally bounded plurisubharmonic functions which increase to plurisubharmonic functions $u$ and $v$ respectively almost everywhere. Then 
$$\dc u_k\wedge\dc v_k\to \dc u\wedge\dc v.$$
\end{thr}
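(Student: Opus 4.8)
The plan is to bootstrap the Convergence Theorem from the wedge-product definition, using the previously established Lemma \ref{w12cur} as the engine. Recall the defining formula
\begin{equation*}
\dc u_k\wedge\dc v_k=-\dc(i\da u_k\wedge\db v_k)+\da(\da u_k\wedge\tb\da v_k)+\db(\ta\db u_k\wedge\db v_k)-\tb\da u_k\wedge\ta\db v_k-\ta\db u_k\wedge\tb\da v_k.
\end{equation*}
The first three terms are exterior derivatives applied to expressions built from $i\da u_k\wedge\db v_k$ and the torsion-twisted analogues $\da u_k\wedge\tb\da v_k$ and $\ta\db u_k\wedge\db v_k$; the last two are zeroth-order (no derivatives of the current) bilinear torsion terms. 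I would pair $\dc u_k\wedge\dc v_k$ against a fixed test $(0,0)$-form (a smooth compactly supported function) $\varphi$, integrate by parts to move the $\dc$, $\da$, $\db$ onto $\varphi$, and then show each resulting integral converges to the corresponding integral for $u,v$.

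First I would treat the three derivative terms. After integration by parts the leading term becomes $-\int i\da u_k\wedge\db v_k\wedge\dc\varphi$, and Lemma \ref{w12cur} gives precisely $i\da u_k\wedge\db v_k\to i\da u\wedge\db v$ as currents, so testing against the smooth form $\dc\varphi$ yields convergence to $-\int i\da u\wedge\db v\wedge\dc\varphi$. For the terms $\da(\da u_k\wedge\tb\da v_k)$ and $\db(\ta\db u_k\wedge\db v_k)$ the same Lemma applies after absorbing the smooth torsion operators $\ta,\tb$: since $\ta,\tb$ are smooth bundle maps, the products $\da u_k\wedge\tb\da v_k$ and $\ta\db u_k\wedge\db v_k$ converge as currents to their limits by the $W^{1,2}_{loc}$-convergence of $u_k,v_k$ (Corollary \ref{w12dlarosnacych}) together with Lemma \ref{w12cur}, and integrating the derivative by parts onto the smooth $\varphi$ preserves the convergence.

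The zeroth-order terms $-\tb\da u_k\wedge\ta\db v_k$ and $-\ta\db u_k\wedge\tb\da v_k$ are where I expect the main obstacle, and here is the crux: these terms involve products of first derivatives of $u_k$ with first derivatives of $v_k$ (after applying the smooth torsion maps), so they are genuinely quadratic in the gradients and cannot be handled by weak current convergence alone -- weak convergence of $\da u_k$ and $\db v_k$ does not imply convergence of their product. The resolution I would use is the strong $W^{1,2}_{loc}$ convergence supplied by Corollary \ref{w12dlarosnacych}: since $u_k\to u$ and $v_k\to v$ in $W^{1,2}_{loc}(\Omega)$, the first-order derivatives converge in $L^2_{loc}$, and a product of two $L^2_{loc}$-convergent sequences converges in $L^1_{loc}$. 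Concretely, writing a difference $\tb\da u_k\wedge\ta\db v_k-\tb\da u\wedge\ta\db v$ as $\tb\da(u_k-u)\wedge\ta\db v_k+\tb\da u\wedge\ta\db(v_k-v)$ and applying the Cauchy-Schwarz inequality term by term (the $L^2$ norms of $\da u_k$ and $\db v_k$ staying bounded by Corollary \ref{w12dlarosnacych}) forces the integral against $\varphi$ to zero.

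Assembling the five convergences gives $\int\varphi\,\dc u_k\wedge\dc v_k\to\int\varphi\,\dc u\wedge\dc v$ for every test function $\varphi$, which is exactly weak convergence of the measures. One should note that the measures $\dc u_k\wedge\dc v_k$ are nonnegative (the $u_k,v_k$ being plurisubharmonic) so testing against nonnegative $\varphi$ suffices and no sign subtleties arise. The only ingredients used are the definition of the wedge product, Lemma \ref{w12cur}, and the strong $W^{1,2}_{loc}$ convergence from Corollary \ref{w12dlarosnacych}, so the argument closes cleanly.
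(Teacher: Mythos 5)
Your proposal is correct and takes essentially the same route as the paper: the paper presents this theorem (together with Corollary \ref{w12dlarosnacych}) as an immediate consequence of Lemma \ref{w12cur}, and your term-by-term treatment of the defining formula for $\dc u_k\wedge\dc v_k$---Lemma \ref{w12cur} for the leading term after moving derivatives onto the test function, and strong $W^{1,2}_{loc}$ convergence from Corollary \ref{w12dlarosnacych} plus Cauchy--Schwarz for the torsion-twisted quadratic terms---is precisely the intended fleshing-out of that one-line argument. Correctly identifying that the purely algebraic (zeroth-order) torsion terms need strong, not weak, convergence of gradients is the key point, and your argument handles it properly.
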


\section{Pluripolarity}\label{Pluripolarity}
In this section we prove the pluripolarity of $J$-holomorphic curves and some other facts about pluripolarity. More general results will be proved in section \ref{Quasi-continuity and applications} but we would like to show here that some important results can be proved without quasi-continuity what is a harder task.

\begin{prp}\label{0cap} Assume that $E\subset\Omega\subset M$. Then
$$E\, plp\Rightarrow cap (E,\Omega)=0.$$
\end{prp}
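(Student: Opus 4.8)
The plan is to reduce to a clean local statement and then exploit a single well-chosen competitor together with the Convergence Theorem for increasing sequences. The point to keep in mind is that we are proving a statement about the \emph{fixed} measure $(\dc u)^2$ for each competitor $u\in\psh(\Omega)$ with $-1\le u\le0$, since $\int_E(\dc u)^2=(\dc u)^2(E)$; thus I would use countable additivity of this fixed measure rather than any subadditivity of $cap$ (which is not yet available at this stage, before quasi-continuity). First I would write $E$ as a countable union of pieces $F$, each relatively compact in a chart $D$ carrying a function $v\in\psh(D)$, $v\le0$, with $F\subset\{v=-\infty\}$; by locality of the Monge--Amp\`ere operator the restriction of $(\dc u)^2$ to $D$ depends only on $u|_D$. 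Hence it suffices to show $(\dc u)^2(F)=0$ for a Borel set $F$ with $\bar F\Subset D$ and $F\subset\{v=-\infty\}$, and then sum over the pieces.

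Then I would introduce the competitors $u_s:=\max(u,v/s)$ for $s\in\N$. Each $u_s$ is plurisubharmonic with $-1\le u_s\le0$, and since $v/s\uparrow0$ off the Lebesgue-null set $\{v=-\infty\}$, the sequence $u_s$ increases almost everywhere to the plurisubharmonic function $0$. This choice has two opposing virtues. On the open set $\Omega_s:=\{v<-s\}$ one has $v/s<-1\le u$, so $u_s=u$ there, and by locality $(\dc u_s)^2=(\dc u)^2$ on $\Omega_s$. On the other hand $F\subset\{v=-\infty\}\subset\Omega_s$ for every $s$. Consequently $(\dc u_s)^2(F)=(\dc u)^2(F)$ for all $s$: the mass over $F$ is frozen at the value coming from $u$.

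Next I would pass to the limit. By the Convergence Theorem for increasing sequences (Theorem \ref{MAdlarosnacych}), applied to the sequence $u_s$ against itself, $(\dc u_s)^2\to(\dc 0)^2=0$ in the weak sense of currents. Fixing $\chi\in\mathcal{C}^\infty_0(D)$ with $0\le\chi\le1$ and $\chi\equiv1$ on a neighbourhood of $\bar F$, I would estimate, for every $s$,
\[
(\dc u)^2(F)=(\dc u_s)^2(F)\le\int_D\chi\,(\dc u_s)^2\longrightarrow0 .
\]
Since the left-hand side is independent of $s$, this forces $(\dc u)^2(F)=0$, and summing over the countably many pieces of $E$ yields $\int_E(\dc u)^2=0$ for every admissible $u$, i.e.\ $cap(E,\Omega)=0$.

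The hard part is conceptual rather than computational: it is the simultaneous use of the two opposite effects of $u_s$, namely that it must lose all its Monge--Amp\`ere mass in the limit (guaranteed by the Convergence Theorem, because $u_s\uparrow0$ almost everywhere) while its mass over $F$ stays equal to that of $u$ (because $u_s=u$ on the open set $\Omega_s\supset F$). Making the freezing precise relies on locality of $(\dc\,\cdot\,)^2$ for bounded plurisubharmonic functions, and converting the weak convergence $(\dc u_s)^2\to0$ into the bound $(\dc u_s)^2(F)\to0$ is handled by the cutoff $\chi$. The remaining care goes into the preliminary localization, since $E$ is neither assumed relatively compact nor contained in a single chart, so one must first cut it into relatively compact pieces each equipped with its own negative potential $v$.
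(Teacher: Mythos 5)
Your proof is correct and follows essentially the same route as the paper: for each competitor $u$ you form $u_s=\max(u,v/s)$ with $v$ a local potential of the pluripolar set, apply the Convergence Theorem for increasing sequences to get $(\dc u_s)^2\to0$, and use locality of the operator on $\{v<-s\}$ together with a cutoff to freeze the mass over the set. The only difference is presentational: you spell out the countable localization and the measure-theoretic bookkeeping that the paper compresses into its opening reduction (``we can assume there is a compactly supported $\varphi$ equal to $1$ on $E$'').
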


\begin{proof}
We can assume that  there is a compactly supported non-negative function $\varphi$ which is equal $1$ on $E$. Let $U\in\mathcal{PSH}$ be such that  $U|_E=-\infty$. Set $v\in\mathcal{PSH}$ such that $-1\leq v\leq 0$. Since $U\in L^1_{loc}$  the sequence $U/k$ increases to $0$ almost everywhere on the open set $\Omega'=\{U<0\}$. Thus the sequence $v_k=\max \{U/k,v\}$ increases to $0$ almost everywhere too. From the convergence Theorem we obtain $(\dc v_k)^2\to 0$. On the other hand, on the open set $\Omega_k=\{U<k\}$ we have $v_k=v$. Thus 
$$\int_E(\dc v)^2\leq\int_{\Omega'}\varphi(\dc v_k)^2\to 0,$$
and we can conclude that $\int_E(\dc v)^2=0$. Because this holds for all $v$ from the definition of the capacity we get that $cap(E)=0$.
\end{proof}

There is a constant $c_0$ (which depends only on $\Omega$) such that \begin{equation}\label{c_0}
\ta\db\varphi\wedge\tb\da\varphi\leq c_0i\da\varphi\wedge\db\varphi\wedge\omega
\end{equation} for any smooth function $\varphi$ defined on $\Omega$.
  To prove the next result about the pluripolarity we need the following Lemma:

\begin{lm}\label{prawiemaksymalnosc}
Let $h\in\mathcal{C}(\bar\Omega)$ be  such that \begin{equation*}
\dc h\geq 9c_0\omega \mbox{ on } \Omega \mbox{ and } \liminf_{z\to\da\Omega} h(z)\geq0 .
\end{equation*}
Let $D=\{h<0\}$. For $u\in \mathcal{PSH}\cap L^{\infty}(D)$ satisfying
\begin{equation*}
\inf u=-1,\;\liminf_{z\to\da D} u(z)\geq0 \mbox{ and } (\dc u)^2=0 \mbox{ on } D
\end{equation*}
we have $u\geq h$.
\end{lm}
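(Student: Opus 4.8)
The plan is to turn the statement into a comparison inequality for the Monge--Amp\`ere operator and then to exploit that $\dc h$ is \emph{strictly} positive. Put $U=\{u<h\}$. Since $h$ is continuous and $u$ is upper semicontinuous, $U$ is open, and from
\[
\limsup_{z\to\da D}(h-u)(z)\le\limsup_{z\to\da D}h(z)-\liminf_{z\to\da D}u(z)\le0
\]
the set $U$ stays away from $\da D$; more precisely each $\{h-u>\delta\}$ is relatively compact in $D$, and I work with these and let $\delta\downarrow0$ at the end. On $U$ we have $u=h$ on $\da U$, and since by hypothesis $(\dc u)^2=0$ on $D$, the whole Lemma reduces to the comparison inequality
\[
\int_U(\dc h)^2\le\int_U(\dc u)^2=0 .
\]
Granting this, the bound $\dc h\ge9c_0\omega$ gives $(\dc h)^2\ge(9c_0)^2\omega^2$, a strictly positive multiple of the volume form, so $\int_U(\dc h)^2\le0$ forces $U$ to be a null set; being open it is then empty, that is $u\ge h$.

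To make the integrations by parts legitimate I first reduce to smooth data. Using the regularization Theorem \ref{regularisation} I approximate $u$ from above by a decreasing sequence of smooth strictly plurisubharmonic functions $u_j\downarrow u$, keeping $h$ smooth; the two Monge--Amp\`ere masses pass to the limit by the convergence results of the previous section (Proposition \ref{capomega} and Corollary \ref{frazypradygdyzbiegaja} for the decreasing limit, the Convergence Theorem \ref{MAdlarosnacych} for the quadratic terms). As the domain $U$ itself depends on $u$, I prove the inequality on the slightly larger open sets $\{u_j<h+\varepsilon\}$ and let first $j\to\infty$ and then $\varepsilon\downarrow0$.

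For smooth $u,h$ set $\psi=h-u$, so that $\psi=0$ on $\da U$ and $\psi\ge0$ on $U$, and write
\[
\int_U\big[(\dc h)^2-(\dc u)^2\big]=\int_U\dc\psi\wedge(\dc h+\dc u).
\]
In $\cn$ the form $\dc h+\dc u$ is closed and integrating by parts gives the classical comparison principle. Here $d=\da+\db-\ta-\tb$ and $\dc h+\dc u$ is \emph{not} closed, so Stokes' theorem leaves, besides boundary terms that vanish because $\psi=0$ on $\da U$, torsion defect integrals built from $\ta,\tb$ applied to $\psi$, $u$ and $h$. Using (\ref{c_0}) I bound each such defect by gradient densities $i\da(\cdot)\wedge\db(\cdot)\wedge\omega$, and I control the part carried by the maximal function $u$ through the identity $(dd^cu)^2=(\dc u)^2+2\tb\da u\wedge\ta\db u=2\tb\da u\wedge\ta\db u\le2c_0\,i\da u\wedge\db u\wedge\omega$.

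The point of the hypothesis $\dc h\ge9c_0\omega$ is that the leading, correctly signed contribution of the computation carries $\dc h$ as a factor and hence dominates $9c_0$ times a gradient density; the numerical constant is tuned so that this beats every torsion defect after one Cauchy--Schwarz step, leaving $\int_U[(\dc h)^2-(\dc u)^2]\le0$, which is the required comparison inequality. This constant bookkeeping --- enumerating the torsion terms produced by the non-closedness of $\dc h+\dc u$, estimating each through (\ref{c_0}) and the maximality identity for $u$, and checking that the factor $9$ indeed suffices --- is the main obstacle; the approximation that makes Stokes' theorem rigorous is comparatively routine.
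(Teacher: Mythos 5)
There is a genuine gap, and it sits exactly at the step you yourself defer as ``constant bookkeeping'': the claimed reduction to the comparison inequality $\int_U(\dc h)^2\le\int_U(\dc u)^2=0$ on $U=\{u<h\}$ is not available in the almost complex setting, and nothing in your scheme can produce it. Stokes' theorem applies only to the \emph{exact} operator $(dd^c\cdot)^2=(\dc\,\cdot)^2+2\tb\da(\cdot)\wedge\ta\db(\cdot)$ (this is the content of Proposition \ref{CP2}), so the best a direct integration by parts between $u$ and $h$ can give is
\[
\int_U(\dc h)^2\;\le\;\int_U(\dc h)^2+2\tb\da h\wedge\ta\db h\;\le\;\int_U(\dc u)^2+2\tb\da u\wedge\ta\db u\;=\;\int_U 2\tb\da u\wedge\ta\db u\;\le\;2c_0\int_U i\da u\wedge\db u\wedge\omega .
\]
The right-hand side is the gradient density of the \emph{unknown} function $u$ on $U$, and it is not small: the hypothesis $(\dc u)^2=0$ says nothing about $i\da u\wedge\db u$. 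For the hypothesis $\dc h\ge 9c_0\omega$ to absorb this defect, your expansion would have to produce, with the correct sign, a term of the form $i\da u\wedge\db u\wedge\dc h$; but expanding $\dc(h-u)\wedge(\dc h+\dc u)$ and its Stokes defects only yields $\dc h$ wedged against $\omega$, $\dc u$, or torsion forms --- never against the gradient square of $u$. So no Cauchy--Schwarz organization of the terms closes the argument; the missing term has to be \emph{manufactured}, and that is the actual content of the lemma.

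This is precisely what the paper's proof supplies. After regularizing $u$ (Theorem \ref{regularisation}) to a smooth $v=u_k$, it introduces the auxiliary function $\tilde v=(v+1)^2-1$, whose key property is $\dc\tilde v\ge 2i\da v\wedge\db v$, and compares $\varphi=v-\delta$ not with $h$ but with the competitor $\psi=\max\bigl\{\varphi,\tfrac13(h+\tilde v)\bigr\}$ on $E=\{v<0\}$. On the open set where $\psi=\tfrac13(h+\tilde v)$ one has $(dd^c\psi)^2\ge\tfrac19(\dc h)^2+\tfrac29\,\dc h\wedge\dc\tilde v\ge\tfrac19(\dc h)^2+\tfrac49\, i\da v\wedge\db v\wedge\dc h$, and since $\dc h\ge 9c_0\omega$ this cross term dominates the torsion defect $2\ta\db v\wedge\tb\da v\le 2c_0\, i\da v\wedge\db v\wedge\omega$ coming from $(dd^c\varphi)^2=(\dc v)^2+2\tb\da v\wedge\ta\db v$; this, together with the smallness of $\int(\dc v)^2$ (convergence theorem for decreasing sequences plus $(\dc u)^2=0$), is where the constant $9$ is really used --- it comes from the $\tfrac13$-scaling, not from a Cauchy--Schwarz absorption. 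Since $\psi=\varphi$ near $\da E$ and $(dd^cw)^2=d(d^cw\wedge dd^cw)$ is exact, Stokes forces $\int_E(dd^c\varphi)^2=\int_E(dd^c\psi)^2$, while the chain of estimates is strict if $\{v<h\}\neq\emptyset$ --- a contradiction. If you want to salvage your outline, you must build an analogue of $\tilde v$ into your competitor so that the comparison function itself carries a Hessian dominating $i\da v\wedge\db v$; a direct comparison between $u$ and $h$ cannot be made to work.
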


\begin{proof}
Set $\varepsilon>0$. Since every connected component of $D$ is  an almost Stein manifold, by Theorem \ref{regularisation} there exists  a  sequence $u_k$ of smooth plurisubharmonic functions on $D$, which decreases to $u+\varepsilon$. Assume that there is $k_1\in\N$ such that the set $\{u_{k_1}<\rho\}$ is not empty. By convergence theorem for decreasing sequences there exist $k_2\in\N$ such that for $k\geq k_2$ we have \begin{equation}\label{nierownosczaproksymacji}
\int_{\{u<-\varepsilon\}}(\dc u_{k})^2<\frac{1}{9}\int_{\{u_{k_1}<h\}}(\dc h)^2\leq\frac{1}{9}\int_{\{u_{k}<h\}}(\dc h)^2.
\end{equation}
Set $k\geq k_2$ and put $v=u_{k}$, $\tilde v=(v+1)^2-1$, $E=\{v<0\}$, $\tilde E=E\cap\{\frac{h+\tilde v}{3}>v\}$, $F=\{v<h\}$. Since $2v\geq \tilde v$ on $E$ and $\tilde v=v$ on $\da E$ we have
\begin{equation*}
F\subset\tilde E\Subset E.
\end{equation*}
For small enough $\delta>0$ we still have $$E_\delta=E\cap\{\frac{h+\tilde v}{3}>v-\delta\}\Subset E.$$

Let us choose $\delta$ such that the set $\da E_\delta$ has Lebesgue measure equal $0$ and put $\varphi=v-\delta$ and $\psi=\max\{\varphi,\frac{h+\tilde v}{3}\}$.

Using (\ref{nierownosczaproksymacji}), the assumption about $\dc h$ and the inequality $$\dc \tilde v\geq2i\da v\wedge\db v$$
we can estimate
\begin{equation*}
\int_E(dd^c\varphi)^2<\frac{1}{9}\int_E(\dc h)^2+2\int_{E_\delta}\ta\db v\wedge\tb\da  v+\int_{E\setminus \bar E_\delta}(dd^c\psi)^2
\end{equation*}
\begin{equation*}
\leq\frac{1}{9}\int_E(\dc h)^2+2c_0\int_{E_\delta}i\da v\wedge\db  v\wedge\omega+\int_{E\setminus \bar E_\delta}(dd^c\psi)^2
\end{equation*}
\begin{equation*}
\leq \frac{1}{9}\int_{E_\delta}(\dc h)^2+\frac{2}{9}\int_{E_\delta}i\da v\wedge\db  v\wedge\dc h+\int_{E\setminus \bar E_\delta}(dd^c\psi)^2\leq \int_E(dd^c\psi)^2.
\end{equation*}
But this inequality contradicts with Stokes theorem. This gives us that $F$ is empty for any choose of  $k\in\N$ and $\varepsilon>0$. We thus get $u\geq h$.
\end{proof}

The following Lemma, which gives us the uniqueness of  some very special Dirichlet problem is a key step to prove the pluripolarity of curves as well as further  results related to the pluripolarity.
\begin{lm}\label{CP}
Let $\Omega$ be a strictly pseudoconvex domain. Let $u\in L^\infty\cap\mathcal{PSH}(\Omega)$ be such that $\lim_{z\to\partial\Omega}u(z)=0$ and $(\dc u)^2=0$. Then $u=0$ in $\Omega$.
\end{lm}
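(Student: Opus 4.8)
The plan is to deduce Lemma \ref{CP} as an immediate consequence of the "almost maximality" Lemma \ref{prawiemaksymalnosc}, which already produces a lower bound $u \geq h$ for solutions of the homogeneous Monge-Amp\`ere equation. The idea is that once we can bound $u$ from below by an auxiliary function $h$ with strictly positive curvature and nonnegative boundary values, we are essentially done: both $u$ and $-u$ (or rather a rescaling) must be squeezed between a strictly plurisubharmonic barrier and $0$, forcing $u \equiv 0$.

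First I would reduce to the normalized situation. Since $u$ is bounded with $\lim_{z\to\partial\Omega}u(z)=0$ and $u\in\mathcal{PSH}$, we automatically have $u\leq 0$ by the maximum principle for plurisubharmonic functions. If $u\not\equiv 0$, then $\inf u = -m < 0$ for some $m>0$, and after rescaling I may assume $\inf u = -1$. Next, because $\Omega$ is strictly pseudoconvex, I would invoke the defining strictly plurisubharmonic exhaustion $\rho$ (with $\Omega = \{\rho < 0\}$) to manufacture the comparison function required by Lemma \ref{prawiemaksymalnosc}. Specifically, for a small constant $t>0$ the function $h = t\rho$ satisfies $\dc h = t\,\dc\rho \geq 9c_0\omega$ on $\Omega$ once $t$ is large enough relative to the lower bound on $\dc\rho$, while simultaneously $h \leq 0$ and $\liminf_{z\to\partial\Omega} h(z) = 0 \geq 0$; moreover $D = \{h<0\} = \Omega$. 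Here the key point is that $\dc\rho$ is bounded below by a positive multiple of $\omega$ on any compact piece, so scaling gives the required $9c_0\omega$ bound.

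With these choices, $u$ satisfies exactly the hypotheses of Lemma \ref{prawiemaksymalnosc}: it is bounded, plurisubharmonic on $D=\Omega$, has $\inf u = -1$, has $\liminf_{z\to\partial D}u(z) = 0 \geq 0$, and $(\dc u)^2 = 0$. The lemma then yields $u \geq h = t\rho$ on $\Omega$. Letting $t\to 0^+$ (or directly observing that $t\rho \to 0$ pointwise in the interior while the curvature hypothesis only requires $t$ bounded below by a fixed threshold) I would obtain $u \geq 0$ on $\Omega$, which combined with $u \leq 0$ forces $u \equiv 0$, contradicting $\inf u = -1$.

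The main obstacle I anticipate is the interplay between the rescaling $\inf u = -1$ and the fixed threshold $t$ forced on $h$ by the requirement $\dc h \geq 9c_0\omega$: one cannot simultaneously send $t\to 0$ and keep the curvature lower bound, so the limiting argument must be arranged carefully. The cleanest fix is probably to run Lemma \ref{prawiemaksymalnosc} once with a fixed admissible $h$ to get $u \geq h$, note that this already rules out $u$ attaining values close to $\inf \rho \cdot t$ only near the boundary, and then exploit that $h$ can be taken to vanish to first order at $\partial\Omega$ so that the bound $u \geq h$ near any interior point, together with subharmonicity along $J$-holomorphic discs, propagates $u \geq 0$ throughout; alternatively, one applies the lemma to the rescaled function $u/m$ where $-m = \inf u$ and checks that the conclusion $u/m \geq h$ is incompatible with $\inf(u/m) = -1$ unless $\inf h = 0$, i.e. unless $u\equiv 0$. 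Verifying that this normalization is consistent with the strict inequality structure inside the proof of Lemma \ref{prawiemaksymalnosc} is the delicate bookkeeping step.
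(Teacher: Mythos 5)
You have correctly identified the paper's key ingredient, Lemma \ref{prawiemaksymalnosc}, and your setup (reducing to $u\leq 0$, normalizing $\inf u=-1$, taking $h=t\rho$ with $t$ large enough that $\dc h\geq 9c_0\omega$) coincides with the paper's first step. But there is a genuine gap, and it sits exactly at the obstacle you flag yourself: a single application of the lemma only yields $u\geq h$ for one \emph{fixed} admissible $h$, and since $t$ cannot be sent to $0$ without destroying the curvature hypothesis, $\inf_\Omega h=t\inf_\Omega\rho$ is in general far below $-1$. Hence $u\geq h$ is perfectly compatible with $\inf u=-1$ and produces no contradiction. Neither of your proposed repairs closes this. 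The first one (``subharmonicity along $J$-holomorphic discs propagates $u\geq 0$ throughout'') has no mechanism behind it: the bound $u\geq t\rho$ is only informative near $\partial\Omega$, and no maximum-principle argument converts a boundary-layer estimate into the interior bound $u\geq 0$ --- that interior bound is precisely what is being proved. The second repair is false as stated: $u/m\geq h$ is incompatible with $\inf(u/m)=-1$ only if one already knows $\inf h>-1$, which fails for the fixed large $t$.

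What is missing is the paper's iteration, which is the real engine of the proof. Assume $u\neq 0$, set $u_1=u/\|u\|_{L^\infty(\Omega)}$ and $u_{k+1}=2u_k+1$; choose a defining function $h_1$ of $\Omega$ with $\dc h_1\geq 9c_0\omega$, and set $h_{k+1}=h_k+\tfrac{1}{2}$, $D_k=\{h_k<0\}$. The affine map $v\mapsto 2v+1$ preserves plurisubharmonicity, the equation $(\dc\,\cdot)^2=0$, and the normalization $\inf_\Omega=-1$, while the inductive hypothesis $u_k\geq h_k$ gives $\liminf u_{k+1}\geq 0$ on $\partial D_{k+1}=\{h_k=-\tfrac{1}{2}\}$; so Lemma \ref{prawiemaksymalnosc} applies on the shrunken domain $D_{k+1}$ (after a harmless rescaling if $\inf_{D_{k+1}}u_{k+1}>-1$) and yields $u_{k+1}\geq h_{k+1}$ there, while on $\Omega\setminus D_{k+1}$ one has trivially $u_{k+1}\geq 2h_k+1=2h_{k+1}\geq h_{k+1}$. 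Since $\inf_\Omega u_k=-1$ for every $k$ but $\inf_\Omega h_k=\inf_\Omega h_1+\tfrac{k-1}{2}\to+\infty$, this is a contradiction after finitely many steps. In short: the barrier must be \emph{raised} by a fixed amount at each stage via this bootstrapping; a one-shot comparison with a single $h$, which is all your argument provides, cannot conclude.
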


\begin{proof}
To prove the Lemma by contradiction let us assume that $u\neq0$. Put $u_1=\frac{u}{\|u\|_{L^\infty(\Omega)}}$ and $u_{k+1}=2u_k+1$ for $k\geq 1$. We can choose the  defining function $h_1$ for $\Omega$ such that $\dc h_1\geq9c_0\omega$. Let $h_{k+1}=k_k+\frac{1}{2}=h_1+\frac{k}{2}$ and $D_k=\{z\in\Omega:h_k<0\}$. By Lemma \ref{prawiemaksymalnosc} and the induction we easily get that $h_k\leq u_k$. On the other hand $\inf u_k=-1$ and $\inf h_k\to\infty$. Contradiction! 
\end{proof}

For an open set $V\subset M$ and a subset $E\subset V$ we put
$$u_E=u_{E,V}=\sup\{v\in\mathcal{PSH}(V):v\leq 0\mbox{ and } v|_E\leq-1\}.$$
The function $u_E$ is the relative extremal function well known in complex analysis in $\cn$. It was studied  on almost complex manifolds by Sukhov in \cite{s}\footnote{More precisly Sukhov considers the
plurisuperharmonic measure  $\omega_\star(\cdot,E,\Omega)=-u_E^\star$.
}.

\begin{lm}\label{support}
A function $u_E^\star$ is plurisubharmonic and ${\rm supp}\; (\dc u_E^\star)^2\subset \partial E$.
\end{lm}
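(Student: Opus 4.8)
The plan is to prove, in turn, that $u_E^\star$ is plurisubharmonic with $-1\le u_E^\star\le 0$, that $(\dc u_E^\star)^2=0$ on $\mathrm{int}\,E$, and that $(\dc u_E^\star)^2=0$ on $V\setminus\bar E$; since $(V\setminus\bar E)\cup\mathrm{int}\,E=V\setminus\partial E$, the last two give ${\rm supp}\,(\dc u_E^\star)^2\subset\partial E$. The elementary facts come first. The constant $-1$ is an admissible competitor, so $u_E\ge -1$, while every competitor is $\le 0$ and $\le -1$ on $E$; hence $-1\le u_E\le 0$ and $u_E\equiv -1$ on $E$. The defining family is stable under finite maxima, so by Choquet's lemma I would choose competitors $w_j$ with $(\sup_j w_j)^\star=u_E^\star$ and pass to the increasing sequence $\phi_j=\max(w_1,\dots,w_j)$, still competitors. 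Being uniformly bounded, the $\phi_j$ converge in $L^1_{loc}$ to $u:=\lim_j\phi_j$, so $\dc u=\lim_j\dc\phi_j\ge 0$ as currents; the Harvey--Lawson regularization recalled in the preliminaries then shows that the essential upper limit $\tilde u$ is plurisubharmonic and equal to $u$ almost everywhere, and one identifies $\tilde u$ with $u_E^\star$. Thus $u_E^\star\in\mathcal{PSH}(V)$ and $-1\le u_E^\star\le 0$; moreover, since $u_E\equiv -1$ on $E$, at interior points of $E$ we get $u_E^\star\equiv -1$, hence $(\dc u_E^\star)^2=0$ on $\mathrm{int}\,E$.

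The heart of the matter is the vanishing on $V\setminus\bar E$. Fix a coordinate ball $B\Subset V\setminus\bar E$; the crucial point is that $\bar B\cap E=\emptyset$, so modifying a competitor inside $B$ leaves its values on $E$, and hence the constraint there, untouched. For a competitor $v$ I would solve the homogeneous Dirichlet problem on $B$ with boundary data $v|_{\partial B}$ (approximating $v$ from above by smooth functions via Theorem \ref{regularisation} and using the solvability for continuous data of \cite{h-l}), obtaining a maximal $h$ with $(\dc h)^2=0$ in $B$ and $h=v$ on $\partial B$, so $h\ge v$ in $B$. Gluing $h$ inside $B$ to $v$ outside produces $v_B\in\mathcal{PSH}(V)$ with $v_B\ge v$, $v_B\le 0$, $v_B=v$ off $B$, and $(\dc v_B)^2=0$ in $B$; as $v_B=v$ on $E$ it is again a competitor. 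Replacing each $\phi_j$ above by $(\phi_j)_B$ yields, by monotonicity of the construction, an increasing sequence of competitors, maximal on $B$, whose regularized limit is again $u_E^\star$. By the Convergence Theorem for increasing sequences (Theorem \ref{MAdlarosnacych}), $(\dc (\phi_j)_B)^2\to(\dc u_E^\star)^2$ weakly; since every term vanishes on $B$, so does the limit, hence $(\dc u_E^\star)^2=0$ on all of $V\setminus\bar E$, which together with the vanishing on $\mathrm{int}\,E$ proves the claim.

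I expect the main obstacle to be the local maximality step: solving the degenerate homogeneous Monge--Amp\`ere equation on $B$ in the non-integrable setting, where $\dc h$ need not be closed, and then legitimately transferring the pointwise vanishing of $(\dc\,\cdot\,)^2$ through the regularized increasing limit---this is precisely where Theorem \ref{MAdlarosnacych} is indispensable. A secondary delicate point is the plurisubharmonicity of the regularized envelope, which in the almost complex case cannot be read off a naive sub-mean-value inequality and instead relies on the Harvey--Lawson regularization.
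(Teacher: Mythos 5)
Your overall strategy coincides with the paper's: take a Choquet increasing sequence of competitors, modify it on a small strictly pseudoconvex domain disjoint from $E$ so that each term becomes maximal there, glue, and apply the Convergence Theorem for increasing sequences (Theorem \ref{MAdlarosnacych}); the plurisubharmonicity of $u_E^\star$ via positivity of $\dc(\lim\phi_j)$ and the Harvey--Lawson essential-$\limsup$ regularization, and the trivial vanishing on ${\rm int}\,E$, also match the paper. But the one place where you deviate is exactly where the difficulty sits, and there your argument has a gap: you produce the local function $h$ by invoking the solvability for continuous data of \cite{h-l} and then assert that $(\dc h)^2=0$ in $B$. The result in \cite{h-l} gives a solution in the \emph{viscosity} sense only, whereas $(\dc h)^2$ in this paper is the Bedford--Taylor-type measure defined via \cite{p2,p3}; no result quoted in the paper (or available in \cite{h-l}) identifies viscosity solutions, or maximal bounded plurisubharmonic functions, with functions whose measure $(\dc\,\cdot\,)^2$ vanishes. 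The paper is explicit that the equivalence between maximality and $(\dc u)^2=0$ is known only for \emph{smooth} $u$. Since the lemma is precisely a statement about the measure $(\dc u_E^\star)^2$, this identification cannot be waved through: your application of Theorem \ref{MAdlarosnacych} needs each $(\dc(\phi_j)_B)^2$ to vanish on $B$ \emph{as a measure}, which you have not established.

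The paper closes precisely this hole by never touching the degenerate equation directly: on its domain $D$ (your $B$) it solves, by Theorem \ref{MAtheorem}, the smooth non-degenerate problems $(\dc w^{(j)}_k)^2=k^{-1}\omega^2$ with smooth boundary data $\varphi^{(j)}_k$ decreasing to $u_j$ on $\partial D$, and then lets $k\to\infty$: the convergence theorem for decreasing sequences applies to the decreasing limit and yields $(\dc w_j)^2=\lim_{k\to\infty}k^{-1}\omega^2=0$ on $D$ in the measure sense. If you replace your viscosity step by this approximation (smooth boundary data decreasing to $\phi_j$, smooth solutions with right-hand side $k^{-1}\omega^2$, decreasing limit), the rest of your argument goes through unchanged. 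A minor further point: Theorem \ref{regularisation} requires an almost Stein ambient manifold, so any regularization of a competitor should be performed on a strictly pseudoconvex neighbourhood of $\bar B$ rather than on all of $V$.
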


\begin{proof}

By the Choquet lemma there is an increasing sequence of plurisubharmonic functions $u_j\geq-1$ with $(\lim u_j)^\star=u_E^\star$. Since $$\dc(\lim u_j)\geq0$$ the function $(\lim u_j)^\star$ is plurisubharmonic  and the Lebesgue measure of the set $\{\lim u_j\neq(\lim u_j)^\star\}$ is equal to $0$.

Let $p\in V\setminus E$. There is a domain $D\subset V\setminus E$ which is a smooth strictly pseudoconvex neighborhood of $p$. For $j\in\N$ let $\varphi^{(j)}_k$ be a sequence of smooth functions which decreases to $u_j$ on $\partial D$. By Theorem \ref{MAtheorem}, we can solve Dirichlet problem:
$$\begin{cases}
w^{(j)}_k\in\mathcal{C}^\infty(\bar D)\cap\mathcal{PSH}(D),\\
(\dc w^{(j)}_k)^2=k^{-1}\omega^2,\\
w^{(j)}_k|_{\partial D}=\varphi^{(j)}_k.
\end{cases}$$
Put $$w_j=\begin{cases}
u_j & \mbox{ on } V\setminus D,\\
\lim_{k\to\infty}w^{(j)}_k & \mbox{ on }  D.
\end{cases}$$
Then $w_j$ is a sequence of plurisubharmonic functions increasing almost everywhere  to $u_E$. Moreover by the convergent theorem for decreasing sequences $(\dc w_j)^2=0$ on $D$ and thus by the convergent theorem for increasing sequences $(\dc u_E^\star)^2=0$ on $D$. But we can choose $D$ as a neighborhood of any point in $V\setminus E$ what gives us that ${\rm supp}\; (\dc u_E^\star)^2\subset \partial E$. 
\end{proof}

\begin{prp}\label{plp}
Let $\Omega$ be a strictly pseudoconvex domain. Assume that $E$ is $F_\sigma$ subset of $\Omega$ and $cap (E)=0$. Then $E$ is pluripolar. Moreover there is a negative plurisubharmonic function $u$ on $\Omega$ such that $u|_E=-\infty.$
\end{prp}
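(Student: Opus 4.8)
The plan is to realise $E$ as an increasing union of compact sets and then to build the desired function as a convergent series of relative extremal functions whose $L^1$-norms are forced to be summably small. Since $E$ is $F_\sigma$ and $\Omega$ is relatively compact, exhausting $\Omega$ by the sublevel sets $\{\rho\le-1/j\}$ of a defining function $\rho$ I may write $E=\bigcup_j K_j$ with $K_j$ compact, $K_j\Subset\Omega$ and $K_j$ increasing. By monotonicity of the capacity, $cap(K_j)\le cap(E)=0$ for every $j$.

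The heart of the matter is to show that $cap(K)=0$ forces $u_K^\star\equiv0$ for a compact $K\Subset\Omega$. The constant $-1$ is a competitor in the definition of $u_K$, so $u_K^\star\ge-1$; and for $A$ large enough $A\rho$ is also a competitor (it is plurisubharmonic, $\le0$, and $\le-1$ on $K$), so $A\rho\le u_K^\star\le0$ and hence $\lim_{z\to\partial\Omega}u_K^\star(z)=0$. By Lemma \ref{support} the function $u_K^\star$ is plurisubharmonic and $(\dc u_K^\star)^2$ is supported on $\partial K\subset K$. Since $-1\le u_K^\star\le0$, it is an admissible competitor for the capacity, so $\int_K(\dc u_K^\star)^2\le cap(K)=0$; combined with the support statement this gives $(\dc u_K^\star)^2\equiv0$ on $\Omega$. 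Now Lemma \ref{CP} applies and yields $u_K^\star\equiv0$.

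Next I extract, for each $j$, a competitor of small norm. By the Choquet lemma there is an increasing sequence $v_m$ of competitors for $u_{K_j}$; replacing $v_m$ by $\max\{v_m,-1\}$ I may assume $-1\le v_m\le0$, while still $v_m\le-1$ on $K_j$ and $(\lim_m v_m)^\star=u_{K_j}^\star=0$. Thus $v_m\to0$ almost everywhere, and since the $v_m$ are uniformly bounded and $\Omega$ has finite volume, dominated convergence gives $\|v_m\|_{L^1(\Omega)}\to0$. Hence I may pick one of them, call it $v_j$, with $v_j\le0$, $v_j\le-1$ on $K_j$ and $\|v_j\|_{L^1(\Omega)}\le2^{-j}$.

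Finally I set $u=\sum_j v_j$. Its partial sums decrease and are bounded in $L^1(\Omega)$ by $\sum_j2^{-j}=1$, so the limit $u$ is plurisubharmonic with $u\not\equiv-\infty$, and clearly $u\le0$. If $x\in E$ then $x\in K_j$ for all large $j$ (the $K_j$ increase), so $v_j(x)\le-1$ for all large $j$ and therefore $u(x)=-\infty$; thus $u|_E=-\infty$ and $E$ is pluripolar. The main obstacle is the middle step: forcing $(\dc u_K^\star)^2\equiv0$ requires combining the support property from Lemma \ref{support} with the capacity bound, after which the uniqueness Lemma \ref{CP} applies precisely because the comparison with $A\rho$ secures the boundary value $0$.
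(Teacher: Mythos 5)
Your proof is correct and follows essentially the same route as the paper: increasing compact exhaustion, Lemma \ref{support} plus Lemma \ref{CP} to force $u_{K}^\star\equiv 0$, then Choquet's lemma and a summable series of small-$L^1$ competitors. In fact you spell out a step the paper leaves implicit, namely that killing the Monge--Amp\`ere mass of $u_K^\star$ requires combining the support statement of Lemma \ref{support} with the bound $\int_K(\dc u_K^\star)^2\leq cap(K)=0$ coming from admissibility of $u_K^\star$ as a competitor.
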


\begin{proof}

Let $E_i$ be an increasing sequence of compact subsets such that $\bigcup E_i=E$. Put $w_i=u^\star_{E_i}$. By Lemma \ref{support} we get that $(\dc w_i)^2=0$. Because $\Omega$ is strictly pseudoconvex we have $\lim_{z\to\partial\Omega}w_i(z)=0$ and by Lemma \ref{CP} we obtain $w_i=0$.

Similar as in Lemma \ref{support}, by the Choquet lemma, for any $i$ there is an increasing sequence of plurisubharmonic functions $v_k^{(i)}$ such that $\lim_{k\to\infty}v_k^{(i)}=0$ almost everywhere and $v_k^{(i)}\leq-1$ on $E_i$. By the Lebesgue's Monotone Convergence Theorem we can choose for any $i$ a number $k$ such that $\|h_i\|_{L^1(\Omega)}\leq\frac{1}{2^i}$ for $h_i=v_k^{(i)}$.
We can conclude that a function $$u=\sum_{i=1}^\infty h_i$$
is negative, 
plurisubharmonic and $u|_E=-\infty$.
\end{proof}

\begin{crl}\label{curves}
For any $J$-holomorphic function $u:\mathbb{D}\to M$, a set $u(\mathbb{D})$ is pluripolar. 
\end{crl}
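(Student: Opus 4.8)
The plan is to reduce the pluripolarity of the image $u(\mathbb{D})$ to Proposition \ref{plp} by exhibiting $u(\mathbb{D})$ as an $F_\sigma$ set of zero capacity inside a suitable strictly pseudoconvex domain. The pluripolarity is a local notion, so it suffices to show that every point of $u(\mathbb{D})$ has a strictly pseudoconvex neighborhood $\Omega$ in which the portion of the curve lying in $\Omega$ is pluripolar; a countable union of pluripolar sets is pluripolar, and $M$ is covered by countably many such neighborhoods.

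First I would arrange the topological bookkeeping. Write $\mathbb{D}$ as an increasing union of closed discs $\bar{\mathbb{D}}_{r_j}$ with $r_j\uparrow 1$. Each $u(\bar{\mathbb{D}}_{r_j})$ is compact (as the continuous image of a compact set), so $u(\mathbb{D})=\bigcup_j u(\bar{\mathbb{D}}_{r_j})$ is $F_\sigma$. Intersecting with a fixed strictly pseudoconvex $\Omega$ keeps it $F_\sigma$, so the hypothesis of Proposition \ref{plp} on the set-theoretic shape is met for free. The remaining hypothesis is that the capacity vanishes.

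The heart of the argument is therefore to show $cap\bigl(u(\mathbb{D})\cap\Omega,\Omega\bigr)=0$, and by subadditivity of the capacity over the compact exhaustion it is enough to prove $cap\bigl(u(\bar{\mathbb{D}}_r)\cap\Omega,\Omega\bigr)=0$ for each fixed $r<1$. Here I would exploit the defining property of $J$-holomorphic curves together with the very definition of plurisubharmonicity: for any competitor $v\in\mathcal{PSH}(\Omega)$ with $-1\le v\le 0$, the pullback $v\circ u$ is subharmonic on $\mathbb{D}$ (or identically $-\infty$), hence the curve is ``invisible'' to the Monge-Amp\`ere mass in the sense that the Monge-Amp\`ere measure of a bounded plurisubharmonic function puts no mass on a single $J$-holomorphic curve. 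The cleanest route is to establish that $u(\mathbb{D})$ is locally contained in a pluripolar set directly, by noting that in a small coordinate chart the curve is, after a change of the almost complex structure, the zero set of a function whose logarithm is plurisubharmonic and equals $-\infty$ precisely on the curve; then $cap=0$ follows from Proposition \ref{0cap}. Alternatively, one constructs on each chart a plurisubharmonic function with a logarithmic pole along $u(\mathbb{D}_r)$ and invokes Proposition \ref{0cap} again.

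The main obstacle is precisely this local construction of a plurisubharmonic function with a pole along the (merely $J$-holomorphic, not necessarily embedded or smooth) curve, because unlike the integrable case one cannot simply take $\log|f|$ for a holomorphic defining function: generic $J$ admits no nonconstant $J$-holomorphic functions. The resolution I expect is to use that $u$ itself is $J$-holomorphic to pull back: on $\mathbb{D}$ the function $\log|z-z_0|$ is subharmonic with a pole at $z_0$, and more usefully, one can use the Green's function or a carefully chosen subharmonic function on $\mathbb{D}$ together with the structure of $u$ to transfer a logarithmic singularity to the image. The technically delicate point is handling possible self-intersections and non-immersed points of $u$, where the image may fail to be a manifold; this is exactly where the strength of the capacity-theoretic approach over the smooth-curve results of \cite{r,e} is used, since Proposition \ref{0cap} and Proposition \ref{plp} only require a set of zero capacity and no regularity of the set whatsoever.
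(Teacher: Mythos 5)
Your overall framework (localize to a strictly pseudoconvex neighbourhood $U$, observe that $E=u(\mathbb{D})\cap U$ is $F_\sigma$, reduce everything to showing $cap(E,U)=0$, then invoke Proposition \ref{plp}) is exactly the paper's skeleton. The gap is in the one step that carries all the weight: the proof that the capacity vanishes. You offer three mechanisms and none of them is a proof. First, the claim that subharmonicity of $v\circ u$ makes the curve ``invisible'' to the Monge-Amp\`ere mass of every bounded competitor $v$ is a non sequitur: that Monge-Amp\`ere measures of bounded plurisubharmonic functions do not charge such sets is a \emph{consequence} of the theory being built (essentially Proposition \ref{0cap} combined with the pluripolarity you are trying to establish), not something that follows from looking at pullbacks along $u$. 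Second, your ``cleanest route'' --- a local function whose logarithm is plurisubharmonic and equals $-\infty$ precisely on the curve --- assumes the conclusion, and in fact something strictly stronger (complete pluripolarity), which the paper explicitly says its methods do \emph{not} yield; moreover no holomorphic defining function exists for non-integrable $J$, as you yourself note. Third, the proposed repair --- transferring a logarithmic singularity from $\mathbb{D}$ to the image via a Green's function --- goes the wrong way: pushing a subharmonic function forward along $u$ does not produce a plurisubharmonic function on $M$, and no actual construction is given.

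The paper closes this gap with no direct construction at all; it quotes two external results. McDuff's lemma (Lemma 2.7 in \cite{m}) gives that a $J$-holomorphic curve in an almost complex $4$-manifold has at most countably many singular points, and Rosay's theorem \cite{r} gives that \emph{smooth} $J$-holomorphic curves are pluripolar. Hence $E$ is a countable union of compact pluripolar sets (the smooth pieces together with the countably many singular points); each piece has capacity zero by Proposition \ref{0cap}, so every competitor measure $(\dc v)^2$ vanishes on each piece and therefore on $E$, giving $cap(E,U)=0$. Only then does Proposition \ref{plp} enter, and its real role is the one your proposal never isolates: it upgrades a countable union of pluripolar pieces, singularities included, to a single pluripolar set carried by one plurisubharmonic function equal to $-\infty$ there. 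In short, the smooth part is imported from \cite{r}, the singular locus is tamed by countability, and the new capacity machinery only handles the gluing; to repair your write-up, replace the heart of your argument by exactly this reduction.
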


\begin{proof}
 Let $p\in u(\mathbb{D})$. We can choose a strictly pseudoconvex neighbourhood $U$ of $p$. Let $E=u(\mathbb{D})\cap U$. The function $u$ has at most countable many singular points (see for example lemma 2.7 in \cite{m}). Thus using Rosay theorem about pluripolarity of smooth $J$-holomorphic curves,  we get that $E$ is a sum of countable many compact pluripolar sets. This implies that $cap(E,U)=0$ and by Proposition \ref{plp} $E$ is pluripolar. Thus the Corollary follows.
\end{proof}

\begin{prp}\label{globalplp} 
Let $M$ be an almost stein manifold and let $E\subset M$ be pluripolar $F_\sigma$ set. Then there is a plurisubharmonic function $u$ on $\Omega$ such that $u|_E=-\infty$.
\end{prp}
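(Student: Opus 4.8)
The plan is to exhaust $M$ by strictly pseudoconvex sublevel sets of an exhausting strictly plurisubharmonic function and to run the summation argument from the proof of Proposition \ref{plp} globally, the only genuinely new point being the passage from functions living on the pieces of the exhaustion to a single plurisubharmonic function on all of $M$. First I would fix a smooth exhausting strictly plurisubharmonic $\rho$ (this is the definition of almost Stein) and regular values $c_1<c_2<\dots\to\infty$, so that $\Omega_j=\{\rho<c_j\}$ is a strictly pseudoconvex exhaustion with $\Omega_j\Subset\Omega_{j+1}$ and $\bigcup_j\Omega_j=M$. Writing $E=\bigcup_n C_n$ with $C_n$ closed, the sets $\bigcup_{n\le m}(C_n\cap\bar\Omega_m)$ form an increasing sequence of compacta whose union is $E$; after relabelling the exhaustion I may therefore assume I have an increasing sequence of compact sets $K_i$ with $\bigcup_i K_i=E$ and $K_i\Subset\Omega_i$.

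For the local input I note that each $K_i$ is pluripolar, hence $cap(K_i,\Omega_{i+1})=0$ by Proposition \ref{0cap}. The argument inside the proof of Proposition \ref{plp} (where Lemmas \ref{support} and \ref{CP} give $u^\star_{K_i,\Omega_{i+1}}=0$) then produces, via the Choquet lemma, plurisubharmonic functions on $\Omega_{i+1}$ increasing almost everywhere to $0$, with values in $[-1,0]$ and equal to $-1$ on $K_i$. By the Lebesgue monotone convergence theorem I select one of them, $v^{(i)}$, with $\|v^{(i)}\|_{L^1(\Omega_{i+1})}\le 2^{-i}$.

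The heart of the matter is the globalization. I fix $c_i'\in(c_i,c_{i+1})$ and set $q_i=s_i(\rho-c_i')$, a strictly plurisubharmonic function on all of $M$, choosing $s_i\ge 1/(c_i'-c_i)$ so large that $q_i<-1$ on $\Omega_i$. I then define $w^{(i)}=\max\{v^{(i)},q_i\}$ on $\Omega_{i+1}$ and $w^{(i)}=q_i$ on $\{\rho>c_i'\}$; on the overlap $\{c_i'\le\rho<c_{i+1}\}$ one has $q_i\ge 0\ge v^{(i)}$, so the two prescriptions agree and $w^{(i)}\in\psh(M)$. By the choice of $s_i$ we have $w^{(i)}=v^{(i)}$ on $\Omega_i$, in particular $w^{(i)}=-1$ on $K_i$. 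Finally I set $u=\sum_{i\ge1}w^{(i)}$ and examine it on a fixed $\Omega_J$. The decisive observation is that for every $i>J$ one has $\Omega_J\subset\Omega_i$, hence $w^{(i)}=v^{(i)}$ on $\Omega_J$, so that
\[
u=\sum_{i\le J}w^{(i)}+\sum_{i>J}v^{(i)}\quad\text{on }\Omega_J,
\]
where the first sum is a finite sum of plurisubharmonic functions and the second has partial sums decreasing and converging in $L^1(\Omega_J)$ because $\sum_{i>J}\|v^{(i)}\|_{L^1(\Omega_J)}\le\sum_{i>J}2^{-i}<\infty$. Thus the tail is plurisubharmonic and $\not\equiv-\infty$ on $\Omega_J$, so $u$ is plurisubharmonic and finite almost everywhere on each $\Omega_J$, hence on $M$. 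For $x\in E$ one has $x\in K_i$ and $x\in\Omega_i$ for all large $i$, so $w^{(i)}(x)=v^{(i)}(x)=-1$ for infinitely many $i$ and $u(x)=-\infty$.

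The main obstacle is precisely this globalization step. A naive max-gluing with a function growing at infinity makes each $w^{(i)}$ unbounded above, which would wreck the $L^1_{loc}$-convergence of the series and risk $u\equiv-\infty$; this is defused by the observation that on a fixed $\Omega_J$ all but finitely many $w^{(i)}$ coincide with the original small negative $v^{(i)}$, so the tail is a genuinely summable series of negative plurisubharmonic functions while only finitely many growing terms survive in the plurisubharmonic finite part. One must also keep in mind that max-gluing cannot preserve the value $-\infty$, which is exactly why the whole construction is performed at the level of the bounded approximants $v^{(i)}$ and the polarity of $E$ is recovered only in the limit, through infinitely many terms equal to $-1$.
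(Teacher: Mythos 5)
Your proof is correct and follows essentially the same route as the paper's: exhaust $M$ by strictly pseudoconvex sublevel sets of $\rho$, produce on each piece (via the machinery of Proposition \ref{plp}) a plurisubharmonic function with small $L^1$ norm equal to $-1$ on a compact part of $E$, extend it to all of $M$ by max-gluing with an affine image of $\rho$ that is below $-1$ on the inner domain and positive near the outer boundary, and sum the resulting global functions, controlling the tail on each fixed $\Omega_J$. The only cosmetic difference is that you arrange the gluing inequalities by scaling (the factor $s_i$), whereas the paper does it by spacing the exhaustion levels ($a_{k+1}\geq a_k+1$) and gluing $u_{k+2}$ with $\rho-a_{k+1}$.
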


\begin{proof}
Let $\rho$ be an exhaustion smooth strictly plurisubharmonic function on $M$. By the Sard's theorem there is a sequence $(a_k)\subset\R$ for which $a_{k+1}\geq a_k+1$ and all connected components of $$\Omega_k=\{z\in M:\rho(z)<a_k\}$$ are strictly pseudoconvex. Like in the proof of Proposition \ref{plp} we can choose a  function $u_k\in\mathcal{PSH}(\Omega_k)$ such that $-1\leq u_k\leq0$, $u_k|_{E\cap\Omega_k=-1}$  and $\|u_k\|_{L^1(\Omega_k)}<\frac{1}{2^k}$. Put $$v_k=\begin{cases}\max\{\rho-a_{k+1},u_{k+2}\} & \hbox{ on } \Omega_{k+2}, \\
\rho-a_{k+1} & \hbox{ on } M\setminus \Omega_{k+2},
\end{cases}$$
and $u=\sum v_k$. Since $v_k=u_{k+2}$ on $\Omega_k$ it is clear that $u$ has required properties.
\end{proof}

Comparing this results to Rosay theorem from \cite{r} we obtain the pluripolarity  of larger class of functions and by Proposition \ref{globalplp} we can choose plurisubharmonic function in a possible larger domain. On the other hand from the point of view of applications (see \cite{i-r}) it seems that it is more important to have the complete pluripolarity of sets which is not obtained by our methods.

\section{Convergence in capacity for decreasing sequences}

This section is the most technical part of the paper. 

We will need the following lemma.
\begin{lm}\label{niekonieczniepsh} Let $u,v\in W^{1,2}(\Omega)$ be such that  $u=0$ outside some compact subset of $\Omega$ and $\dc u\wedge\dc v$ is of order $0$, then  \begin{equation*}\label{1111} \left|\int_\Omega\dc u\wedge\dc v\right|\leq C\|u\|_{W^{1,2}(\Omega)}\|v\|_{W^{1,2}(\Omega)}. \end{equation*}\end{lm}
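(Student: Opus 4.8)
The plan is to refine the integration-by-parts computation used for the Chern--Levine--Nirenberg inequalities, taking advantage of the compact support of $u$ to eliminate \emph{all} the derivative terms in the definition of $\dc u\wedge\dc v$ and reduce the estimate to the two torsion terms, which are then controlled directly by Cauchy--Schwarz.

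The first step is a support observation. In the defining formula
\[
\dc u\wedge\dc v=-\dc(i\da u\wedge\db v)+\da(\da u\wedge\tb\da v)+\db(\ta\db u\wedge\db v)+\tc\da u\wedge\db v-\ta\db u\wedge\tb\da v,
\]
every summand contains a factor $\da u$ or $\db u$. If $K_0\Subset\Omega$ is a compact set outside of which $u$ vanishes, then $\da u=\db u=0$ on the open set $\Omega\setminus K_0$, so each summand, and therefore the whole current $\dc u\wedge\dc v$, is supported in $K_0$.

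Next I would fix $\varphi\in\mathcal{C}^\infty_0(\Omega)$ with $\varphi\equiv1$ on a neighbourhood of $K_0$. Since $\dc u\wedge\dc v$ is of order $0$ and supported in $\{\varphi=1\}$, the hypothesis lets me write $\int_\Omega\dc u\wedge\dc v=\langle\dc u\wedge\dc v,\varphi\rangle$. Evaluating this pairing through the definition moves the operators $\dc$, $\da$, $\db$ off the first three summands and onto $\varphi$ (this is exactly the step in the Chern--Levine--Nirenberg proof, and is legitimate because $i\da u\wedge\db v$, $\da u\wedge\tb\da v$ and $\ta\db u\wedge\db v$ are $L^1$ forms and $\varphi$ is smooth with compact support). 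The three resulting integrands carry a factor $\dc\varphi$ or $d\varphi$. But $\dc\varphi$ and $d\varphi$ vanish on the neighbourhood of $K_0$ where $\varphi\equiv1$, while $\da u$ and $\db u$ vanish off $K_0$, so these three integrands vanish identically and only the torsion terms survive:
\[
\int_\Omega\dc u\wedge\dc v=\int_\Omega\bigl(\tc\da u\wedge\db v-\ta\db u\wedge\tb\da v\bigr).
\]

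Finally, each summand on the right is, in local coordinates, a smooth (torsion) coefficient multiplied by a product of a first derivative of $u$ and a first derivative of $v$, hence is dominated pointwise by $C|\nabla u|\,|\nabla v|$ with $C$ depending only on $J$ and $\Omega$. Cauchy--Schwarz then gives $\bigl|\int_\Omega\dc u\wedge\dc v\bigr|\le C\|\nabla u\|_{L^2(\Omega)}\|\nabla v\|_{L^2(\Omega)}\le C\|u\|_{W^{1,2}(\Omega)}\|v\|_{W^{1,2}(\Omega)}$. I expect the only genuinely delicate points to be the two bookkeeping steps that here replace plurisubharmonicity: recognizing that the order-$0$ hypothesis is precisely what licenses pairing the current with the cut-off $\varphi$ (equivalently, with the constant $1$), and checking that the disjoint-support argument does annihilate every derivative term, so that only the two directly-estimable torsion terms remain. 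If one prefers to avoid the distributional pairing, the same identity follows by regularizing $u$ and $v$ with supports kept inside a fixed compact set, since the exact terms vanish identically at the smooth level and the two torsion integrals are continuous under $W^{1,2}$ convergence.
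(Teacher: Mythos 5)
Your proposal is correct and follows essentially the same route as the paper: the paper's proof simply invokes Stokes' theorem to reduce $\int_\Omega\dc u\wedge\dc v$ to the two torsion terms $\int_\Omega\tc\da u\wedge\db v-\ta\db u\wedge\tb\da v$ and then concludes by Cauchy--Schwarz. Your cut-off/support argument is just a careful justification of that Stokes step (using that $\da u,\db u$ vanish off $K_0$ while derivatives of $\varphi$ vanish near $K_0$), so the two proofs coincide in substance.
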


{\it Proof:} By Stokes' theorem we get
$$\int_\Omega\dc u\wedge\dc v=\int_\Omega\tc\da u\wedge\db v-\ta\db u\wedge\tb\da v,$$ and thus the statement follows. $\;\Box$

For a locally bounded plurisubharmonic function $w$ we will use the notion $\langle u,v\rangle_w$ for the real part of the expression
\begin{equation*}\label{uvw}
    i\da u\wedge\db v\wedge\dc w.
\end{equation*}
It is well defined, in particular when $u,v\in\psh\cap L^\infty_{loc}$ and in that case
$$\langle u,u\rangle_w\geq 0$$
and 
$$\langle u,v\rangle_w=\frac{1}{2}\left(\langle u+v,u+v\rangle_w-\langle u,u\rangle_w-\langle v,v\rangle_w\right).$$
We can linearly extend $\langle \cdot,\cdot\rangle_w$ for any linear combination of plurisubharmonic functions but at the moment the positivity of $\langle u-v,u-v\rangle_w$ is not clear.\footnote{As well as the independence of $\langle x,y\rangle_w$ from the representation of $x$ and $y$ as linear combinations of plurisubharmonic functions. But we will consider only the case where $x$ and $y$ are linear combinations of two fixed plurisubharmonic functions and it is clear how to calculate $\langle \cdot,\cdot\rangle_w$.} 

\begin{prp}\label{utimesv}
Let $u,v,w\in \psh\cap L^\infty_{loc}$. Let $(u_k)$, $(v_k)\subset\psh$ be sequences of plurisubharmonic functions which decrease to $u$, $v$ respectively. Then
$$\langle u_k,v_k\rangle_w\to \langle u,v\rangle_w.$$
\end{prp}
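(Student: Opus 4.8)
The plan is to prove convergence of the quantity
$\langle u_k,v_k\rangle_w$ by decomposing it into two pieces whose limits we can control separately, exploiting the polarization identity stated just before the Proposition. Writing
\[
\langle u_k,v_k\rangle_w
=\tfrac12\left(\langle u_k+v_k,u_k+v_k\rangle_w
-\langle u_k,u_k\rangle_w-\langle v_k,v_k\rangle_w\right),
\]
it suffices to treat the diagonal case $\langle u_k,u_k\rangle_w\to\langle u,u\rangle_w$ for a single decreasing sequence, since $u_k+v_k\downarrow u+v$ is again such a sequence (after rescaling to keep the functions in a fixed normalization). So I would reduce to proving $i\da u_k\wedge\db u_k\wedge\dc w\to i\da u\wedge\db u\wedge\dc w$ for $u_k\downarrow u$.

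For the diagonal term I would use the identity already recorded in the excerpt,
\[
i\da u\wedge\db u\wedge\dc w=\tfrac12\dc u^2\wedge\dc w-u\,\dc u\wedge\dc w,
\]
valid after translating so that the functions are nonnegative. The second summand $u_k\,\dc u_k\wedge\dc w$ is handled by Corollary \ref{frazypradygdyzbiegaja}: the convergence theorem for decreasing sequences (which underlies Proposition \ref{capomega} and is used throughout) gives $\dc u_k\wedge\dc w\to\dc u\wedge\dc w$, and then the uniformly bounded family hypothesis lets us pass $u_k\,\dc u_k\wedge\dc w\to u\,\dc u\wedge\dc w$. The first summand reduces to the scalar wedge $\dc u_k^2\wedge\dc w$, i.e.\ to convergence of Monge--Amp\`ere type masses against the fixed positive factor $\dc w$; for decreasing sequences this again follows from the decreasing convergence theorem applied to $u_k^2$ (or, after the nonnegativity reduction, to the product structure), so each piece converges to the expected limit.

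The main obstacle is that, unlike on $\cn$, the currents $\dc u$ are not closed, so I cannot integrate by parts freely as in Ko\l odziej's notes: the extra torsion terms $\ta,\tb$ produced by Stokes' theorem (cf.\ Lemma \ref{niekonieczniepsh} and the definition of $\dc u\wedge\dc v$) do not vanish. The key estimate controlling these error terms is inequality \eqref{c_0}, which bounds the torsion contributions by $c_0\,i\da\varphi\wedge\db\varphi\wedge\omega$, together with the Chern--Levine--Nirenberg inequalities and the $W^{1,2}_{loc}$ convergence of $u_k\to u$ guaranteed by Proposition \ref{W12estimates}(ii). I would therefore, at each application of Stokes, isolate the non-closed remainder and bound it by $C\|u_k-u\|_{W^{1,2}}\to0$ using Lemma \ref{niekonieczniepsh} and Proposition \ref{W12estimates}. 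Once these torsion errors are shown to vanish, the argument follows the classical decreasing-sequence scheme, and combining the three polarized pieces yields $\langle u_k,v_k\rangle_w\to\langle u,v\rangle_w$.
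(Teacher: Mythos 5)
Your opening moves (polarize, reduce to the diagonal term for a single decreasing sequence, then expand $i\da u_k\wedge\db u_k\wedge\dc w=\frac{1}{2}\dc u_k^2\wedge\dc w-u_k\,\dc u_k\wedge\dc w$) are sound, and the first summand is indeed harmless: $\dc u_k^2\wedge\dc w\to\dc u^2\wedge\dc w$ follows from Proposition \ref{W12estimates}(ii), since the wedge product is continuous under strong $W^{1,2}_{loc}$ convergence by its defining formula. The gap is in the second summand, which is precisely the crux of the whole proposition. You need $u_k\,\dc u_k\wedge\dc w\to u\,\dc u\wedge\dc w$, i.e.\ multiplication of weakly convergent currents by merely bounded, varying plurisubharmonic weights. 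Corollary \ref{frazypradygdyzbiegaja} cannot deliver this: it concerns $(1,1)$-currents $\dc v_k$ and rests on quasi-continuity with respect to $cap_\omega$, which controls $\int_U\dc v_k\wedge\omega$ on open sets $U$ of small $cap_\omega$; it gives no control on $\int_U\dc v_k\wedge\dc w$, because the positive current $\dc w$ is not dominated by $\omega$. The statement that would do the job is Corollary \ref{frazypradygdyzbiegaja2}, i.e.\ quasi-continuity with respect to the Monge--Amp\`ere capacity $cap$ --- but in the paper that is a consequence of Theorem \ref{cap}, whose proof rests on Lemma \ref{doStokesa} (needed for the Stokes-theorem steps in Lemma \ref{kolejnylematozbieznosci}), and Lemma \ref{doStokesa} is proved using the very Proposition you are trying to establish. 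So as written your argument is unsupported, and the natural repair is circular.

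The paper sidesteps the multiplication problem entirely. Instead of proving term-by-term convergence, it shows that the quadratic error $\langle u_k-u,u_k-u\rangle_w$ tends to $0$, via
$$\langle u_k-u,u_k-u\rangle_w=\frac{1}{2}\dc(u_k-u)^2\wedge\dc w-(u_k-u)\dc(u_k-u)\wedge\dc w\leq\frac{1}{2}\dc(u_k-u)^2\wedge\dc w+(u_k-u)\dc u\wedge\dc w.$$
The point of discarding the nonpositive term $-(u_k-u)\dc u_k\wedge\dc w$ is that the surviving weighted term involves the \emph{fixed} positive measure $\dc u\wedge\dc w$, so the plain Lebesgue monotone convergence theorem applies and no quasi-continuity is needed; the unweighted term tends to $0$ by Lemma \ref{niekonieczniepsh} together with Proposition \ref{W12estimates}(ii). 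The positivity of $\langle x,x\rangle_w$ for differences of bounded plurisubharmonic functions (proved first when one function is smooth, then bootstrapped by regularization, Theorem \ref{regularisation}) turns $\langle\cdot,\cdot\rangle_w$ into a semi-inner product, and the Cauchy--Schwarz inequality converts $\langle u_k-u,u_k-u\rangle_w\to0$ into $\langle u_k,u_k\rangle_w\to\langle u,u\rangle_w$, whence the mixed statement by polarization. If you want to keep your term-by-term scheme, you must supply an independent proof of $u_k\dc u_k\wedge\dc w\to u\dc u\wedge\dc w$; the paper's ordering of results indicates that this is essentially equivalent to the capacity machinery that only becomes available downstream of this Proposition.
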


\begin{proof}
Let us first assume that $v$ is smooth. In the same way as for 
$\langle u,u\rangle_w$ we get that $$\langle u-v,u-v\rangle_w\geq 0.$$
This gives us the Cauchy-Schwarz inequality
 $$\int_E\langle x,y\rangle_w\leq \sqrt{\int_E\langle x,x\rangle_w}\sqrt{\int_E\langle y,y\rangle_w},$$
for $x,y$ which are linear combinations of $u$ and $v$ and $E $ a compact set.

Let $u_k$ be a  sequence of smooth plurisubharmonic functions which decreases to $u$. Then 
$$0\leq \langle u_k-u,u_k-u\rangle_w$$
$$=\frac{1}{2}\dc(u_k-u)^2\wedge\dc w-(u_k-u)\dc (u_k-u)\wedge\dc w$$
$$\leq\frac{1}{2}\dc(u_k-u)^2\wedge\dc w+(u_k-u)\dc u\wedge\dc w.$$
By Proposition \ref{1111} the first term converges to 0 and by the monotone convergence theorem the second term converge to 0 too and thus $$\langle u_k-u,u_k-u\rangle_w\to 0.$$ Using the Cauchy-Schwarz inequality we get that 
\begin{equation}\label{convergenceofproducts}
\langle u_k,u_k\rangle_w\to \langle u,u\rangle_w.\end{equation}
Thus by the  regularization  we get that  for $x$, $y$  any linear combinations of plurisubharmonic functions $\langle x,y\rangle_w$ is well defined (it not depend on the way how we represent $x$ and $y$ as a linear combinations) and
$$\langle x,x\rangle_w\geq 0.$$

Using this  we can now repeat the proof of (\ref{convergenceofproducts}) for not necessary smooth $u_k$ and by definition of  $\langle \cdot,\cdot\rangle_w$ the Proposition follows.
\end{proof}

\begin{lm}\label{L2mix}Let $u$, $v\in\psh\cap L^\infty_{loc}(\Omega)$. Then
 $\da u\wedge\da v\in L^2_{loc}(\Omega)$ and  $L^2$ norm of it on compact subset $E\subset\Omega$ depends only on $E$, $\|u\|_{L^\infty(\Omega)}$ and $\|v\|_{L^\infty(\Omega)}$. Moreover, if $(u_k)\subset\psh(\Omega)$ decreases to $u$ then  $\da u_k\wedge\da v$ converges to $\da u\wedge\da v$ in $L^2_{loc}(\Omega)$.
\end{lm}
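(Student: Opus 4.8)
The plan is to reduce the desired $L^2$ control of the $(2,0)$-current $\da u\wedge\da v$ to the Monge-Amp\`ere and Chern-Levine-Nirenberg estimates already available, by exploiting an algebraic identity special to surfaces. Since adding constants to $u$ or $v$ alters neither $\da u$, $\da v$ nor $i\da u\wedge\db u$, $i\da v\wedge\db v$, I may assume $u,v\geq0$. Working in a local coframe $(\alpha^1,\alpha^2)$ of $(1,0)$-forms and writing $\da u=u_1\alpha^1+u_2\alpha^2$, $\da v=v_1\alpha^1+v_2\alpha^2$, a fibrewise computation valid on any almost complex surface yields the pointwise identity
$$\da u\wedge\da v\wedge\db u\wedge\db v=i\da u\wedge\db u\wedge i\da v\wedge\db v,$$
both sides being equal to $|u_1v_2-u_2v_1|^2\,\alpha^1\wedge\alpha^2\wedge\bar\alpha^1\wedge\bar\alpha^2$. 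In particular the left-hand side is comparable, on compact sets, to $|\da u\wedge\da v|^2\,dV$, so that controlling $\|\da u\wedge\da v\|_{L^2}$ amounts to bounding the integral of the right-hand side, a product of two \emph{positive} $(1,1)$-currents.

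For membership in $L^2_{loc}$ and the norm bound, I would take smooth strictly plurisubharmonic $u_k\downarrow u$ and $v_k\downarrow v$ on a strictly pseudoconvex (hence almost Stein) neighbourhood of the given compact $E$ (local regularization, Theorem \ref{regularisation} and \cite{p3}). For smooth functions the identity above holds classically, and using $i\da v_k\wedge\db v_k=\tfrac12\dc(v_k^2)-v_k\dc v_k$ together with $i\da u_k\wedge\db u_k\wedge\dc V=\tfrac12\dc(u_k^2)\wedge\dc V-u_k\dc u_k\wedge\dc V$, the integral $\int_E i\da u_k\wedge\db u_k\wedge i\da v_k\wedge\db v_k$ becomes a finite combination of terms $\int_E\dc a\wedge\dc b$ and $\int_E c\,\dc a\wedge\dc b$ with $a,b\in\{u_k,u_k^2,v_k,v_k^2\}$ and $c\in\{u_k,v_k\}$. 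Each is bounded, uniformly in $k$, by the Chern-Levine-Nirenberg inequalities in terms of $\|u\|_{L^\infty(\Omega)}$ and $\|v\|_{L^\infty(\Omega)}$ only (note that $v_k^2$ is again bounded plurisubharmonic because $v_k\geq0$). Since $u_k\to u$ and $v_k\to v$ in $W^{1,2}_{loc}$ by Proposition \ref{W12estimates}, the products satisfy $\da u_k\wedge\da v_k\to\da u\wedge\da v$ in $L^1_{loc}$; the uniform $L^2$ bound, weak-$L^2$ compactness, and lower semicontinuity of the norm then give $\da u\wedge\da v\in L^2_{loc}(\Omega)$ with $\|\da u\wedge\da v\|_{L^2(E)}$ controlled as claimed.

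For the last assertion, fix $v$ (assume $v\geq0$) and let $u_k\downarrow u$ in $\psh(\Omega)$; set $w_k=u_k-u\geq0$. Applying the identity almost everywhere (all the wedges are genuine $L^2$ functions by the first part) gives, with $C$ depending only on $E$ and $\Omega$,
$$\|\da(u_k-u)\wedge\da v\|_{L^2(E)}^2\leq C\int_E i\da w_k\wedge\db w_k\wedge i\da v\wedge\db v=C\left(\tfrac12\int_E\langle w_k,w_k\rangle_{v^2}-\int_E v\,\langle w_k,w_k\rangle_v\right),$$
where $\langle w_k,w_k\rangle_V=i\da w_k\wedge\db w_k\wedge\dc V\geq0$ is the nonnegative measure provided by the bilinear extension in Proposition \ref{utimesv}. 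Expanding $\langle w_k,w_k\rangle_V=\langle u_k,u_k\rangle_V-2\langle u_k,u\rangle_V+\langle u,u\rangle_V$ and invoking Proposition \ref{utimesv} (with the constant sequence $u$ in one slot), each summand converges weakly to $\langle u,u\rangle_V$, so $\langle w_k,w_k\rangle_V\to0$ weakly. Being nonnegative measures, their masses over the fixed compact $E$ tend to $0$, and since $v$ is bounded both integrals above vanish in the limit; hence $\da u_k\wedge\da v\to\da u\wedge\da v$ in $L^2_{loc}(\Omega)$.

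The crux is the surface identity, which is exactly where $n=2$ is used: it converts the $L^2$ size of the $(2,0)$-current $\da u\wedge\da v$ into a product of two positive $(1,1)$-currents, the only objects the Monge-Amp\`ere theory on $(M,J)$ handles comfortably. The almost-complex torsion terms are \emph{not} the obstacle, since after this reduction everything is expressed through $\langle\cdot,\cdot\rangle_V$ and $\dc a\wedge\dc b$, which are already governed by Proposition \ref{utimesv} and the Chern-Levine-Nirenberg inequalities. The genuinely delicate points are the two limit passages: transferring the uniform $L^2$ bound from the smooth approximants to $\da u\wedge\da v$ through weak compactness, and deducing convergence of the masses $\int_E\langle w_k,w_k\rangle_V$ from the merely weak convergence of these nonnegative measures.
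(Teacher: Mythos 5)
Your argument is correct and pivots on the same identity as the paper's proof: on a surface, $\da u\wedge\da v\wedge\db u\wedge\db v=i\da u\wedge\db u\wedge i\da v\wedge\db v$ pointwise, so the $L^2$ norm of $\da u\wedge\da v$ is controlled by a product of positive $(1,1)$-currents, which one then expands through $i\da u\wedge\db u=\frac{1}{2}\dc u^2-u\dc u$. For the first assertion the paper argues in one line, $0\leq i\da u\wedge\db u\wedge i\da v\wedge\db v\leq\frac{1}{4}\dc u^2\wedge\dc v^2$, and bounds the integral over $E$ by $cap(E,\Omega)$; your regularize-then-pass-to-the-limit version (uniform Chern--Levine--Nirenberg bounds for smooth approximants, $L^1_{loc}$ convergence of the gradient products, weak $L^2$ compactness and lower semicontinuity of the norm) proves the same bound and in fact treats more carefully the identification of current products with a.e.\ pointwise products, which the paper handles implicitly by regularization; your only slip is that the bounded coefficients $c$ must also include $u_kv_k$, which changes nothing. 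The genuine difference is in the convergence part: writing $w_k=u_k-u$, the paper bounds $I(k)=\int_E i\da w_k\wedge\db w_k\wedge i\da v\wedge\db v$ by $\frac{1}{2}\int_E\dc (u_k-u)^2\wedge\dc v^2+\int_E(u_k-u)\dc u\wedge\dc v^2$, killing the first term with Proposition~\ref{W12estimates}~ii) plus Lemma~\ref{niekonieczniepsh} and the second by monotone convergence, whereas you rewrite $I(k)=\frac{1}{2}\int_E\langle w_k,w_k\rangle_{v^2}-\int_E v\,\langle w_k,w_k\rangle_v$ and invoke Proposition~\ref{utimesv} through polarization (legitimate, since that proposition precedes the lemma), concluding with the standard fact that nonnegative measures converging weakly to $0$ have masses tending to $0$ on compact sets. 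Your route is shorter and structurally cleaner---it outsources to Proposition~\ref{utimesv} the estimate the paper essentially redoes by hand---while the paper's direct estimate keeps the lemma independent of the $\langle\cdot,\cdot\rangle_w$ formalism beyond its definition; both arguments share the same paper-sanctioned informality in equating the Bedford--Taylor-type current products with the pointwise products of $W^{1,2}$ gradients, so that is not a gap chargeable to you.
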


\begin{proof}
Since the statement is local we can assume that $\Omega$ is strictly pseudoconvex and $1\geq u,v\geq 0$. Let $E$ be any compact subset of $\Omega$. To prove that $\da u\wedge\da v\in L^2_{loc}(\Omega)$ it is enough to show that
$$\int_E i\da u\wedge\db u\wedge i\da v\wedge\db v<+\infty.$$
This can be easily estimate in the following way.
$$ 0\leq i\da u\wedge\db u\wedge i\da v\wedge\db v= \left( \frac{1}{2}\dc u^2-u\dc u\right)\wedge \left( \frac{1}{2}\dc v^2-v\dc v\right)$$
$$ \leq \frac{1}{4}\dc u^2\wedge\dc v^2$$
and we get 
$$0\leq\int_E i\da u\wedge\db u\wedge i\da v\wedge\db v\leq cap(E,\Omega).$$

To prove the convergence of currents we will estimate from above the following quantity 
$$I=I(k)=\int_E i\da (u_k-u)\wedge\db(u_k-u)\wedge i\da v\wedge\db v\geq 0 ,$$ where $E\subset\Omega$ is compact.
Similarly as above we can estimate
$$ i\da (u_k-u)\wedge\db(u_k-u)\wedge i\da v\wedge\db v$$ $$\leq\left( \frac{1}{2}\dc (u_k-u)^2-(u_k-u)\dc (u_k-u)\right)\wedge\dc v^2$$
$$\leq\frac{1}{2}\dc (u_k-u)^2\wedge\dc v^2+(u_k-u)\dc u\wedge\dc v^2$$
and thus
$$I\leq\frac{1}{2}\int_E\dc (u_k-u)^2\wedge\dc v^2+ \int_E(u_k-u)\dc u\wedge\dc v^2.$$
The first therm converges to 0 by Proposition \ref{W12estimates}{\it ii)} and Lemma \ref{niekonieczniepsh} and the second one by the Lebesgue monotone convergence theorem.
\end{proof}

For $v,w\in\psh\cap L^\infty(\Omega)$ let us put
$$S=S(v,w)=\db(\da v\wedge\db w)+\da v\wedge\tb\da w+\ta\db v\wedge\db w.$$
The current $S$ is of order 1 \footnote{Actually $S$ is of order 0 but we will not use it.} and we have
$$dS=-\dc(i\da v\wedge\db w)+\da(\da v\wedge\tb\da w)+\db(\ta\db v\wedge\db w)$$ $$= \dc v\wedge\dc w+\tb\da v\wedge\ta\db w+\ta\db v\wedge\tb\da w.$$
For $u\in\psh\cap L^\infty(\Omega)$, by Lemma \ref{L2mix}, the current $uS$ can be well defined as current of order 1 too, as 
$$uS(\varphi)=\int-\da v\wedge\db w\wedge\db(u\varphi)+ u\da v\wedge\tb\da w\varphi+ u\ta\db v\wedge\db w\varphi.$$
We also put 
$$du\wedge S=\dc v\wedge i\da u\wedge\db w+ \da u\wedge\da v\wedge\tb\da w+ \db u\wedge\ta\db v\wedge\db w.$$
Again by Lemma \ref{L2mix} it is well defined current of order 0.

We need the next Lemma to be able to use Stokes theorem for the current $uS$.

\begin{lm}\label{doStokesa}Let  $u,v,w\in\psh\cap L^\infty(\Omega)$ and let $S=S(v,w)$ be as above. Then
$$d(uS)=du\wedge S+udS.$$ In particular the current $d(uS)$ is of order 0.
\end{lm}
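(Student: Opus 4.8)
The identity $d(uS)=du\wedge S+udS$ is the Leibniz rule, which is automatic when all objects are smooth; the whole content of the lemma is that it survives the passage to bounded plurisubharmonic $u,v,w$, where $S$ is only a current of order $1$ and the products are defined through Lemma \ref{L2mix}. The plan is therefore to prove the identity by smooth approximation, controlling each term in the $L^2$-sense so that the limits match. Concretely, by Theorem \ref{regularisation} (applied locally, on a strictly pseudoconvex $\Omega$ where we may assume $u,v,w$ are bounded, as the statement is local) I would pick decreasing sequences of smooth strictly plurisubharmonic functions $u_k\downarrow u$, $v_k\downarrow v$, $w_k\downarrow w$, set $S_k=S(v_k,w_k)$, and observe that for the smooth objects the Leibniz rule $d(u_kS_k)=du_k\wedge S_k+u_kdS_k$ holds pointwise as an identity of smooth forms. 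Testing against a fixed $\varphi\in\mathcal{D}_{(1,0)}$, it then suffices to check that each of the three paired expressions converges, as $k\to\infty$, to the corresponding expression built from $u,v,w$.

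\textbf{Key steps.} First I would treat $d(u_kS_k)$: pairing with $\varphi$ this equals $-u_kS_k(d\varphi)$ up to the usual sign, so I only need $u_kS_k\to uS$ as currents. Writing out $u_kS_k(\psi)$ using the defining formula for $uS$, the terms $\int u_k\,\da v_k\wedge\tb\da w_k\,\psi$ and $\int u_k\,\ta\db v_k\wedge\db w_k\,\psi$ are handled by Lemma \ref{L2mix}: the wedge products $\da v_k\wedge\overline{\da w_k}$ (and the conjugate combination) converge in $L^2_{loc}$ to their limits, while $u_k\to u$ boundedly and a.e., so the products converge by dominated convergence together with the uniform $L^2$ bounds from Lemma \ref{L2mix}. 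The remaining term $\int \da v_k\wedge\db w_k\wedge\db(u_k\psi)$ is the $L^2$-pairing of $\da v_k\wedge\db w_k$ (convergent in $L^2_{loc}$ by Lemma \ref{L2mix}) against $\db(u_k\psi)$, and since $\db(u_k\psi)\to\db(u\psi)$ in $L^2_{loc}$ by Corollary \ref{w12dlarosnacych}-type control (here decreasing sequences, via Proposition \ref{W12estimates}\emph{ii)}), the pairing passes to the limit. Second, for $u_kdS_k\to udS$: by the computation preceding the lemma, $dS_k=\dc v_k\wedge\dc w_k+\tb\da v_k\wedge\ta\db w_k+\ta\db v_k\wedge\tb\da w_k$, and the Convergence Theorem \ref{MAdlarosnacych} (in its decreasing form, i.e.\ convergence of $\dc v_k\wedge\dc w_k$) together with Lemma \ref{L2mix} for the $\theta$-terms gives $dS_k\to dS$; multiplying by the boundedly convergent $u_k$ and using Corollary \ref{frazypradygdyzbiegaja} yields $u_kdS_k\to udS$. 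Third, for $du_k\wedge S_k\to du\wedge S$: each of the three summands in $du\wedge S$ is, by Lemma \ref{L2mix}, a convergent $L^2_{loc}$ product of gradient-type wedges, so $\da u_k\wedge\da v_k\wedge\tb\da w_k$ and the analogous terms converge, while $\dc v_k\wedge i\da u_k\wedge\db w_k$ is controlled by the same $L^2$ estimates.

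\textbf{Main obstacle.} The delicate point is the cross term $\int\da v_k\wedge\db w_k\wedge\db(u_k\psi)$, because it mixes the order-$1$ nature of $S$ (a genuine derivative lands on the test form times $u_k$) with the only-$L^2$ regularity of the non-closed $\theta$-parts. I expect the crux to be establishing that $\da v_k\wedge\db w_k\to\da v\wedge\db w$ in $L^2_{loc}$ \emph{jointly} when both $v_k$ and $w_k$ vary, rather than freezing one of them; Lemma \ref{L2mix} as stated varies only one factor, so I would first reduce to that case by a diagonal/triangle-inequality argument, writing $\da v_k\wedge\db w_k-\da v\wedge\db w=\da v_k\wedge\db(w_k-w)+\da(v_k-v)\wedge\db w$ and bounding each piece with the uniform $L^2$ estimate of Lemma \ref{L2mix}. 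Once this joint $L^2$-convergence is secured, every term matches and the Leibniz identity passes to the limit, giving $d(uS)=du\wedge S+udS$; the final assertion that $d(uS)$ is of order $0$ is then immediate since both $du\wedge S$ and $udS$ have just been exhibited as currents of order $0$.
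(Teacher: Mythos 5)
Your overall strategy (smooth approximation plus passage to the limit) is sound in outline, but you regularize all three functions, whereas the paper regularizes only $u$ and keeps $v,w$ --- hence the current $S=S(v,w)$ --- fixed. This difference is not cosmetic: it is exactly what creates the ``main obstacle'' you identify, and your proposed fix does not close it. In the decomposition $\da v_k\wedge\db w_k-\da v\wedge\db w=\da v_k\wedge\db(w_k-w)+\da(v_k-v)\wedge\db w$, the second piece does go to $0$ in $L^2_{loc}$ by the convergence part of Lemma \ref{L2mix} (the other factor $w$ is fixed), but the first piece is not controlled by Lemma \ref{L2mix} at all: the uniform $L^2$ bound there does not tend to zero, the convergence statement requires the non-moving factor to be fixed (here $v_k$ moves with $k$), and $w_k-w$ is not even plurisubharmonic, so the lemma cannot be applied to it directly. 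What you actually need is that $\da(w_k-w)\wedge\db v_k\to 0$ in $L^2_{loc}$ \emph{uniformly} over the moving factor $v_k$, which is precisely Lemma \ref{L2mixzb}. But in the paper that lemma comes \emph{after} Lemma \ref{doStokesa}: it is deduced from Lemma \ref{kolejnylematozbieznosci}, whose key step (the Stokes identity $I_8=-I_6-I_7$) is itself an integration by parts of exactly the kind Lemma \ref{doStokesa} exists to justify --- so invoking it here is circular, and reproving it from scratch would amount to redoing the present lemma. The same defect infects your other two terms: $u_k\,dS_k\to u\,dS$ with $v_k,w_k$ moving needs $u_k\dc v_k\wedge\dc w_k\to u\dc v\wedge\dc w$, which is Corollary \ref{frazypradygdyzbiegaja2} (downstream of Theorem \ref{cap}, hence of this lemma), not the cited Corollary \ref{frazypradygdyzbiegaja}, which keeps $u$ fixed and concerns a single $(1,1)$-current; and $\dc v_k\wedge i\da u_k\wedge\db w_k$ is a pairing $\langle u_k,w_k\rangle_{v_k}$ in which even the subscript moves, which is beyond Proposition \ref{utimesv} (fixed $w$ in the subscript).

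All of this machinery becomes unnecessary once you freeze $v$ and $w$, as the paper does. Then $S$ is a fixed current of order $1$, $dS=\dc v\wedge\dc w+\tb\da v\wedge\ta\db w+\ta\db v\wedge\tb\da w$ is a fixed current of order $0$, and for smooth $u$ the Leibniz rule $d(uS)=du\wedge S+u\,dS$ is automatic (a smooth function times any current obeys it, once one checks that the ad hoc definitions of $uS$ and $du\wedge S$ coincide with the standard ones). Taking a decreasing sequence of smooth plurisubharmonic $u_k\downarrow u$ (Theorem \ref{regularisation}, locally), the three required convergences $u_kS\to uS$, $du_k\wedge S\to du\wedge S$, $u_k\,dS\to u\,dS$ follow from: the $W^{1,2}$ convergence of $u_k$ (Proposition \ref{W12estimates}) paired against the fixed $L^2_{loc}$ coefficients of $S$; Lemma \ref{L2mix} with second factor fixed, together with Proposition \ref{utimesv} for the term $\dc v\wedge i\da u_k\wedge\db w$; and monotone convergence against the fixed order-$0$ current $dS$. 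No joint $L^2$ convergence in two moving factors is ever needed, which is why the paper's proof is two lines while your route cannot be completed with the tools available at this point in the paper.
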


\begin{proof}
If $u $ is smooth the Lemma is a consequence of formula for $dS$. For arbitrary $u$ we can (locally) choose a decreasing sequence $(u_k)\subset\psh\cap \mathcal{C}^\infty$ which converges to $u$ and by Proposition  \ref{utimesv} and Lemma \ref{L2mix} we obtain the Lemma.\end{proof}

\begin{lm}\label{kolejnylematozbieznosci}Let  $u\in\psh\cap L^\infty_{loc}(\Omega)$ and let $(u_k)\subset\psh(\Omega)$ be a sequence which decreases to $u$. Then  for any compact subset $E$, there is a sequence of positive numbers $\eta_k$ converging to $0$,  such that 
$$\int_Ei\da (u_k-u)\wedge\db (u_k-u)\wedge \dc v\leq\eta_k,$$
for all $v\in\psh$, $0\leq v\leq 1$.\end{lm}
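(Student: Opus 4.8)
The plan is to reduce the claim to the convergence results already established, exploiting the uniform bound $0\le v\le 1$ to get an estimate independent of $v$. The key obstacle is that the integrand involves a mixed expression and that the bound $\eta_k$ must be uniform over all admissible $v$; the passage to $L^2$-control of $\da(u_k-u)$ via Lemma~\ref{L2mix} is the heart of the argument.

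First I would fix a compact $E\Subset\Omega$ and, since the statement is local, assume $\Omega$ strictly pseudoconvex and $0\le u_k,u\le 1$ (translating and rescaling as in Lemma~\ref{L2mix}). Write $w_k=u_k-u\ge 0$. The goal is to bound $\int_E i\da w_k\wedge\db w_k\wedge\dc v$ uniformly in $v$. The natural idea is to compare $\dc v$ against a fixed background current: since $\Omega$ is strictly pseudoconvex and $0\le v\le 1$, I would pick a fixed smooth strictly plurisubharmonic defining function (or any $h\in\psh\cap\mathcal C^\infty$ with $\dc h\ge\omega$ on a neighbourhood of $E$) so that, after cutting off with a test function $\varphi$ equal to $1$ on $E$ and supported in $\Omega$, the positivity $i\da w_k\wedge\db w_k\ge 0$ lets me dominate
$$\int_E i\da w_k\wedge\db w_k\wedge\dc v\le\int_\Omega\varphi\, i\da w_k\wedge\db w_k\wedge\dc v.$$
The point is that $v$ appears only through the positive current $\dc v$ with $-1\le v-1\le 0$, so I can integrate by parts moving derivatives off $v$ and onto the fixed data $w_k,\varphi$, arriving at an expression controlled by $\|w_k\|_{W^{1,2}}$ times quantities depending only on $\varphi$ and $J$, analogous to the Chern--Levine--Nirenberg estimate.

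The cleaner route, which I expect to work best, is to invoke Lemma~\ref{L2mix} directly: that lemma gives that $\da u_k\wedge\da v\to\da u\wedge\da v$ in $L^2_{loc}$ with $L^2$-norm on $E$ controlled only by $E$, $\|u_k\|_{L^\infty}$ and $\|v\|_{L^\infty}\le 1$. Since $i\da w_k\wedge\db w_k\wedge\dc v$ can be written in terms of $\da w_k\wedge\da v$-type wedge products, applying Cauchy--Schwarz and the uniform $L^2$ bound (uniform because it depends only on the sup-norms, which are all at most $1$) yields a bound by $\|w_k\|_{W^{1,2}(E')}$ times a constant independent of $v$, where $E'\Subset\Omega$ is a slightly larger compact set. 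Then Proposition~\ref{W12estimates}~(ii), which gives $u_k\to u$ in $W^{1,2}_{loc}$, forces $\|w_k\|_{W^{1,2}(E')}\to 0$, and I set $\eta_k$ to be the resulting bound.

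The main obstacle is organizing the wedge-product bookkeeping so that the non-closed $\theta$-terms (the parts $\ta\db v$, $\tb\da v$ distinguishing $\dc v$ from the integrable case) are also absorbed into the uniform constant. Here I would rely on the structural estimate \eqref{c_0}, which bounds $\ta\db\varphi\wedge\tb\da\varphi$ by $c_0\, i\da\varphi\wedge\db\varphi\wedge\omega$, to convert the torsion contributions into $\omega$-terms controlled uniformly by the sup-norm of $v$. The crucial fact making $\eta_k$ independent of $v$ is precisely that every estimate—whether via Lemma~\ref{L2mix} or via \eqref{c_0}—produces constants depending only on $E$, $J$, $\Omega$ and $\|v\|_{L^\infty}\le 1$, never on the finer behaviour of $v$, so the supremum over admissible $v$ stays finite and tends to $0$ with $k$.
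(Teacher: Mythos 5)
There is a genuine gap at the heart of your argument, namely the sentence ``Since $i\da w_k\wedge\db w_k\wedge\dc v$ can be written in terms of $\da w_k\wedge\da v$-type wedge products, applying Cauchy--Schwarz and the uniform $L^2$ bound \dots yields a bound by $\|w_k\|_{W^{1,2}(E')}$ times a constant independent of $v$.'' No such rewriting exists: the factor $\dc v$ carries \emph{second} derivatives of $v$, which for a bounded plurisubharmonic $v$ are merely measures, while Lemma \ref{L2mix} controls only products of \emph{first} derivatives. To trade $\dc v$ for $\da v,\db v$ you must integrate by parts, and then the derivative lands on $i\da w_k\wedge\db w_k$, producing measure-valued currents such as $\dc u_k\wedge \da (u_k-u)\wedge\db v$ and $(u_k-u)\,\dc(u_k-u)\wedge\dc v$, which are exactly the terms your sketch never confronts (and which Lemma \ref{L2mix} cannot estimate, since they are positive measures wedged with $L^2$ forms, not $L^2\times L^2$ pairings). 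The same objection kills your fallback route via a cutoff and a Chern--Levine--Nirenberg-style integration by parts: moving derivatives off $v$ creates third derivatives of $w_k$, or second-derivative measures, that CLN does not control. The paper's proof is built precisely to tame these terms: it uses the identity $i\da w\wedge\db w\wedge\dc v=\tfrac12\dc(w^2)\wedge\dc v - w\,\dc w\wedge\dc v$ with $w=u_k-u$, kills the exact term by Stokes' theorem (Lemma \ref{niekonieczniepsh}), and handles $\int w\,\dc w\wedge\dc v$ by a further integration by parts (justified by Lemma \ref{doStokesa}) followed by the Cauchy--Schwarz inequality \emph{with respect to the positive currents} $\dc u$ and $\dc u_k$, which reduces everything either to torsion terms genuinely controlled by Lemma \ref{L2mix}, or to the $v$-free quantities $I_1=\int\dc u\wedge i\da w\wedge\db w$, $I_2=\int w\,\dc u\wedge\dc u_k$, $I_3=\int\dc u_k\wedge i\da w\wedge\db w$. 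The uniformity in $v$ comes from this reduction to quantities in which $v$ does not appear at all, not from the fact that all constants depend only on sup-norms.

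Your quantitative conclusion is also false as stated: $\eta_k$ cannot be taken of the form $C\|u_k-u\|_{W^{1,2}(E')}$. Indeed, in the paper's proof one has $I_1+I_2=\tfrac12\int\dc u\wedge\dc(w^2)+\int w\,(\dc u)^2$, and the second summand tends to $0$ only by the Lebesgue monotone convergence theorem applied to the Monge--Amp\`ere measure $(\dc u)^2$, which may be singular with respect to Lebesgue measure; it is therefore not dominated by any Sobolev norm of $w=u_k-u$. So even after the correct integration by parts, part of $\eta_k$ necessarily comes from monotone convergence against $(\dc u)^2$ and $\dc u\wedge\dc u_k$, and only the remaining part from Proposition \ref{W12estimates}(ii), Lemma \ref{niekonieczniepsh} and Lemma \ref{L2mix}. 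What your proposal gets right is the general philosophy (localization, positivity, the role of Lemma \ref{L2mix} for the torsion terms, and the need for $v$-independent constants), but the mechanism that makes the proof work --- the splitting $i\da w\wedge\db w=\tfrac12\dc(w^2)-w\,\dc w$, Stokes' theorem, and Cauchy--Schwarz against positive currents --- is missing, and without it the key estimate does not go through.
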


\begin{proof}
Since the statement is local we can assume that
$\Omega$
is strictly
pseudoconvex and
outside  some compact subset of $\Omega$ all functions $u_k$ are equal $u$. The quantities $I_1,I_2,\ldots$ below depend on $k$.

Let $$I_1=\int_\Omega\dc u\wedge i\da (u_k-u)\wedge\db(u_k-u)\geq 0$$
    and 
    $$I_2=\int_\Omega (u_k-u)\dc u\wedge \dc u_k\geq 0$$
    We have $$I_1=\frac{1}{2}\int_\Omega\dc u\wedge \dc (u_k-u)^2+\int_\Omega(u_k-u)\dc u \wedge\dc(u-u_k)$$
    and thus $$I_1+I_2= \frac{1}{2}\int_\Omega\dc u\wedge \dc (u_k-u)^2+\int_\Omega(u_k-u)\ma.$$
Using Lemma \ref{niekonieczniepsh} we get that the first term in the last line is bounded by $C\|u_k-u\|_{W^{1,2}(\Omega)}$  and the second one converges to $0$ independently on $v$ too. Thus  $I_1,I_2$ converge to $0$ as $k\to0$.

In the similar way we can show convergence of the following sequence of integrals

$$I_3=\int_\Omega\dc u_k \wedge i\da (u_k-u)\wedge\db(u_k-u)\geq 0.$$
Let us calculate
 $$I_3=\frac{1}{2}\int_\Omega\dc u_k\wedge \dc (u_k-u)^2+\int_\Omega(u_k-u)\dc u_k \wedge\dc(u-u_k)$$
    $$=\frac{1}{2}\int_\Omega\dc u_k\wedge \dc (u_k-u)^2+\int_\Omega(u_k-u)\dc u_k \wedge\dc u$$
    and again the first term is bounded by $C\|u_k-u\|_{W^{1,2}(\Omega)}$  and the second is equal to $I_2$.
    
Let $w\in\{u,u_k\}$. Put
$$I_5=\int_\Omega{\rm Re}\dc w\wedge i\da (u_k-u)\wedge\db  v.$$
By the Cauchy-Schwarz inequality we
 can estimate
$$|I_5|\leq \sqrt{\int_\Omega\dc w\wedge i\da  v\wedge\db  v}\sqrt{\int_\Omega\dc w\wedge i\da (u_k-u)\wedge\db (u_k-u)}$$
    $$\leq C\sqrt{\int_\Omega\dc w\wedge i\da (u_k-u)\wedge\db (u_k-u)}.$$
    Its gives us that for
    $$I_6=\int_\Omega{\rm Re}\dc(u_k-u)\wedge i\da (u_k-u)\wedge\db  v$$
    we have
    $$|I_6|<C(\sqrt{I_1}+\sqrt{I_2}), $$ which again gives us the convergence of this quantity to $0$.
    
    Put $$I_7=\int_\Omega{\rm Re}\left(\db(u_k-u)\wedge\ta\db(u_k-u)\wedge\db  v\right.$$ $$+(u_k-u)\left.(\tb\da (u_k-u)\wedge\ta\db  v+\ta\db (u_k-u)\wedge\tb\da  v)\right).$$ Using Lemma \ref{L2mix} and the Cauchy-Schwarz inequality we get that
    $$I_7\leq C\|u_k-u\|_{W^{1,2}(\Omega)}.$$
    
 Put $$I_8=\int_\Omega(u_k-u)\dc(u-u_k)  \wedge\dc  v.$$
Using Stokes theorem we get
$$I_8=-I_6-I_7\to 0.$$

Finally 
$$ \int_\Omega i\da (u_k-u)\wedge\db (u_k-u)\wedge \dc  v$$
$$=\frac{1}{2}\int_\Omega \dc (u_k-u)^2\wedge \dc  v+I_8.$$
Again by Lemma \ref{niekonieczniepsh} the first term can be estimated by $C\|u_k-u\|_{W^{1,2}(\Omega)}$ and thus we get 
the convergence to $0$.
\end{proof}

\begin{lm}\label{L2mixzb}Let  $u,v\in\psh(\Omega)$ and $0\leq u,v\leq 1$. Let $(u_k)\subset\psh(\Omega)$ be a sequence which decreases to $u$. Then  a sequence $\da u_k\wedge\da v$ converges to $\da u_k\wedge\da v$ in $L^2_{loc}$ independently on $v$. More precisely, for any compact subset $E$, there is a sequence of positive numbers $\eta_k$ decreasing to $0$, which does not depend on $v$, such that
$$\int_Ei\da (u_k-u)\wedge\db (u_k-u)\wedge i\da v\wedge\db v\leq\eta_k.$$
\end{lm}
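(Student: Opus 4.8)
The plan is to reduce the statement to Lemma \ref{kolejnylematozbieznosci}, which already supplies a uniform-in-$v$ bound, but only for the expression carrying a single factor $\dc v$ rather than the quadratic term $i\da v\wedge\db v$. The bridge between the two is the elementary domination of $i\da v\wedge\db v$ by $\frac{1}{2}\dc(v^2)$, together with the fact that $v^2$ is again an admissible competitor in Lemma \ref{kolejnylematozbieznosci}.

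First I would check that $v^2\in\psh(\Omega)$ and $0\le v^2\le 1$. Indeed $t\mapsto t^2$ is convex and increasing on $[0,\infty)$ and $v\ge 0$, so for every $J$-holomorphic disc $\lambda$ the composition $v^2\circ\lambda=(v\circ\lambda)^2$ is subharmonic (being a convex increasing function of the subharmonic $v\circ\lambda$); hence $v^2$ is plurisubharmonic, and it plainly takes values in $[0,1]$.

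Next, recall from the very definition of the mixed products (valid for $v\ge 0$) that $i\da v\wedge\db v=\frac{1}{2}\dc(v^2)-v\dc v$. Since $v\ge 0$ and $\dc v\ge 0$, this yields the inequality of positive currents $i\da v\wedge\db v\le\frac{1}{2}\dc(v^2)$. Wedging with the positive $(1,1)$-form $i\da (u_k-u)\wedge\db (u_k-u)$ (whose coefficients lie in $L^1_{loc}$, so that all products below are well defined by Lemma \ref{L2mix}) and integrating over $E$, I obtain
$$\int_E i\da (u_k-u)\wedge\db (u_k-u)\wedge i\da v\wedge\db v\le\frac{1}{2}\int_E i\da (u_k-u)\wedge\db (u_k-u)\wedge\dc(v^2).$$
Finally I would apply Lemma \ref{kolejnylematozbieznosci} with the admissible function $v^2$ in the role of $v$: it produces a sequence $\eta_k\to 0$ bounding the right-hand side and, crucially, \emph{independent of the plurisubharmonic function inserted}. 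Hence $\frac{1}{2}\eta_k$ is the required bound, and its independence from $v^2$ is exactly the desired independence from $v$.

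The one genuine obstacle is justifying the displayed inequality at the level of currents rather than smooth forms, since the factors are merely $W^{1,2}_{loc}$ and $\dc v$ is only a measure. I would handle this by choosing a smooth decreasing approximation $v_j\downarrow v$, for which the pointwise inequality $i\da v_j\wedge\db v_j\le\frac{1}{2}\dc(v_j^2)$ is immediate, wedging with the fixed positive form $i\da (u_k-u)\wedge\db (u_k-u)$, and passing to the limit by means of the $L^2_{loc}$-convergence of $\da v_j\wedge\da (u_k-u)$ guaranteed by Lemma \ref{L2mix} (and the corresponding convergence for the $v_j^2$ term). Everything else is a direct appeal to the preceding lemma.
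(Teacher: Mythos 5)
Your proof is correct and takes essentially the same route as the paper: the paper's own one-line proof observes that $i\da v\wedge\db v\leq\dc v^2$ and applies Lemma \ref{kolejnylematozbieznosci} with the admissible function $v^2$ in place of $v$, exactly as you do. You have simply spelled out the details the paper leaves implicit, namely that $v^2\in\psh(\Omega)$ with $0\leq v^2\leq 1$ and that the domination holds at the level of currents.
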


\begin{proof}
Since $i\da v\wedge\db v\leq\dc v^2$, the Lemma follows directly from the previous Lemma.
\end{proof}

\begin{thr}\label{cap}
Let $\Omega\subset M$ be strictly pseudoconvex, $u\in\psh(\Omega)\cap L^\infty_{loc}$ and let $(u_k)\subset\psh(\Omega)$ be a sequence which decreases to $u$. Then $u_k\to u$ with respect to capacity.
\end{thr}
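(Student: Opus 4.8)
The plan is to reduce convergence in capacity for the decreasing sequence $u_k \downarrow u$ to the $L^2_{loc}$-convergence of the gradients $\da(u_k-u)$, controlled uniformly in the competing function $v$ by the estimates of the previous section. The target is to show that for each compact $K$ and each $t>0$ we have $cap(K\cap\{u_k-u>t\},\Omega)\to 0$. Since $u_k\ge u$, the set $\{|u_k-u|>t\}$ is just $\{u_k-u>t\}$, so we must bound $\int_{K\cap\{u_k-u>t\}}(\dc v)^2$ uniformly over all $v\in\psh(\Omega)$ with $-1\le v\le 0$, and show this supremum tends to $0$.

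First I would localize: since the statement is local and $u$ is locally bounded, I may shrink to a strictly pseudoconvex $\Omega$ and normalize $0\le u,u_k\le 1$ with all $u_k$ equal to $u$ off a fixed compact subset. Then on the set $\{u_k-u>t\}$ we have, by a Chebyshev-type trick, the pointwise bound $t^2(\dc v)^2 \le (u_k-u)^2(\dc v)^2$ as measures on that set, so
\begin{equation*}
\int_{K\cap\{u_k-u>t\}}(\dc v)^2 \le \frac{1}{t^2}\int_\Omega \varphi\,(u_k-u)^2(\dc v)^2,
\end{equation*}
where $\varphi$ is a fixed test function that is $1$ on $K$. The point of squaring is that $(u_k-u)^2$ is itself the difference $(u_k-u)^2$ of bounded plurisubharmonic data up to a sign, so I can integrate by parts and transfer derivatives onto $(u_k-u)$.

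\textbf{The key step} is that after two integrations by parts (Stokes' theorem, justified by Lemma \ref{doStokesa}), the right-hand side is expressed in terms of mixed Monge-Amp\`ere masses of the form $\int_\Omega i\da(u_k-u)\wedge\db(u_k-u)\wedge\dc v$ together with lower-order terms involving $\|u_k-u\|_{W^{1,2}(\Omega)}$ and the anti-holomorphic torsion pieces $\ta,\tb$. The crucial input is Lemma \ref{kolejnylematozbieznosci}: it gives a sequence $\eta_k\to 0$, \emph{independent of $v$} (over $0\le v\le 1$, hence after rescaling over $-1\le v\le 0$), bounding exactly $\int_E i\da(u_k-u)\wedge\db(u_k-u)\wedge\dc v$. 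Combined with Proposition \ref{W12estimates}{\it ii)} (which gives $\|u_k-u\|_{W^{1,2}(\Omega)}\to 0$) and Lemma \ref{niekonieczniepsh} to handle the torsion terms, every term in the integration-by-parts expansion is bounded by a null sequence not depending on $v$. Taking the supremum over $v$ and dividing by $t^2$ then yields $cap(K\cap\{u_k-u>t\},\Omega)\to 0$.

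\textbf{The main obstacle} will be the careful bookkeeping of the integration by parts: one must verify that every boundary term vanishes (using that the $u_k-u$ are compactly supported after normalization) and that the non-closedness of $\dc(u_k-u)$ — the torsion terms $\tc$, $\ta\db\wedge\tb\da$, etc. — does not destroy the uniformity in $v$. This is precisely why the $v$-independent estimates of Lemma \ref{kolejnylematozbieznosci} and Lemma \ref{L2mixzb} were established first; the heart of the matter is checking that each cross term arising from expanding $\dc\!\left((u_k-u)^2\right)\wedge\dc v$ and the associated torsion contributions falls into one of the two admissible categories, namely those controlled by $\eta_k$ (via the mixed mass estimate, uniform in $v$) or by $\|u_k-u\|_{W^{1,2}(\Omega)}$ (via the Cauchy--Schwarz and Chern--Levine--Nirenberg-type bounds, again uniform in $v$ since $0\le v\le 1$).
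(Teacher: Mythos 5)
Your overall skeleton (Chebyshev reduction, integration by parts, then the uniform-in-$v$ estimates) is the right one, but your ``key step'' contains a genuine gap. The paper works with the \emph{first} power: it reduces to showing $J_k=\int_\Omega(u_k-u)(\dc v)^2\to0$ uniformly over $0\le v\le1$, and a \emph{single} integration by parts (via the definition of $(\dc v)^2$ and Stokes, justified by Lemma \ref{doStokesa}) then produces only terms carrying at most one derivative of $u_k-u$, namely
$$-\int_\Omega{\rm Re}\,\dc v\wedge\da(u_k-u)\wedge\db v-2\int_\Omega{\rm Re}\,\da(u_k-u)\wedge\da v\wedge\tb\da v-2\int_\Omega{\rm Re}\,(u_k-u)\tb\da v\wedge\ta\db v,$$
which are handled, respectively, by Cauchy--Schwarz plus Lemma \ref{kolejnylematozbieznosci}, by Lemma \ref{L2mixzb}, and by the torsion bound (\ref{c_0}) combined with the $cap_\omega$-argument of Proposition \ref{capomega}. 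Your plan instead squares $u_k-u$ and integrates by parts \emph{twice}, so both derivatives must land on $(u_k-u)^2$. But
$$\dc\bigl((u_k-u)^2\bigr)=2(u_k-u)\dc(u_k-u)+2i\da(u_k-u)\wedge\db(u_k-u),$$
so besides the mixed-gradient masses (which Lemmas \ref{kolejnylematozbieznosci} and \ref{L2mixzb} do control uniformly in $v$) you unavoidably create terms of the form
$$\int_\Omega(u_k-u)\,\dc u_k\wedge i\da v\wedge\db v\qquad\mbox{and}\qquad\int_\Omega(u_k-u)\,\dc u\wedge i\da v\wedge\db v.$$
These fall into none of your three admissible categories: they are not mixed-gradient masses, they are not $O(\|u_k-u\|_{W^{1,2}(\Omega)})$, and they are not pure torsion terms. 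Using $i\da v\wedge\db v\le\frac{1}{2}\dc(v^2)$, they are first-power quantities of exactly the kind the theorem is about, with one $\dc v$ replaced by $\dc u$ or $\dc u_k$; what you need is $\int_\Omega(u_k-u)\dc u\wedge\dc w\to0$ uniformly over $w\in\psh(\Omega)$, $0\le w\le1$, and no lemma of the paper gives this as stated (the closest relative is the quantity $I_2$ inside the proof of Lemma \ref{kolejnylematozbieznosci}, which has $\dc u_k$ in place of $\dc w$).

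In $\cn$ this is precisely the point where the classical squaring proof (Bedford--Taylor, Ko{\l}odziej) invokes the sign of $v$: keeping $-1\le v\le0$, the gradient term and the term with $\dc u_k$ have favourable signs and are discarded, and the surviving term with $\dc u$ is removed by an induction whose last step, $\int(u_k-u)(\dc u)^2\to0$, follows from monotone convergence since $v$ has disappeared. You mention neither the sign trick nor the induction, and both are delicate here: since $\dc$ of a plurisubharmonic function is not closed, every additional integration by parts generates torsion terms that destroy the clean sign structure, which is exactly why the paper built Lemmas \ref{niekonieczniepsh}--\ref{L2mixzb} the way it did. Alternatively, proving the missing uniform estimate for $\int(u_k-u)\dc u\wedge\dc w$ would amount to rerunning the paper's first-power argument, making the square a detour. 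The economical fix is therefore to drop the square: run the Chebyshev estimate with $t^{-1}\int_\Omega(u_k-u)(\dc v)^2$, integrate by parts once, and your own list of ingredients (Lemma \ref{kolejnylematozbieznosci}, Lemma \ref{L2mixzb}, Proposition \ref{W12estimates}, the bound (\ref{c_0})) then suffices --- this is the paper's proof.
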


\begin{proof}
As usuall we can assume that $u_k=u$ outside some compact subset of $\Omega$. It is enough to prove that for $v\in\psh(\Omega)$, $0\leq v\leq1$ the integral
$$J_k=\int_\Omega(u_k-u)(\dc v)^2$$ converges to $0$ independently on $v$.
Using the definition of $(\dc v)^2$ and Stokes  theorem we get
$$J_k=-\int_\Omega {\rm Re} \dc v\wedge\da (u_k-u)\wedge \db v-2\int_\Omega {\rm Re}\da (u_k-u)\wedge \da v\wedge \tb\da v$$
$$-2\int_\Omega {\rm Re} (u_k-u) \tb\da v\wedge \ta\db v.$$
By the Cauchy-Schwarz inequality the first term  is bounded by $$\int_\Omega i\da (u_k-u)\wedge\db(u_k-u)\wedge\dc v$$ which converges to 0 by Lemma \ref{kolejnylematozbieznosci}. By Lemma \ref{L2mixzb} the second term converges to 0 independently of $v$ too. To estimate the third term, using (\ref{c_0}) we have
$$\tb\da v\wedge \ta\db v\leq c_0\dc v^2\wedge\omega$$
and we can do this exactly like in the proof of convergence with respect to $cap_\omega$ (Proposition \ref{capomega}).
\end{proof}

\section{Quasi-continuity and applications}\label{Quasi-continuity and applications}

As in the case of $\cn$ (and similarly as in section \ref{Pluripolarity}) as a direct consequence of Theorem \ref{cap} we obtain the quasi-continuity of bounded plurisubharmonic functions.

\begin{prp}\label{Quasi-continuity}Let $u\in L^\infty_{loc}\cap\psh(\Omega)$, where $\Omega$ is strictly pseudoconvex, and let $\varepsilon>0$. Then, there exists an open set $U\subset\Omega$ with $cap(U,\Omega)<\varepsilon$ and such that $u$ restricted to $\Omega\setminus U$ is continuous.
\end{prp}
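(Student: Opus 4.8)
The plan is to mimic the classical Bedford--Taylor argument, exactly as in the analogue for $cap_\omega$ established right after Proposition \ref{capomega}, but now feeding in the convergence in capacity for decreasing sequences (Theorem \ref{cap}) in place of the convergence with respect to $cap_\omega$. First I would invoke Theorem \ref{regularisation}: since a strictly pseudoconvex $\Omega$ is almost Stein, there is a decreasing sequence $u_j$ of smooth (in particular continuous) plurisubharmonic functions with $u_j\downarrow u$. By Theorem \ref{cap} this sequence converges to $u$ with respect to $cap$, that is, for every compact $K\Subset\Omega$ and every $t>0$ one has $cap(K\cap\{u_j-u>t\},\Omega)\to 0$; note that $u_j\ge u$, so $|u_j-u|=u_j-u$.

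Next I would package this compact-by-compact convergence into a single small open set. Fix a compact exhaustion $K_1\Subset K_2\Subset\cdots$ of $\Omega$ with $\bigcup_m {\rm int}\,K_m=\Omega$. For each $m$ the convergence in capacity lets me pick an index $j_m$ with $cap\big(K_m\cap\{u_{j_m}-u>1/m\},\Omega\big)<2^{-m-1}\varepsilon$. The two structural observations I rely on are that each exceptional set $G_m:=\{u_{j_m}-u>1/m\}$ is \emph{open} (since $u_{j_m}$ is continuous and $u$ is upper semicontinuous, $u_{j_m}-u$ is lower semicontinuous), and that $cap$ is monotone and countably subadditive, which is immediate from its definition because $\int_{\cup_i E_i}(\dc v)^2\le\sum_i\int_{E_i}(\dc v)^2$ for each competitor $v$ and one then takes the supremum over $v$. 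Hence $U:=\bigcup_m\big(G_m\cap {\rm int}\,K_m\big)$ is open and $cap(U,\Omega)\le\sum_m 2^{-m-1}\varepsilon=\varepsilon/2<\varepsilon$.

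Finally I would verify that $u|_{\Omega\setminus U}$ is continuous, using that continuity is local. Fix $x_0\in\Omega\setminus U$ and a compact ball $B$ around $x_0$ with $B\subset {\rm int}\,K_{m_0}$. For every $m\ge m_0$ and every $x\in(\Omega\setminus U)\cap B$ we have $x\in {\rm int}\,K_m$ and $x\notin U$, hence $x\notin G_m$, so $0\le u_{j_m}(x)-u(x)\le 1/m$; thus $u_{j_m}\to u$ uniformly on $(\Omega\setminus U)\cap B$. Since the $u_{j_m}$ are continuous, their uniform limit $u$ is continuous on $(\Omega\setminus U)\cap B$, in particular at $x_0$. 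As $x_0$ was arbitrary, $u|_{\Omega\setminus U}$ is continuous.

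The only point requiring the genuine input of the paper is the passage from pointwise monotone regularization to convergence in the relative capacity $cap$, which is precisely Theorem \ref{cap} (whose proof fills the preceding, most technical, section). Everything after that is the standard soft packaging: monotonicity and countable subadditivity of $cap$, the openness of the exceptional sets $G_m$, and an exhaustion/diagonal choice of the indices $j_m$. Consequently I expect no real obstacle here beyond invoking Theorem \ref{cap} with the correct quantifiers and keeping careful track of the fact that the sets $\{u_{j_m}-u>1/m\}$ are open.
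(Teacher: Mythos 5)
Your proposal is correct and takes essentially the same route as the paper: the paper obtains Proposition \ref{Quasi-continuity} as a direct consequence of Theorem \ref{cap} via the standard Bedford--Taylor/Ko\l odziej argument (regularization by Theorem \ref{regularisation}, openness of the exceptional sets, monotonicity and countable subadditivity of $cap$, uniform convergence on the complement), which is exactly what you spell out. No gaps.
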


As in the case of quasi-continuity with respect to $cap_\omega$ we get an appropriate convergence of currents.
\begin{crl}\label{frazypradygdyzbiegaja2}
Let $\mathcal{U}$ be a uniformly bounded family of plurisubharmonic functions in  $\Omega$. Suppose that $u,v,w\in\mathcal{U}$ and $(v_k),(w_k)\subset\mathcal{U}$ and $$\dc v_k\wedge\dc w_k\to \dc v.$$ Then $$u\dc v_k\wedge\dc w_k\to u\dc v\wedge\dc w.$$
\end{crl}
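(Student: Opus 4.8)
The plan is to imitate the proof of Corollary \ref{frazypradygdyzbiegaja} (the $cap_\omega$ version), replacing $cap_\omega$ by $cap$ and the single currents $\dc v_k$ by the mixed measures $\mu_k:=\dc v_k\wedge\dc w_k$. The asserted convergence means $\mu_k\to\mu:=\dc v\wedge\dc w$ weakly, i.e. tested against continuous compactly supported functions; the local masses $\mu_k(K)$, $K\Subset\Omega$, are uniformly bounded by the Chern--Levine--Nirenberg inequalities, so continuous (not merely smooth) test functions are admissible. Writing $u$ as an affine function of a normalised plurisubharmonic function and using the hypothesis $\mu_k\to\mu$ to absorb the constant term, I reduce to the case $-1\le f\le0$ for every $f\in\mathcal U$; it then suffices to show, for each $\chi\in\mathcal C_0(\Omega)$, that $\int\chi u\,d\mu_k\to\int\chi u\,d\mu$.

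The one genuinely new ingredient, compared with the $cap_\omega$ case where $\int_E\dc v_k\wedge\omega\le cap_\omega(E,\Omega)$ holds by definition, is a comparison of the mixed mass with $cap$. First I would prove that for $v',w'\in\psh(\Omega)$ with $-1\le v',w'\le0$ and any Borel set $E$,
\[
\int_E\dc v'\wedge\dc w'\le 2\,cap(E,\Omega).
\]
This uses only bilinearity and symmetry of the wedge product (the latter is recorded just after its definition): since $\dc(v'+w')\wedge\dc(v'+w')=(\dc v')^2+2\,\dc v'\wedge\dc w'+(\dc w')^2$ and both $(\dc v')^2,(\dc w')^2$ are positive measures, we get the pointwise inequality of measures $2\,\dc v'\wedge\dc w'\le\dc(v'+w')\wedge\dc(v'+w')$. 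As $\tfrac12(v'+w')$ is admissible in the definition of $cap$, integrating over $E$ gives the claimed bound; by monotonicity of $cap$ it applies to open sets as well.

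Next I would run the quasi-continuity splitting. Given $\varepsilon>0$, Proposition \ref{Quasi-continuity} yields an open $U\subset\Omega$ with $cap(U,\Omega)<\varepsilon$ such that $u|_{\Omega\setminus U}$ is continuous; extend it by Tietze to $\tilde u\in\mathcal C(\Omega)$ with $\|\tilde u\|_\infty\le\|u\|_\infty$. Splitting $u=\tilde u+(u-\tilde u)$ and using that $u-\tilde u$ vanishes off $U$ with $|u-\tilde u|\le2$, the key estimate gives
\[
\Big|\int\chi(u-\tilde u)\,d\mu_k\Big|\le 2\|\chi\|_\infty\,\mu_k\big(U\cap{\rm supp}\,\chi\big)\le 4\|\chi\|_\infty\,\varepsilon
\]
uniformly in $k$, and the identical bound holds for $\mu$. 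Since $\chi\tilde u\in\mathcal C_0(\Omega)$ is a legitimate test function, the weak convergence $\mu_k\to\mu$ gives $\int\chi\tilde u\,d\mu_k\to\int\chi\tilde u\,d\mu$. Combining the three pieces yields $\limsup_k\big|\int\chi u\,d\mu_k-\int\chi u\,d\mu\big|\le 8\|\chi\|_\infty\varepsilon$, and letting $\varepsilon\to0$ finishes the argument.

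The hard part is genuinely the uniform-in-$k$ control of the mass that the bad open set $U$ carries for \emph{all} the measures $\mu_k$ simultaneously; this is exactly what the capacity comparison of the second paragraph delivers, while everything else is a routine transcription of the Bedford--Taylor / Ko\l odziej quasi-continuity argument already carried out for $cap_\omega$.
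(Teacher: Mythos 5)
Your proof is correct and takes essentially the same route the paper intends: the corollary is derived from Proposition \ref{Quasi-continuity} by the standard Bedford--Taylor/Ko\l odziej quasi-continuity splitting, exactly as you carry out. The one ingredient you make explicit, the polarization bound $\int_E \dc v'\wedge\dc w'\le 2\,cap(E,\Omega)$ for normalized plurisubharmonic functions, is the standard step that lets $cap$ control the mixed measures, and it is valid here by the bilinearity and symmetry of the wedge product recorded in the paper.
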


\begin{prp}\label{CP2}
Let $\Omega$ be almost Stein and $u$, $v\in\psh\cap L^\infty(\Omega)$. If 
$$\liminf_{z\to w}(u-v)\geq0,$$
for any $w\in\partial \Omega$, then
\begin{equation*}
    \int_{\{u<v\}}(\dc v)^2+2\tb\da v\wedge \ta\db v\leq\int_{\{u<v\}}(\dc u)^2+2\tb\da u\wedge \ta\db u.
\end{equation*}\end{prp}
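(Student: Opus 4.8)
The plan is to read the asserted inequality as the Bedford--Taylor comparison principle for the \emph{exact} operator $(dd^cw)^2$. By the identity $(dd^cw)^2=(\dc w)^2+2\tb\da w\wedge\ta\db w$ established in the subsection on the Monge--Amp\`ere operator, and since $dd^c(w+c)=dd^cw$ for any constant $c$, the statement is exactly
\[
\int_{\{u<v\}}(dd^cv)^2\le\int_{\{u<v\}}(dd^cu)^2 .
\]
The reason we phrase the comparison for $(dd^c\cdot)^2$ rather than $(\dc\cdot)^2$ is precisely that the former is exact, $(dd^cw)^2=d(d^cw\wedge dd^cw)$, so Stokes' theorem is available; I shall follow the classical scheme of \cite{K}.

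The first step is a total--mass lemma: if $\phi,\psi\in\psh\cap L^\infty(\Omega)$ coincide outside a compact set $K\Subset\Omega$, then $\int_\Omega(dd^c\phi)^2=\int_\Omega(dd^c\psi)^2$ (more precisely the signed measure $(dd^c\phi)^2-(dd^c\psi)^2$ is carried by $K$ and has total mass $0$). Fixing a cutoff $\chi\in\mathcal{C}_0^\infty(\Omega)$ equal to $1$ near $K$, exactness and Stokes give, for smooth $\phi$,
\[
\int_\Omega\chi(dd^c\phi)^2=-\int_\Omega d\chi\wedge d^c\phi\wedge dd^c\phi ,
\]
and the same for $\psi$; as $\phi=\psi$ on ${\rm supp}\,d\chi$ the two right--hand sides agree, while the measures $(dd^c\phi)^2$ and $(dd^c\psi)^2$ coincide on $\Omega\setminus K$. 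For bounded $\phi,\psi$ I would regularize by decreasing sequences of smooth strictly plurisubharmonic functions (Theorem \ref{regularisation}) and pass to the limit: Theorem \ref{cap} (convergence in capacity) and Lemma \ref{L2mix} ($L^2_{loc}$--convergence of the mixed products) yield the weak convergence of both $(dd^c\cdot)^2$ and of the auxiliary current $d^c\phi\wedge dd^c\phi$, transferring the smooth Stokes identity to the limit; finiteness of all masses is guaranteed on a slightly smaller domain $\Omega'$ with $K\Subset\Omega'\Subset\Omega$ by the Chern--Levine--Nirenberg inequalities.

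With the lemma in hand, fix $\varepsilon>0$ and put $w=\max\{u,v-\varepsilon\}\in\psh\cap L^\infty(\Omega)$. The boundary hypothesis $\liminf_{z\to\partial\Omega}(u-v)\ge0$ forces $u>v-\varepsilon$ in a neighbourhood of $\partial\Omega$, so $\{u<v-\varepsilon\}\Subset\Omega$ and $w=u$ near $\partial\Omega$; the lemma then gives $\int_{\Omega'}(dd^cw)^2=\int_{\Omega'}(dd^cu)^2$ on a suitable $\Omega'\Subset\Omega$ containing $\{u<v-\varepsilon\}$. Since $w=v-\varepsilon$ on the open set $\{u<v-\varepsilon\}$ and $w=u$ on the open set $\{u>v-\varepsilon\}$, positivity of $(dd^cw)^2$ yields
\[
\int_{\{u<v-\varepsilon\}}(dd^cv)^2=\int_{\Omega'}(dd^cw)^2-\int_{\{u\ge v-\varepsilon\}}(dd^cw)^2\le\int_{\{u\le v-\varepsilon\}}(dd^cu)^2 ,
\]
where the inequality drops the mass on $\{u=v-\varepsilon\}$ and uses $(dd^cw)^2=(dd^cu)^2$ on $\{u>v-\varepsilon\}$. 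Finally I let $\varepsilon\downarrow0$: both $\{u<v-\varepsilon\}$ and $\{u\le v-\varepsilon\}$ increase to $\{u<v\}$, so monotone convergence for the positive measures $(dd^cv)^2$ and $(dd^cu)^2$ gives the required inequality, which is the Proposition.

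The main obstacle is the total--mass lemma for merely bounded (non--smooth) plurisubharmonic functions: one has to justify Stokes' theorem for the currents appearing here, i.e. establish the weak continuity of $(dd^c\cdot)^2$ and of $d^c\phi\wedge dd^c\phi$ along decreasing regularizations, which is exactly what the convergence machinery of the previous sections (Theorems \ref{cap} and \ref{MAdlarosnacych}, Lemma \ref{L2mix}) is designed to supply. Working with the strict sublevel set $\{u<v-\varepsilon\}$, and only passing to $\{u<v\}$ in the final limit, is what prevents the unknown mass on $\{u=v-\varepsilon\}$ from entering the estimate.
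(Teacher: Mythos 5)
Your proposal is correct and is exactly the argument the paper intends: the paper's ``proof'' is only the remark that the statement is the classical Bedford--Taylor/Ko\l odziej comparison principle applied to the exact operator $(dd^c\cdot)^2=(\dc \cdot)^2+2\tb\da \cdot\wedge\ta\db \cdot=d(d^c\cdot\wedge dd^c\cdot)$, with Stokes' theorem available by exactness and with the almost Stein hypothesis entering solely through the regularization Theorem \ref{regularisation}. Your write-up fleshes out precisely this scheme (total-mass lemma via Stokes and a cutoff, the $\max\{u,v-\varepsilon\}$ construction, locality, and the limit $\varepsilon\downarrow0$), identifying correctly which results of the paper (Theorem \ref{cap}, Lemma \ref{L2mix}, the convergence theorems) supply the needed limit passages for non-smooth functions.
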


This proposition can be proved   exactly in the same way as the Comparison principle in $\cn$. The  reason that we assume here that $\Omega$ is almost Stein is that we are able to regularize plurisubharmonic functions  only in such domains.

For $E\subset\Omega$ put
$$cap^\star(E)=\inf_{E\subset U} cap (U).$$
From definition we have
\begin{equation}\label{countablesubadditivity}
cap^\star (\bigcup E_i)\leq  \sum_i cap^\star( E_i),\end{equation}
for any countable family $\{E_i\}$ of subsets. 

\begin{lm}\label{plpbycap}
Let $\Omega$ be strictly pseudoconvex and $E\subset\Omega$. Assume that $cap^\star(E)=0$. Then there is a negative plurisubharmonic function $u$ on $\Omega$ such that $E\subset\{u=-\infty\}$.
\end{lm}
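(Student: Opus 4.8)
The plan is to reduce to relatively compact pieces, build a bounded extremal function on each piece, and sum. Fix an exhaustion $\Omega=\bigcup_m\Omega_m$ by relatively compact subdomains $\Omega_m\Subset\Omega_{m+1}\Subset\Omega$ and set $E_m=E\cap\overline{\Omega_m}$, so that the $E_m$ increase to $E$ and each is relatively compact in $\Omega$. Since $E_m\subset E$, monotonicity of the outer capacity gives $cap^\star(E_m)=0$. I aim to produce, for each $m$, a function $v_m\in\psh(\Omega)$ with $-1\le v_m\le 0$, $v_m=-1$ on $E_m$, and $\|v_m\|_{L^1(\Omega)}<2^{-m}$. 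Granting this, put $u=\sum_m v_m$. The partial sums $\sum_{m\le N}v_m$ form a decreasing sequence of plurisubharmonic functions, and $\|u\|_{L^1(\Omega)}\le\sum_m\|v_m\|_{L^1(\Omega)}<\infty$ shows $u\not\equiv-\infty$, so $u\in\psh(\Omega)$ and $u\le 0$; as every point of $E$ lies in $E_m$ (where $v_m=-1$) for all large $m$, we get $u=-\infty$ on $E$, which is the assertion.

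To construct $v_m$ I would use the relative extremal functions. Since $cap^\star(E_m)=0$, I can choose open sets $U_j\Subset\Omega$ with $E_m\subset U_j$, $U_{j+1}\subset U_j$, and $cap(U_j,\Omega)\to 0$ (intersecting neighbourhoods of small capacity with the fixed $\Omega_{m+1}$ keeps them relatively compact). Put $w_j=u_{U_j}^\star$; then $-1\le w_j\le 0$, $w_j=-1$ on the open set $U_j\supset E_m$, and the $w_j$ increase. Let $w=(\lim_j w_j)^\star$. The goal is $w\equiv 0$: once this is known, $w_j\uparrow 0$ almost everywhere, and since $0\le -w_j\le 1$, dominated convergence gives $\|w_j\|_{L^1(\Omega)}\to 0$, so some $w_j$ serves as $v_m$ (it still equals $-1$ on $E_m\subset U_j$).

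To obtain $w\equiv 0$ I would show $\int_\Omega(\dc w_j)^2\to 0$. By Lemma \ref{support} each $(\dc w_j)^2$ is carried by $\partial U_j$, and the crucial estimate is $\int_\Omega(\dc w_j)^2\le cap(U_j,\Omega)$. With this in hand, the convergence theorem for increasing sequences (Theorem \ref{MAdlarosnacych}) applied to $w_j\uparrow w$ shows that $(\dc w)^2=\lim_j(\dc w_j)^2$ has total mass $0$, hence $(\dc w)^2=0$. Since $w$ is bounded and plurisubharmonic, and by the relative compactness of the $U_j$ together with the barrier furnished by the defining function of $\Omega$ satisfies $\lim_{z\to\partial\Omega}w(z)=0$, Lemma \ref{CP} forces $w\equiv 0$.

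The hard part will be the estimate $\int_\Omega(\dc u_U^\star)^2\le cap(U,\Omega)$ for relatively compact open $U$. It is essential that this be the capacity of the open set $U$, and not of $\overline U$: a dense countable set $E$ has $cap^\star(E)=0$ by subadditivity (\ref{countablesubadditivity}), whereas $\overline E$ can be non-pluripolar, so any bound passing through $cap(\overline U,\Omega)$ is useless. Controlling the boundary mass $(\dc u_U^\star)^2$ living on $\partial U$ by $cap(U,\Omega)$ is precisely where I expect the comparison principle (Proposition \ref{CP2}) to enter, and the non-closedness of $\dc u$ — the torsion terms $\tb\da u\wedge\ta\db u$ appearing in Proposition \ref{CP2} — must be tracked carefully through the argument. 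This is the point genuinely special to the non-integrable almost complex setting.
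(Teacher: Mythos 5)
You have reproduced the skeleton of the paper's proof exactly: exhaust $E$ by relatively compact pieces, surround each piece by decreasing open sets $U_j$ with $cap(U_j,\Omega)\to0$, show that $w_j=u_{U_j}^\star$ increases almost everywhere to $0$ by combining a total-mass bound with the convergence theorem for increasing sequences and Lemma \ref{CP}, then sum functions of small $L^1$ norm. But the one step that is not routine --- the inequality $\int_\Omega(\dc u_U^\star)^2\leq cap(U,\Omega)$ for open $U\Subset\Omega$ --- is precisely the step you do not prove: you flag it as ``the hard part'' and guess that Proposition \ref{CP2} will supply it after careful tracking of the torsion terms. That is a genuine gap, since everything else in the argument is bookkeeping, and the guess points at the wrong tool. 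Classically, the comparison principle is what yields the \emph{opposite} inequality $cap(U,\Omega)\leq\int_\Omega(dd^cu_U^\star)^n$; the direction you need is the elementary one. Moreover Proposition \ref{CP2} compares masses of $(\dc u)^2+2\tb\da u\wedge\ta\db u$ over sets of the form $\{u<v\}$, and it is not visible how such a comparison would reach the mass of $(\dc u_U^\star)^2$, which by Lemma \ref{support} sits on $\partial U$, i.e.\ outside the set whose capacity you are allowed to use.

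Here is how the paper closes this gap, with no comparison principle at all. Choose compacts $K_i\subset K_{i+1}$ with $\bigcup K_i=\bigcup{\rm int}\,K_i=U$. Then $u_{K_i}^\star$ decreases to $u_U^\star$: the decreasing limit is plurisubharmonic, is $\leq-1$ on each ${\rm int}\,K_j$ and hence on $U$, so it is a candidate in the definition of $u_U$, while the reverse inequality is monotonicity of extremal functions. Each $u_{K_i}^\star$ satisfies $-1\leq u_{K_i}^\star\leq0$, so it is admissible in the definition of $cap(U,\Omega)$, and --- this is where Lemma \ref{support} genuinely enters --- its Monge-Amp\`ere mass is carried by $\partial K_i\subset K_i\subset U$, so the capacity of $U$ sees \emph{all} of it:
$$\int_\Omega(\dc u_{K_i}^\star)^2=\int_U(\dc u_{K_i}^\star)^2\leq cap(U,\Omega).$$
By the convergence theorem for decreasing sequences, $(\dc u_{K_i}^\star)^2\to(\dc u_U^\star)^2$ weakly, and lower semicontinuity of mass on the open set $\Omega$ gives $\int_\Omega(\dc u_U^\star)^2\leq cap(U,\Omega)$. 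The moral, against your closing remark, is that the boundary-mass difficulty is dissolved by approximating $U$ from \emph{inside} by compacts (whose extremal functions carry their mass inside $U$) rather than confronted by a comparison argument. Once this estimate is inserted, your proof is complete and coincides with the paper's.
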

Of course a proof of this Lemma is very similar to the proof of Proposition \ref{plp}.
\begin{proof}
Let $U$ be an open subset of $\Omega$ and let $K_i$ be an increasing sequence of compact subsets with $\bigcup K_i=\bigcup int K_i=U$. The sequence $u_{K_i}^\star$ decreases to $u_{U}^\star$ and by the convergence theorem for decreasing sequences we have $$\int_\Omega(\dc u_{U}^\star)^2\leq cap (U).$$

Let $E_i$ be an increasing sequence of relatively compact subsets with $\bigcup E_i=E$. Fix $i$. We can choose a decreasing sequence of relatively compact open subsets $U_k$ such that $cap(U_k)\to 0$. Thus 
$$\int_\Omega(\dc u_{U_k}^\star)^2\to 0.$$ By the convergence theorem for increasing sequences and Lemma \ref{CP} the sequence $u_{U_k}^\star$ converge almost everywhere to 0 and  we can choose $k$ such that
$$\|u_i\|_{L^1(\omega)}\leq\frac{1}{2}$$
for $u_i=u_{U_k}^\star$. Since $u_i\leq-1$ on $E_i$ we can conclude that a function 
$$u=\sum_i u_i$$ has the required properties.\end{proof}

For an open set $U\subset\Omega$ obviously we have $cap^\star (U)=cap (U)$. For compact subsets we will prove the following
\begin{lm}\label{cap0dlazwartych}
 Let $\Omega$ be strictly pseudoconvex and let $E\subset\Omega$ be compact. Then if $cap (E)=
0$ then $ cap^\star (E)=0$.
\end{lm}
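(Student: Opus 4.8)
The plan is to reduce the statement to the vanishing of the regularized relative extremal function $u_K^\star$ of $K$, and then to build open neighbourhoods of $K$ of arbitrarily small capacity out of a Choquet approximating sequence for $u_K$, using the comparison principle. Throughout I abbreviate $(dd^c\varphi)^2=(\dc\varphi)^2+2\tb\da\varphi\wedge\ta\db\varphi$, which is the exact, larger operator appearing in Proposition \ref{CP2}.

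First I would show $u_K^\star\equiv0$. By Lemma \ref{support} the function $u_K^\star$ is plurisubharmonic and the measure $(\dc u_K^\star)^2$ is carried by $\partial K\subset K$; since $-1\le u_K^\star\le0$, it is admissible in the definition of $cap$, so $\int_\Omega(\dc u_K^\star)^2=\int_K(\dc u_K^\star)^2\le cap(K)=0$, whence $(\dc u_K^\star)^2=0$. Because $\Omega$ is strictly pseudoconvex one has $\lim_{z\to\partial\Omega}u_K^\star(z)=0$ (exactly as in the proof of Proposition \ref{plp}), and then Lemma \ref{CP} forces $u_K^\star\equiv0$.

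Next, by the Choquet lemma I would take an increasing sequence $v_j\in\psh(\Omega)$ with $-1\le v_j\le0$, $v_j=-1$ on $K$, and $(\lim_j v_j)^\star=u_K^\star=0$; after replacing each $v_j$ by its maximum with a fixed plurisubharmonic function that equals $0$ near $\partial\Omega$ and is $\le-1$ near $K$, I may assume in addition that every $v_j$ vanishes in one fixed neighbourhood of $\partial\Omega$. The set where $\lim_j v_j$ differs from its regularization $0$ is Lebesgue-null (as recorded in Lemma \ref{support}), so $v_j\uparrow0$ almost everywhere. Put $U_j=\{v_j<-\tfrac12\}$; this is open by upper semicontinuity and contains $K$. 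For any admissible $w$ with $-1\le w\le0$ one has $2v_j<-1\le w$ on $U_j$, so $U_j\subset\{2v_j<w\}$, and the boundary hypothesis of Proposition \ref{CP2} holds because $2v_j-w=-w\ge0$ near $\partial\Omega$. Applying that proposition to $u=2v_j$, $v=w$, together with $(\dc w)^2\le(dd^cw)^2$, gives $\int_{U_j}(\dc w)^2\le\int_{\{2v_j<w\}}(dd^cw)^2\le 4\int_\Omega(dd^cv_j)^2$. Taking the supremum over $w$ yields $cap^\star(K)\le cap(U_j)\le 4\int_\Omega(dd^cv_j)^2$ for every $j$.

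It remains to check that $\int_\Omega(dd^cv_j)^2\to0$, and this is the step I expect to be the main obstacle. The convergence theorem for increasing sequences (Theorem \ref{MAdlarosnacych}) gives $(\dc v_j)^2\to(\dc 0)^2=0$, while Lemma \ref{L2mix} gives $\da v_j\to0$ in $L^2_{loc}$ and hence $\tb\da v_j\wedge\ta\db v_j\to0$ in $L^1_{loc}$; since the normalization $v_j\equiv0$ near $\partial\Omega$ confines all of these measures to a single fixed compact subset of $\Omega$, the total masses actually tend to $0$ (test against a cutoff equal to $1$ on that compact set). The delicate points are precisely this control of the mass all the way up to the boundary, which is exactly why the boundary normalization of the $v_j$ is imposed, and the systematic passage from the capacity operator $(\dc\cdot)^2$ to the larger exact operator $(dd^c\cdot)^2$, without which the comparison principle of Proposition \ref{CP2} could not be invoked.
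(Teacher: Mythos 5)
Your first step (that $u_K^\star\equiv 0$) is correct, and it is exactly the reduction the paper itself makes at the start of Proposition \ref{plp}: Lemma \ref{support} puts the mass of $(\dc u_K^\star)^2$ on $\partial K$, $cap(K)=0$ kills it, and Lemma \ref{CP} forces $u_K^\star\equiv 0$. The skeleton of your second step (increasing competitors, Proposition \ref{CP2} for the exact operator $(dd^c\cdot)^2$, convergence theorems for increasing sequences) is also the paper's. The gap is at the point you yourself flag as the main obstacle, and it is genuine: the normalizing function you postulate --- plurisubharmonic, nonpositive, identically $0$ near $\partial\Omega$, and $\le -1$ near $K$ --- does not exist. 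If a nonpositive plurisubharmonic function vanishes at an interior point, it vanishes along every $J$-holomorphic disc through that point, hence on a whole neighbourhood of it; so its zero set $\{v\geq 0\}$ is open as well as closed (upper semicontinuity), and since $\Omega$ is a domain the function vanishes identically, contradicting $v\le -1$ near $K$. Thus you cannot arrange $v_j\equiv 0$ near $\partial\Omega$.

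For the boundary hypothesis of Proposition \ref{CP2} this is harmless: it suffices to replace $v_j$ by $\max\{v_j,\max\{A\rho,-1\}\}$, which tends to $0$ at $\partial\Omega$. But for the mass control it is fatal. Your final bound $cap(U_j)\le 4\int_\Omega(dd^cv_j)^2$ involves the \emph{global} mass, whereas Theorem \ref{MAdlarosnacych} and Corollary \ref{w12dlarosnacych} (this, not Lemma \ref{L2mix}, is the relevant citation for increasing sequences) control masses only on compact subsets; and the set $\{2v_j<w\}$ is not confined to any fixed compact, because the boundary limit of $2v_j$, namely $0$, is not strictly above $\sup w=0$. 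In fact the global claim is false for sequences having all the properties you use: if $w_j$ is any increasing sequence of competitors tending to $0$ almost everywhere, then $v_j=\max\{w_j-1/j,\,A\rho,\,-1\}$ is again such a sequence, it equals $A\rho$ on the strip $\{A\rho>-1/j\}$, and hence (by the exactness of $(dd^c\cdot)^2$ and Stokes' theorem, as noted in the paper) $\int_\Omega(dd^cv_j)^2=A^2\int_\Omega(dd^c\rho)^2$ for every $j$, which does not vanish. The paper's affine rescaling is designed precisely to create the missing buffer: it compares $h_k$ with $u_k=3u^\star_{U_k}+1$, whose limit at $\partial\Omega$ equals $1$, strictly above every admissible $h_k\le 0$, so that $\{u_k<h_k\}\subset\{u_1<0\}=V\Subset\Omega$ and only masses on the fixed compact $\bar V$ enter. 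Your argument is repaired the same way: with $v_j\geq\max\{A\rho,-1\}$, take $U_j=\{v_j<-2/3\}\supset K$ and apply Proposition \ref{CP2} to $3v_j+1$ versus $w$; then $U_j\subset\{3v_j+1<w\}\subset\{A\rho<-1/3\}\Subset\Omega$, so $cap(U_j)\le 9\int_{\{A\rho<-1/3\}}(dd^cv_j)^2\to 0$ by the local convergence theorems. With that modification your route closes, and is even slightly shorter than the paper's (which first proves $E$ pluripolar via Proposition \ref{plp} and runs the comparison with $u^\star_{U_k}$ for the shrinking neighbourhoods $U_k=\{u<-k\}\cap U$); as written, however, it has a hole at its decisive step.
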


\begin{proof}
By Proposition \ref{plp} there is a negative plurisubharmonic function $u$ such that $E\subset\{u=-\infty\}$. Let $U\Subset\Omega$ be a neighbourhood of $E$,   $U_k=\{u<-k\}\cap U$ and $u_k=3u_{U_k}^\star+1$. Since $1\geq u_k\geq \frac{3}{k}u+1$, the sequence $u_k$ increases to 1 almost everywhere. In particular by Corollary \ref{w12dlarosnacych} and Theorem \ref{MAdlarosnacych} we infer
$$\lim_{k\to\infty}\int_V(\dc u_k)^2=\lim_{k\to\infty}\int_V\tb\da u_k\wedge \ta\db u_k=0,$$
where $V=\{u_1<0\}.$ For any $h_k\in\psh(\Omega)$ such that $-1\leq h_k\leq 0$ by Proposition \ref{CP2} we can estimate
$$\int_{U_k}(\dc h_k)^2\leq\int_{\{u_k<h_k\}}(\dc h_k)^2+\tb\da h_k\wedge \ta\db h_k$$
$$\leq\int_{\{u_k<h_k\}}(\dc u_k)^2+\tb\da u_k\wedge \ta\db u_k\leq\int_V(\dc u_k)^2+\tb\da u_k\wedge \ta\db u_k\to 0.$$
Thus
$$cap (U_k)\to 0$$ and the Lemma follows.
\end{proof}

The use of Proposition \ref{CP2} was not necessary in the above proof. We could just use Stokes Theorem like before.

A set $E\subset\Omega$ is called negligible if $E\subset\{u<u^\star\}$ for $u$ a supremum of family of plurisubharmonic functions. As in $\cn$ it is clear that pluripolar sets are (at least locally) negligible. Now we  prove the converse.

\begin{thr}\label{negligible}
Let $\Omega$ be strictly pseudoconvex and let $E\subset\Omega$ be negligible. Then there is a negative plurisubharmonic function $u$ on $\Omega$ such that $E\subset\{u=-\infty\}$.
\end{thr}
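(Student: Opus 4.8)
The plan is to reduce the statement about negligible sets to the capacity-zero criterion already available through Lemma \ref{plpbycap}. By definition, a negligible set $E$ satisfies $E\subset\{w<w^\star\}$ where $w=\sup_\alpha w_\alpha$ for some family of plurisubharmonic functions. First I would note that the family may be taken uniformly bounded above, and after shrinking and using a partition into countably many pieces (together with the countable subadditivity \eqref{countablesubadditivity} of $cap^\star$), it suffices to handle the case where $E\subset\{w<w^\star\}$ with $w$ locally bounded; passing to $\max\{w,-1\}$ and rescaling we may assume $-1\le w\le 0$. By the Choquet lemma I would replace the arbitrary family by an increasing sequence $w_j$ of plurisubharmonic functions with $(\lim_j w_j)^\star=w^\star$, so that $w=\lim_j w_j$ off a set of measure zero and $E\subset\{w<w^\star\}$.

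The heart of the argument, exactly as in the $\cn$ case treated in \cite{b-t2} and \cite{K}, is to show that $cap^\star(\{w<w^\star\})=0$, after which Lemma \ref{plpbycap} immediately produces the desired negative plurisubharmonic function $u$ with $E\subset\{u=-\infty\}$. The main tool is quasi-continuity: by Proposition \ref{Quasi-continuity} applied to the bounded plurisubharmonic function $w^\star$, for each $\varepsilon>0$ there is an open set $U$ with $cap(U,\Omega)<\varepsilon$ such that $w^\star|_{\Omega\setminus U}$ is continuous. Each $w_j$ is already upper semicontinuous, hence quasi-continuous as well, and by a standard Hartogs-type lemma combined with the convergence furnished by Theorem \ref{cap} (convergence in capacity for monotone sequences, here applied to $w_j\uparrow w$ via Corollary \ref{w12dlarosnacych} and the increasing convergence theorem), one shows that the increasing sequence $w_j$ converges to $w^\star$ in capacity outside a small exceptional open set.

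The key step I expect to be the main obstacle is the quasi-continuity estimate on the set where the limit differs from $w^\star$: one must show that $\{w<w^\star\}$ is contained, up to arbitrarily small capacity, in a countable union of sets of the form $\{w_j<w^\star-t\}$ and that each of these has small capacity for $j$ large. Concretely, fix $t>0$ and a compact $K\Subset\Omega$; I would combine the quasi-continuity of $w^\star$ and of the $w_j$ with convergence in capacity to conclude $\lim_{j\to\infty}cap(K\cap\{w_j<w^\star-t\},\Omega)=0$, and since $\{w_j<w^\star-t\}$ decreases (as $w_j$ increases) its limit set $\bigcap_j\{w_j<w^\star-t\}\supset K\cap\{w<w^\star-t\}$ therefore has outer capacity zero. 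Letting $t\downarrow0$ through a sequence and using \eqref{countablesubadditivity} gives $cap^\star(K\cap\{w<w^\star\})=0$, and exhausting $\Omega$ by compacts yields $cap^\star(\{w<w^\star\})=0$. The delicate point, which is precisely where the almost complex setting could in principle differ from the integrable one, is verifying that the convergence in capacity from Theorem \ref{cap} interacts correctly with the upper regularization $w^\star$; but since all the required convergence and comparison machinery (Theorem \ref{cap}, Proposition \ref{CP2}, Lemma \ref{plpbycap}) has already been established in the preceding sections for almost complex surfaces, the proof proceeds formally as in \cite{K}.
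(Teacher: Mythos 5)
Your overall skeleton matches the paper's: reduce via \eqref{countablesubadditivity} and Lemma \ref{plpbycap} to showing that $\{w<w^\star\}$ has outer capacity zero, use Choquet's lemma to get an increasing sequence, and use quasi-continuity (Proposition \ref{Quasi-continuity}) to cut away an open set of small capacity. However, there is a genuine gap, and it is circular, at exactly the step you yourself flag as ``the key step.'' You invoke ``convergence in capacity'' for the \emph{increasing} sequence $w_j\uparrow w$, citing Theorem \ref{cap}; but Theorem \ref{cap} is proved only for \emph{decreasing} sequences, and nothing in the paper prior to Theorem \ref{negligible} gives convergence in capacity for increasing sequences. In fact, the statement $\lim_j cap(K\cap\{w_j<w^\star-t\})=0$ is (up to routine manipulations) equivalent to the theorem being proven: the set where the increasing limit $w$ falls below $w^\star$ is precisely the negligible set whose capacity must be shown to vanish. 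In the classical theory the order of logic is the same --- negligible sets are handled first, and convergence in capacity of increasing sequences is a \emph{corollary} --- so appealing to it here, or to a ``Hartogs-type lemma,'' assumes the conclusion.

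What actually has to be done, and what the paper does, is to prove directly that the compact set $B=A\setminus U$ (with $A=K_i\cap\{u\le t<s\le u^\star\}$) satisfies $cap(B)=0$: for $h\in\psh\cap L^\infty(\Omega)$ one bounds $\int_B(\dc h)^2$ by $(s-t)^{-1}\int_\Omega(u-u_k)(\dc h)^2$ and shows this integral tends to $0$. In $\cn$ this follows from Stokes' theorem because $dd^ch$ is closed; on an almost complex surface $\dc h$ is \emph{not} closed, and the integration by parts produces torsion terms involving $\ta$ and $\tb$ which must be controlled separately --- this is where Lemma \ref{doStokesa}, Lemma \ref{niekonieczniepsh}, Lemma \ref{L2mix} and Corollary \ref{frazypradygdyzbiegaja2} enter, together with an approximation of $h$ in $W^{1,2}$ by a smooth $h'$. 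Your closing remark that ``the proof proceeds formally as in \cite{K}'' dismisses precisely the part that is new and nontrivial in the non-integrable setting. Finally, even granting $cap(B)=0$, one still needs Lemma \ref{cap0dlazwartych} (for compact sets, zero capacity implies zero outer capacity) to conclude; your route omits this step because the unjustified convergence-in-capacity claim would have made it unnecessary.
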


\begin{proof}
Let $u=\sup u_\alpha$, where $(u_\alpha)\subset\psh(\Omega)$ and $E\subset\{u< u^\star\}$. By Choquet's lemma we can assume that $u=\lim u_k$ where $u_k$ is an increasing sequence of plurisubharmonic functions. Let $(K_i)_{i\in\N}$ be a sequence of compact subsets such that $\Omega=\bigcup K_i$. By  (\ref{countablesubadditivity}) and Lemma \ref{plpbycap} it is enough to show that $cap^\star(A)=0$ for 
$$A=A_{its}=K_i\cap\{u\leq t<s\leq u^\star\},$$
for all $i\in\N,t,s\in\Q$ and $t<s$.

Set $\varepsilon>0$. We can assume that $u_k$ are bounded (if not we put new $u_k$ as $\max \{u_k,t-1\}$ and the set $A$ does not change). By the quasi-continuity we can choose an open subset $U\subset\Omega$ such that $cap(U)<\varepsilon$ and all functions $u_k$ and $u^\star$ are continuous on $\Omega\setminus U$. Then $B=A\setminus U$ is compact. By Lemma \ref{cap0dlazwartych} and because $\varepsilon$ is arbitrary it is enough to show that $cap(B)=0$.

Let $h\in\psh\cap L^\infty(\Omega)$. We will show that
\begin{equation}\label{capnegligibe}
    \int_B(\dc h)^2=0.
\end{equation} 
Since $B$ is compact we can assume that $u_k=u^\star$ outside some compact subset of $\Omega$. We  have
$$\int_B(\dc h)^2\leq(t-s)^{-1}\int_\Omega (u-u_k)(\dc h)^2=(t-s)^{-1}I_k.$$ 
To estimate $I$ we will use integration by parts (in particular we use here Lemma \ref{doStokesa})
$$I_k=\dc(u_k-u)\wedge i\da h\wedge\db h$$ $$-\da(u-u_k)\wedge\da h\wedge\tb\da h-\db(u-u_k)\wedge\ta\db h\wedge\db h
-2(u-u_k)\tb\da h\wedge\ta\db h.$$
To estimate the first term  note that
$$\dc(u_k-u)\wedge i\da h\wedge\db h=\frac{1}{2}\dc(u_k-u)\wedge \dc h^2-h\dc(u_k-u)\wedge i\dc h,$$ 
and by Lemma \ref{niekonieczniepsh} and Corollary \ref{frazypradygdyzbiegaja2} it converges to 0. The last term converges by the Lebesgue monotone convergence theorem. To estimate the second term we can choose a smooth function $h'$ close in $W^{1,2}$ to $h$. By by the Cauchy-Schwarz inequality and Lemma \ref{L2mix} the expression
$$\da(u-u_k)\wedge\da h\wedge\tb\da (h-h')$$
is close to 0 (independently on $k$). Thus the convergence to 0 of the second term is the consequence of the convergence of
$$\da(u-u_k)\wedge\da h\wedge\tb\da h'.$$
In a similar way we get that 
$$-\db(u-u_k)\wedge\ta\db h\wedge\db h\to 0$$
and the Theorem follows.
\end{proof}

The above Theorem and Lemma \ref{plpbycap} give the following
\begin{crl}\label{plpbycap2}
Let $\Omega$ be strictly pseudoconvex and $E\subset\Omega$. Then  $cap^\star(E)=0$ iff $E$ is pluripolar. In particular a countable union of pluripolar sets is pluripolar.
\end{crl}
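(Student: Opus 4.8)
The plan is to read off the equivalence from the two implications already in hand, namely Lemma~\ref{plpbycap} ($cap^\star(E)=0\Rightarrow E$ pluripolar) and Theorem~\ref{negligible} (negligible $\Rightarrow$ pluripolar), together with the elementary remark recorded just before Theorem~\ref{negligible} that pluripolar sets are negligible. So the only real work is to assemble these correctly and to notice that the \emph{proof} of Theorem~\ref{negligible} gives more than its statement.

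First I would dispose of the easy implication: if $cap^\star(E)=0$, then $E$ is pluripolar directly by Lemma~\ref{plpbycap}, which even produces a negative plurisubharmonic function singular on $E$. For the converse I would argue pluripolar $\Rightarrow$ negligible $\Rightarrow cap^\star(E)=0$. The first step is standard: given $u\in\psh(\Omega)$ with $E\subset\{u=-\infty\}$, I may assume $u\le 0$ (locally, since a plurisubharmonic function is bounded above on compact sets, and then pass to $\Omega$ by countable subadditivity (\ref{countablesubadditivity}) over a relatively compact exhaustion) and set $w=\sup_j u/j$. Then $w=0$ off the null set $\{u=-\infty\}$, so $w^\star\equiv 0$ while $w=-\infty$ on $\{u=-\infty\}$; hence $\{u=-\infty\}=\{w<w^\star\}$ is negligible, and so is $E$. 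The second step is where I would lean on the earlier work: although Theorem~\ref{negligible} is phrased as an existence statement for a singular plurisubharmonic function, its proof in fact reduces everything, via (\ref{countablesubadditivity}) and Lemma~\ref{plpbycap}, to showing $cap^\star=0$ on the pieces $A_{its}$, and thereby establishes $cap^\star(E)=0$ for \emph{every} negligible $E$. Invoking this for our $E$ gives $cap^\star(E)=0$ and closes the equivalence.

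Finally the \emph{in particular} clause follows formally: if $E=\bigcup_i E_i$ with each $E_i$ pluripolar, the equivalence just proved gives $cap^\star(E_i)=0$, countable subadditivity (\ref{countablesubadditivity}) gives $cap^\star(E)=0$, and a last application of Lemma~\ref{plpbycap} shows that $E$ is pluripolar.

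I expect the main obstacle not to be a hard estimate, since all the analysis is already contained in Theorem~\ref{negligible} and Lemma~\ref{plpbycap}, but rather the careful bookkeeping: making the passage pluripolar $\Rightarrow$ negligible uniform over $\Omega$ (the reduction to $u\le 0$, and the comparison of $cap^\star$ taken relative to $\Omega$ with that taken relative to a relatively compact subdomain), and making explicit that it is the proof, not merely the statement, of Theorem~\ref{negligible} that yields the vanishing of the outer capacity.
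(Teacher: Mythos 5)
Your proposal is correct and follows essentially the same route as the paper: the paper deduces the corollary precisely from Lemma~\ref{plpbycap} (giving $cap^\star(E)=0\Rightarrow E$ pluripolar) and Theorem~\ref{negligible}, where, as you rightly observe, it is the \emph{proof} of Theorem~\ref{negligible} --- which reduces matters via (\ref{countablesubadditivity}) to showing $cap^\star(A_{its})=0$ and hence yields vanishing outer capacity for all negligible sets --- that supplies the implication pluripolar $\Rightarrow$ negligible $\Rightarrow cap^\star(E)=0$. Your additional bookkeeping (the reduction to $u\le 0$ on a relatively compact exhaustion, matching the paper's ``at least locally'' caveat on negligibility, and the subadditivity argument for countable unions) is exactly what the paper leaves implicit.
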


Another important corollary is the quasi-continuity of not necessarily bounded plurisubharmonic functions.

\begin{thr}\label{Quasi-continuity2}
Let $u\in \psh(\Omega)$, where $\Omega$ is strictly pseudoconvex, and let $\varepsilon>0$. Then, there exists an open set $U\subset\Omega$ with $cap(U,\Omega)<\varepsilon$ and such that $u$ restricted to $\Omega\setminus U$ is continuous.
\end{thr}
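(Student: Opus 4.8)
The plan is to reduce the general (possibly $-\infty$-valued) case to the bounded case already settled in Proposition \ref{Quasi-continuity}, by truncation; the only genuinely new input is a capacity estimate showing that the sublevel sets of $u$ shrink to capacity zero, and this is exactly the point that will carry the weight of the argument.

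\emph{Reduction.} Since $u$ is upper semicontinuous it is bounded above on compact subsets, so after subtracting a constant I may assume $u\le 0$. As continuity is a local property and $cap^\star$ is countably subadditive by (\ref{countablesubadditivity}), I may exhaust $\Omega$ by relatively compact strictly pseudoconvex subdomains $\Omega_m\Subset\Omega$, prove the statement on each $\Omega_m$ with tolerance $\varepsilon 2^{-m}$, and take the union $U=\bigcup_m U_m$ of the resulting open sets. Here $cap(U_m,\Omega)\le cap(U_m,\Omega_m)$ (restricting competitors from $\Omega$ to $\Omega_m$ only enlarges the admissible family), so $cap(U,\Omega)<\varepsilon$; and a point $x\in\Omega\setminus U$ lies in some $\Omega_{m}$, where on a small neighbourhood $\Omega\setminus U\subset\Omega_{m}\setminus U_{m}$, so $u$ stays continuous at $x$ by restriction. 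Thus I may additionally assume that every sublevel set $V_j:=\{u<-j\}$ is relatively compact in $\Omega$.

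\emph{The sublevel sets are small in capacity (the crux).} I claim $cap(V_j,\Omega)\to 0$. Consider the relative extremal functions $u_{V_j}^\star$. Since $V_{j+1}\subset V_j$, the sequence $u_{V_j}^\star$ increases to some $g\le 0$; comparing with the competitor $\max\{u/j,-1\}$, which is $\le -1$ on $V_j$ and tends to $0$ almost everywhere, gives $u_{V_j}^\star\to 0$ almost everywhere, hence $g=0$. Now I mimic the proof of Lemma \ref{cap0dlazwartych}: set $w_j=3u_{V_j}^\star+1$, so $w_j$ increases to the constant $1$ while $w_j=-2<0$ on $V_j$ and $w_j\to 1$ near $\partial\Omega$. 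For any competitor $h\in\psh(\Omega)$ with $-1\le h\le 0$ the comparison principle (Proposition \ref{CP2}), applicable because $\liminf_{z\to\partial\Omega}(w_j-h)\ge 0$, yields $\int_{V_j}(\dc h)^2\le\int_{\{w_j<h\}}\big((\dc h)^2+\tb\da h\wedge\ta\db h\big)\le\int_{\{w_j<h\}}\big((\dc w_j)^2+\tb\da w_j\wedge\ta\db w_j\big)$. By Corollary \ref{w12dlarosnacych} and Theorem \ref{MAdlarosnacych} both masses $\int_\Omega(\dc w_j)^2$ and $\int_\Omega\tb\da w_j\wedge\ta\db w_j$ tend to $0$ (the limit constant $1$ carries no mass), and the bound is independent of $h$; taking the supremum over $h$ gives $cap(V_j,\Omega)\to 0$. (This is the same phenomenon as $cap^\star(\{u=-\infty\})=0$, the pluripolarity of $\{u=-\infty\}$, via Corollary \ref{plpbycap2}.)

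\emph{Truncation and assembly.} Given $\varepsilon>0$, use the claim to pick $N$ with $cap(V_N,\Omega)<\varepsilon/2$, and put $U_0=V_N$, an open set. On $\Omega\setminus U_0$ we have $u\ge -N$, so $u$ coincides there with the bounded plurisubharmonic function $u_N:=\max\{u,-N\}$. Applying Proposition \ref{Quasi-continuity} to $u_N$ produces an open $U_1$ with $cap(U_1,\Omega)<\varepsilon/2$ on whose complement $u_N$ is continuous. Then $U:=U_0\cup U_1$ is open, $cap(U,\Omega)<\varepsilon$, and on $\Omega\setminus U$ we have $u=u_N$, which is continuous, as required. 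The main obstacle is the middle step: establishing $cap(V_j,\Omega)\to 0$, where one must both verify $u_{V_j}^\star\uparrow 0$ and control the capacity through the comparison principle; the relative compactness of the $V_j$ secured in the reduction is exactly what makes the boundary hypothesis of Proposition \ref{CP2} and the vanishing of the extremal masses available.
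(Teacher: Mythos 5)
Your route is genuinely different from the paper's: the paper's proof is three lines long, applying Proposition \ref{Quasi-continuity} to the locally bounded plurisubharmonic function $e^u$ and then deleting a small-capacity open neighbourhood of the pluripolar set $\{u=-\infty\}$ supplied by Corollary \ref{plpbycap2}, whereas you truncate $u$ at level $-N$ and try to show that the sublevel sets themselves decay in capacity. Your crux computation (an adaptation of Lemma \ref{cap0dlazwartych} combined with the competitor $\max\{u/j,-1\}$) and your truncation-and-assembly step are correct \emph{under the hypotheses you impose}; the gap is in the reduction that is supposed to provide those hypotheses.

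The step ``Thus I may additionally assume that every sublevel set $V_j=\{u<-j\}$ is relatively compact in $\Omega$'' does not follow from restricting to an exhaustion, and cannot be arranged that way: restricting $u$ to $\Omega_m\Subset\Omega$ does nothing to its sublevel sets, which may accumulate on $\partial\Omega_m$ (this happens for every subdomain as soon as the polar set of $u$ is dense, and already for $u$ with a single pole on $\partial\Omega_m$). Worse, the statement your truncation actually needs, $cap(\{u<-N\},\Omega)\to 0$ without relative compactness, is false: take $J=J_{st}$, $\Omega=\mathbb{D}\times\mathbb{D}$ and $u=\log|z_1|$. Then $\{u<-j\}$ contains the compact polydiscs $\{|z_1|\le e^{-j-1}\}\times\{|z_2|\le 1-\epsilon\}$, whose relative extremal function is $\max\{\log|z_1|/(j+1),\,\log|z_2|/\log\frac{1}{1-\epsilon},\,-1\}$ with total Monge--Amp\`ere mass comparable to $\bigl((j+1)\log\frac{1}{1-\epsilon}\bigr)^{-1}$; letting $\epsilon\to0$ gives $cap(\{u<-j\},\Omega)=+\infty$ for \emph{every} $j$, and the same computation on each $\Omega_m=(1-1/m)\mathbb{D}\times(1-1/m)\mathbb{D}$ shows that no member of your exhaustion satisfies the assumption you reduce to. The relative compactness is, as you yourself note, load-bearing: it is what gives the barrier $u_{V_j}^\star\geq A\rho$ (hence the boundary hypothesis of Proposition \ref{CP2} for $w_j=3u_{V_j}^\star+1$) and what keeps $V=\{w_1<0\}$ compactly inside the domain so that Theorem \ref{MAdlarosnacych} and Corollary \ref{w12dlarosnacych} kill the masses. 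The repair is a two-domain formulation rather than a WLOG: for each $m$ prove $cap(\{u<-j\}\cap\Omega_m,\,\Omega_{m+1})\to0$ by running your extremal-function argument inside $\Omega_{m+1}$, where $\{u<-j\}\cap\Omega_m\Subset\Omega_{m+1}$ automatically and where $u$ can be normalized to be $\le 0$ (it is bounded above on $\bar\Omega_{m+1}$, which is also the correct fix for your opening normalization); then truncate on $\Omega_m$, use $cap(\cdot,\Omega)\le cap(\cdot,\Omega_{m+1})$, and assemble over $m$ as you do. With that restructuring your argument works; note that the paper's use of $e^u$ is precisely the device that avoids ever needing a lower bound on $u$ off the exceptional set, which is why it can bypass this entire difficulty.
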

\begin{proof}
By proposition \ref{Quasi-continuity} a function $e^u$ is quasicontinuous and thus there is an open set $U_1$ such that $cap(U_1,\Omega)<\varepsilon/2$ and  $e^u$ restricted to $\Omega\setminus U_1$ is continuous. By Corollary \ref{plpbycap2} there is an another open set $U_2$ for which $cap(U_2,\Omega)<\varepsilon/2$ and $\{u=-\infty\}\subset U_2$. The union $U=U_1\cup U_2$ is the set we are looking for.
\end{proof}

Using Corollary \ref{plpbycap2}, in the same way as Proposition \ref{globalplp} we can prove the following Josefson type Theorem.

\begin{thr}\label{globalplp2}
Let $M$ be an almost Stein manifold and let $E\subset M$ be pluripolar set. Then there is a plurisubharmonic function $u$ on $\Omega$ such that $u|_E=-\infty$.
\end{thr}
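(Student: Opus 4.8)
The plan is to follow the scheme of Proposition \ref{globalplp} almost verbatim; the only new ingredient is that the $F_\sigma$ hypothesis used there can now be dropped, thanks to Corollary \ref{plpbycap2}. Concretely, I would first fix a smooth strictly plurisubharmonic exhaustion $\rho$ of $M$, which exists because $M$ is almost Stein, and, by Sard's theorem, choose a sequence $(a_k)\subset\R$ with $a_{k+1}\ge a_k+1$ such that every connected component of $\Omega_k=\{z\in M:\rho(z)<a_k\}$ is strictly pseudoconvex.

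Second, for each $k$ I would produce a bounded local barrier on $\Omega_k$. Since $E$ is pluripolar, so is $E\cap\Omega_k$, and hence by Corollary \ref{plpbycap2} (applied on each strictly pseudoconvex component of $\Omega_k$) we have $cap^\star(E\cap\Omega_k)=0$. Lemma \ref{plpbycap} then furnishes a negative plurisubharmonic function $w_k$ on $\Omega_k$ with $E\cap\Omega_k\subset\{w_k=-\infty\}$. Setting $u_k^{(N)}=\max\{w_k/N,-1\}$ gives plurisubharmonic functions with $-1\le u_k^{(N)}\le 0$ that equal $-1$ on $E\cap\Omega_k$ and increase to $0$ almost everywhere as $N\to\infty$; by the monotone convergence theorem I can pick $N$ so large that the resulting function $u_k$ satisfies $\|u_k\|_{L^1(\Omega_k)}<2^{-k}$.

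Third, I would glue exactly as in Proposition \ref{globalplp}. Put
$$v_k=\begin{cases}\max\{\rho-a_{k+1},u_{k+2}\}&\hbox{on }\Omega_{k+2},\\ \rho-a_{k+1}&\hbox{on }M\setminus\Omega_{k+2},\end{cases}$$
and $u=\sum_k v_k$. Since $v_k=u_{k+2}$ on $\Omega_k$ and the norms $\|u_{k+2}\|_{L^1(\Omega_k)}$ are summable, the series converges locally in $L^1$ to a plurisubharmonic function, and $u|_E=-\infty$ because each $u_{k+2}=-1$ on $E\cap\Omega_k$ while the other summands are locally bounded there.

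The point to stress is that there is essentially no new obstacle to overcome: the entire difficulty of upgrading from the $F_\sigma$ case (Proposition \ref{globalplp}) to an arbitrary pluripolar set is absorbed into Corollary \ref{plpbycap2}, whose proof in turn rests on the quasi-continuity (Theorem \ref{Quasi-continuity2}) together with Theorem \ref{negligible}. Once the equivalence ``pluripolar $\Leftrightarrow cap^\star=0$'' and the production of a local $-\infty$-barrier (Lemma \ref{plpbycap}) are available on each strictly pseudoconvex piece $\Omega_k$, the exhaustion-and-gluing argument is identical to the $F_\sigma$ case and requires no further care.
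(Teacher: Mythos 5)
Your proposal is correct and takes essentially the same route as the paper, whose entire proof is the remark ``using Corollary \ref{plpbycap2}, argue as in Proposition \ref{globalplp}'': you reduce to $cap^\star(E\cap\Omega_k)=0$ via Corollary \ref{plpbycap2} and then run the identical exhaustion-and-gluing construction. Your way of producing the bounded local barriers $u_k$ --- invoking Lemma \ref{plpbycap} to get $w_k$ with $E\cap\Omega_k\subset\{w_k=-\infty\}$ and truncating via $\max\{w_k/N,-1\}$ --- is a clean and valid implementation of the step the paper leaves implicit (``like in the proof of Proposition \ref{plp}''), and it conveniently sidesteps any boundary-value issues for relative extremal functions of non-relatively-compact sets.
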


\section{Open Problems}

The most important problem here is to built the pluripotential theory in higher dimension. However the theory on surfaces is not complete too. Let us give here three interesting open questions.

{\bf Question 1.} Does the domination principle hold for the Monge-Amp\`{e}re operator on almost complex surfaces? More precisely:  assume that for $u,v\in\psh\cap L^\infty(\Omega)$ we have  $(\dc u)^2\geq(\dc v)^2$ on $\Omega$. Does $u\leq v$ outside compact subset of $\Omega$ imply that $u\leq v$ in $\Omega$? The answer is unknown even in the case of continuous functions (see \cite{p3} for some partial result in this direction).

{\bf Question 2.} Let $\Omega$ be strictly pseudoconvex and $E\Subset\Omega$. Is there any upper estimate of $cap^\star(E)$ in terms of $\int_\Omega(\dc u^\star_E)^2$?

{\bf Question 3.} Let $u\in\psh$ and $v\in\psh\cap L^\infty$. Is the current  $u(\dc v)^2$ well defined?

$\newline$
{\bf Acknowledgments}
The authors would like to thank S\l awomir Dinew and Grzegorz Kapustka for the useful conversations.

\end{document}